\documentclass[reqno]{amsart}
\usepackage{amscd,amssymb}
\usepackage[mathcal]{euscript}
\usepackage{graphicx}
\usepackage{hyperref}
\usepackage[arrow, matrix, curve]{xy}
\numberwithin{equation}{section}

\newtheorem{Thm}[equation]{Theorem}
\newtheorem{Lemma}[equation]{Lemma}
\newtheorem{Prop}[equation]{Proposition}

\newtheorem{Cor}[equation]{Corollary}
\theoremstyle{definition}
\newtheorem{Def}[equation]{Definition}
\newtheorem{Rmk}[equation]{Remark}
\newtheorem{ex}[equation]{Example}


\newcommand{\R}{\mathbb{R}}

\newcommand{\Z}{\mathbb{Z}}
\newcommand{\C}{\mathbb{C}}
\newcommand{\A}{\mathbb{A}}

\newcommand{\jet}{\mathbb{J}}

\newcommand{\Hom}{\mathrm{Hom}}
\newcommand{\id}{\mathrm{id}}
\newcommand{\SMan}{\mathsf{SM}}
\newcommand{\Man}{\mathsf{M}}

\newcommand{\Aut}{\mathrm{Aut}}
\newcommand{\uAut}{\underline{\mathrm{Aut}}}
\newcommand{\Spin}{\mathrm{Spin}}

\newcommand{\Hor}{\mathcal{H}}
\newcommand{\cHor}{\mathcal{CH}}
\newcommand{\uHom}{\underline{\mathrm{Hom}}}
\DeclareMathOperator{\im}{\text{im}}

\begin{document}
\author{Dominik Ostermayr}
\address{Institut f\"ur theoretische Physik, Universit\"at zu K\"oln, Z\"ulpicher Str. 77, 50937 K\"oln, Germany}
\email{dosterma@math.uni-koeln.de}
\title{Automorphism supergroups of supermanifolds}

\begin{abstract}
A classical theorem states that the group of automorphisms of a manifold $M$
preserving a $G$-structure of finite type is a Lie group.
We generalize this statement to the category of $cs$ manifolds and give some examples,
some of which being generalizations of classical notions, others
being particular to the super case.
Notably, we have to introduce a new notion of supermanifolds
which we call mixed supermanifolds.
\end{abstract}
\maketitle

\tableofcontents

\section{Introduction}
In this article, we study geometric structures on $cs$ manifolds
and their automorphisms.
Super-Riemannian structures on $cs$ manifolds play a prominent role in the work of
Zirnbauer \cite{Zirnbauer}.
In particular, the so-called Riemannian symmetric superspaces are worth mentioning.
Other instances of geometric structures on supermanifolds appeared in the context of supergravity theories \cite{Lott}.

By a geometric structure on a manifold $M$ we mean a reduction of the structure
group of the frame bundle $L(M)$ to some closed subgroup $G\leqslant GL(V).$
Depending on the context, there might be additional conditions like $1$-flatness.
A classical theorem states that the group of automorphisms of such a $G$-structure
is a Lie group provided it is of finite type. (See \cite{Kobayashi} and the references therein.)
This includes for instance the isometry group of a Riemannian manifold.

In this work, we study the analogous structures in the category of $cs$ manifolds (cf. \cite{Deligne Morgan}).
First, we lay the necessary foundations for the definition of a $G$-structure.
This leads naturally to the notion of mixed supermanifolds as follows.
The frame bundle of an ordinary manifold locally modelled on the vector space $V$ is obtained from a cocycle $U_{ij}\rightarrow GL(V)$ by glueing.
Suppose $M$ is a $cs$ manifold (called supermanifolds in this article)
which is locally modelled on the super vector space $V_{\bar{0}}\oplus V_{\bar{1}}.$
Here, $V_{\bar{0}}$ is a real and $V_{\bar{1}}$ is a complex vector space.
Then the analogous cocycle takes values in the mixed Lie supergroup $GL(V)$ which has as body
the mixed manifold $GL(V_{\bar{0}})\times GL(V_{\bar{1}}).$
It is crucial to keep the complex analytic structure on the second factor.
After having developed the basic theory of mixed supermanifolds,
one can define $G$-structures, prolongations and $G$-structures of finite type
along the lines of the classical definitions.
Our main result concerns the functor of automorphisms of a $G$-structure of finite type that
is in addition admissible.
In this situation, if restricted to purely even supermanifolds, the functor is representable by a
mixed Lie group and it is finite dimensional in the sense that the higher
points are determined by the Lie superalgebra of infinitesimal automorphisms of the $G$-structure, which
we prove to be finite dimensional.
Representability can fail for two reasons here,
due to the fact that the higher points of the functor of automorphisms contain all infinitesimal automorphisms of the $G$-structure.
For a representable functor these are necessarily all complete and decomposable,
which means that they admit a decomposition of the form $X + i Y$ for two real complete vector fields.
The theory of $G$-structures can be developed for real supermanifolds
without need for enlarging the category.
Moreover, there is no need for imposing an additional
property on a $G$-structure of finite type.
The only obstruction for the representability of the functor of automorphisms
of finite type is the completeness of the infinitesimal automorphisms.

The paper is organized as follows.
In Section $2$ we introduce mixed supermanifolds.
After giving the basic definitions, we give a short account on mixed Lie supergroups
and principal bundles.
We then show that mixed supermanifolds are the natural home for constructions
such as tangent bundles and frame bundles as well as their mixed forms,
the real tangent bundles and real frame bundles.
In contrast to what the name suggests, mixed supermanifolds are not supermanifolds with
extra structure as we show in Proposition \ref{prop: non-existence of forgetful functor}.
Moreover, we prove that, for our purposes, mixed supermanifolds cannot be avoided
(Proposition \ref{prop: non-existence of functorial lift from complex functions to two real functions}).

In Section $3$ we define a geometric structure to be a reduction
of the real frame bundle of a mixed supermanifold and construct its prolongation.
In the super context it is advisable to make the constructions in such a way that
functoriality is evident. A subtlety is that the standard prolongation
has to be refined to a real prolongation, which is again a geometric
structure in the sense of our definition.
The existence is ensured if the $G$-structure is admissible.
          
In Section $4$ we define the functor of automorphisms of a $G$-structure.
Due to functoriality, prolongation gives rise to inclusions of functors
of automorphisms.
Then we treat the case of a $\{1\}$-structure.
We show that the underlying functor is representable and
the Lie superalgebra of infinitesimal automorphisms is finite dimensional.
An important ingredient is that even real vector
fields possess a flow as we show in Section \ref{subsec: flows of even real vector fields}.
Similar results on the functor of automorphisms of an admissible $G$-structure of finite type
can then be deduced by embedding it into the functor of automorphisms of a $\{1\}$-structure.

Everything we have said has a direct analogue in the category of real supermanifolds,
except that there are no complications caused by mixed structures
and admissibility. The completeness issues remain.
The analogous theorems are stated in Section $5.$

Finally, in Section $6$ we discuss some examples. We treat even and odd metric structures on supermanifolds
and construct a canonical admissible geometric structure of finite type associated
to the superization of a Riemannian spin manifold as studied in \cite{Cortes et al, Lott}.

\addtocontents{toc}{\protect\setcounter{tocdepth}{1}}
\subsection*{Acknowledgements}
This research was funded by SFB/TR 12 ``Symmetries and Universality in Mesoscopic Systems''.
I would like to thank Alexander Alldridge for helpful comments on earlier drafts of this paper.
\addtocontents{tox}{\protect\setcounter{tocdepth}{2}}

\section{Recollections on supergeometry}
\label{sec: Recollections on supergeometry}
\subsection{Mixed supermanifolds}
A \emph{complex super vector space} 
is a $\Z/2$-graded complex
vector space
$V = V_{\bar{0}}\oplus V_{\bar{1}}.$
A morphism is a grading preserving complex linear homomorphism.
The resulting category is closed symmetric monoidal
with respect to the evident notion of tensor product and inner hom objects.

A \emph{general mixed super vector space} consists of the data $(V, V_\R, V_\C)$
where $V$ is a complex super vector space, $V_\R\subseteq V$ is a real sub super vector space,
$V_\C\subseteq V$ is a complex sub super vector such that
$V_\C\subseteq V_\R$ and the canonical map $\C\otimes V_\R/V_\C \rightarrow V/V_\C$
is an isomorphism.
A \emph{mixed super vector space} is a general mixed super vector space $(V, V_\R, V_\C)$
such that $(V_\R)_{\bar{1}} = (V_\C)_{\bar{1}} = V_{\bar{1}}.$
The class of these contains the classes of \emph{super vector spaces} and \emph{complex super vector spaces}
as the extreme cases where $V_{\C} = V_{\bar{1}}$ and $V_{\C} = V$, respectively.
A \emph{real super vector space}
is a general mixed super vector space of the form $(V, V_\R, 0).$
For our purposes it is not necessary to discuss the various
notions of morphisms of general mixed super vector spaces at this point.

\begin{ex}
 One way to produce (general) mixed super vector spaces is the following.
 Suppose given a real sub super vector space $V_\R$ of a complex super vector space $W.$
 The kernel of the induced map $f\colon \C\otimes V_\R\rightarrow W$ is of the form
 $\tilde{V}_\C = \{i\otimes v - 1\otimes iv\mid v\in V_\C\}\cong \overline{V_\C}$ for a complex subspace
 $V_\C\subseteq W$ contained in $V_\R.$
 Then $(V = \im(f), V_\R, V_\C)$ is a general mixed super vector space.
 Of course, conversely, given a general mixed super vector space $(V, V_\R, V_\C)$,
 $V_\C$ can be recovered from this by applying this procedure to
 $V_\R\rightarrow V.$
 In particular, the pair $(V, V_\R)$ determines $V_\C$
and the pair $(V_\R, V_\C)$ determines $V$
up to isomorphism.
\end{ex}

This leads to various notions of supermanifolds.
We will first introduce the relevant notions at the level
of manifolds (without grading).
Consider a (purely even) mixed vector space $V_\C\subseteq V_\R\subseteq V.$
We denote by $\A(V_\R)$ the locally ringed space over $\C$ given by the topological space
$V_\R$ together with the
sheaf $\mathcal{O}_{V_\R}$ of partially holomorphic functions, i.e.
complex valued smooth functions whose differential is complex linear in the fibre $\A(V_\R)\times V_\C\subseteq \A(V_\R) \times V_\R = T\A(V_\R).$

\begin{Rmk}
More concretely, if we choose an isomorphism $V \cong \C^{n}\times \C^{m}$
such that $V_\R \cong \R^{n}\times \C^{m}$ and
$V_\C  \cong \C^{m}$, then these are complex smooth functions $\psi(x, z)$ on
open subspaces of $\R^{n}\times \C^{m}$
which are holomorphic in $z.$
\end{Rmk}

\begin{Def}
A \emph{mixed manifold} consists
of a locally ringed space $(M_0, \mathcal{O}_{M_0})$ over $\C$
with second countable Hausdorff base which
is locally isomorphic to $\A(V_\R)$ for some mixed vector space $(V, V_\R, V_\C).$
The subsheaf of real-valued functions is denoted by $\mathcal{O}_{M_0, \R}.$
The full subcategory of locally ringed spaces over $\C$ with objects mixed manifolds
is denoted by $\Man^\mu.$
\end{Def}

\begin{Rmk}
These are precisely the smooth manifolds
locally of the form $\R^{n}\times \C^{m}$ with transition functions
$(x, z)\mapsto (\varphi(x), \psi(x, z))$, where
$\psi(x, z)$ is holomorphic in $z.$
Put differently, these are manifolds endowed with a Levi flat CR-structure (cf. \cite{Eastwood}).
\end{Rmk}

Consider now a mixed super vector space $(V, V_\R, V_\C).$
We denote by $\A(V_\R)$ the locally ringed superspace over $\C$ given by
the topological space ${V_{\R}}_{\bar{0}}$ together with the structure sheaf
$\mathcal{O}_{\A({V_{\R}}_{\bar{0}})}\otimes_\C \bigwedge V_{\bar{1}}^*.$
Given a mixed super vector space $(V, V_\R, V_\C)$, we can forget the
mixed structure and consider the mixed super vector space $(V, V, V).$
The associated locally ringed space will be denoted by $\A(V).$

\begin{Def}
A \emph{mixed supermanifold} consists of
a locally ringed superspace $M = (M_0, \mathcal{O}_M)$ over $\C$ with
second countable Hausdorff base
which is locally isomorphic to
$\A(V_\R)$ for some mixed super vector space $(V, V_\R, V_\C).$
The full subcategory of locally ringed superspaces
over $\C$ with objects mixed supermanifolds
is denoted by $\SMan^\mu.$
The category $\SMan^\mu$ contains the full subcategories $\SMan$
and $\SMan^\C$ of \emph{supermanifolds} and
\emph{complex supermanifolds} as the extreme cases where ${V_{\C}} = V_{\bar{1}}$ and
$V_\C = V$, respectively.
\end{Def}

The sheaf of nilpotent functions on a mixed (real) supermanifold $M$
will be denoted by $\mathcal{J}_M.$ The mixed (real) supermanifold
structure on $M$ induces the structure of a mixed (real) manifold on the locally ringed space
$(M_0, \mathcal{O}_{M}/\mathcal{J}_M)$
which we abbreviate by abuse of notation by $M_0.$
Moreover, we set $\mathcal{O}_{M_0} := \mathcal{O}_M/\mathcal{J}_M.$
Then the inclusion $i\colon \Man^\mu \rightarrow \SMan^\mu$
has the right adjoint $r\colon \SMan^\mu\rightarrow \Man^\mu$, $M\mapsto M_0.$

Given a mixed supermanifold, we define the sheaf of \emph{real functions}
to be the pullback in the square of (real) supercommutative superalgebras
\[
\xymatrix{
 \mathcal{O}_{M, \R}  \ar[r]\ar[d] & \mathcal{O}_{M_0, \R} \ar[d]\\
 \mathcal{O}_M \ar[r] & \mathcal{O}_{M_0}.
}
\]

We will often consider a mixed
supermanifold as a set-valued functor
on $\SMan^\mu$
by the assignment $T\mapsto \SMan^\mu(T, M).$
Then there is a natural transformation of functors $M\rightarrow r^*i^* M = r^*M_0$
which is given by sending a map $T\rightarrow M$ to its underlying map
$T_0\rightarrow M_0.$ The second part of the next lemma is only the first encounter
of the typical reality condition enforced by a mixed structure.

\begin{Lemma}
\label{lem: reality condition}
Consider a mixed super vector space $(V, V_\R, V_\C).$
\begin{itemize}

 \item[(a)] There is a natural isomorphism $\SMan^\mu(M, \A(V)) = \Gamma(\mathcal{O}_M\otimes_\C V_\C)_{\bar{0}}.$

 \item[(b)] The following diagram is a pullback of functors on $\SMan^{\mu}$:
\[
 \xymatrix{
\A(V_\R)\ar[r]\ar[d]\ar[d] & r^*\A({V_{\R}}_{\bar{0}})\ar[d]\\
\A(V)\ar[r] & r^*\A(V_{\bar{0}}).
}
\]

\item[(c)] In other words, we have
\[
\SMan^\mu(M, \A(V_\R)) \cong \Gamma(\mathcal{O}_{M, \R, \bar{0}}\otimes_\R ({V_{\R}/V_\C})_{\bar{0}}) \oplus \Gamma(\mathcal{O}_{M, \bar{0}}\otimes_\C (V_\C)_{\bar{0}})\oplus \Gamma(\mathcal{O}_{M, \bar{1}}\otimes_\C V_{\bar{1}}).
\]
\end{itemize}
\end{Lemma}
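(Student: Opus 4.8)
The plan is to establish (a) first, then deduce (b) by a pullback-square computation, and read off (c) as a translation of (b) into sections via (a).

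For part (a): $\A(V)$ is the locally ringed superspace with structure sheaf $\mathcal{O}_{\A(V_{\bar 0})}\otimes_\C\bigwedge V_{\bar 1}^*$, and since we have forgotten the mixed structure, $\mathcal{O}_{\A(V_{\bar 0})}$ is the sheaf of all complex-valued smooth functions. First I would recall that a morphism $M\to\A(V)$ in $\SMan^\mu$ is determined by a morphism of sheaves of supercommutative $\C$-superalgebras $\mathcal{O}_{\A(V)}\to\mathcal{O}_M$, hence (by the standard argument for maps into a linear superspace, e.g. choosing linear coordinates on $V_{\bar 0}$ and a basis of $V_{\bar 1}^*$) by the images of the coordinate functions, i.e. by an even element of $\Gamma(\mathcal{O}_M)\otimes_\C V_{\bar 0}\oplus\Gamma(\mathcal{O}_M)\otimes_\C V_{\bar 1}=\Gamma(\mathcal{O}_M\otimes_\C V)_{\bar 0}$. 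Since $\A(V)$ corresponds to $(V,V,V)$, one has $V_\C=V$ here, so this is exactly $\Gamma(\mathcal{O}_M\otimes_\C V_\C)_{\bar 0}$; naturality in $M$ is immediate from functoriality of pullback of functions. (I would note this recovers the familiar fact that $\A(V)$ represents $T\mapsto\Gamma(\mathcal{O}_T\otimes_\C V)_{\bar 0}$.)

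For part (b): I have the canonical maps $\A(V_\R)\to\A(V)$ (forgetting the mixed structure, i.e. including partially holomorphic functions into all smooth functions) and $\A(V_\R)\to r^*\A({V_\R}_{\bar 0})$ (the natural transformation $N\to r^*N_0$ applied to $N=\A(V_\R)$), and similarly for $\A(V)$; these assemble into a commuting square. To see it is a pullback of functors on $\SMan^\mu$, fix $M$ and compare sections. A map $M\to\A(V)$ is an even element $s\in\Gamma(\mathcal{O}_M\otimes_\C V_\C)_{\bar 0}=\Gamma(\mathcal{O}_M\otimes_\C V)_{\bar 0}$ by (a); a map $M_0\to\A({V_\R}_{\bar 0})$ is a partially holomorphic $V_\R$-valued function on $M_0$. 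The fibre product consists of those $s$ whose underlying map $M_0\to\A(V_{\bar 0})$ factors through $\A({V_\R}_{\bar 0})$ with the factorization being a morphism of mixed manifolds — i.e. $s$ must be ${V_\R}$-valued on $M_0$ and partially holomorphic there. I would then verify that a section $s$ of $\mathcal{O}_M\otimes_\C V$ satisfying exactly this condition on the reduced space is the same thing as a morphism $M\to\A(V_\R)$: locally on $M$ write $s=s_{\mathrm{even}}+\text{(nilpotent part)}$ with $s_{\mathrm{even}}$ a section of $\mathcal{O}_{M_0}\otimes_\C V$, the nilpotent part a section of $\mathcal{J}_M\otimes_\C V$; the reduced condition forces $s_{\mathrm{even}}$ partially holomorphic and $V_\R$-valued, and the definition of $\mathcal{O}_{\A(V_\R)}$ as $\mathcal{O}_{\A({V_\R}_{\bar 0})}\otimes_\C\bigwedge V_{\bar 1}^*$ shows that this plus an unconstrained nilpotent part is precisely the data of a map to $\A(V_\R)$. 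I expect this identification — matching "partially holomorphic + $V_\R$-valued on the reduced space, anything on nilpotents" against the definition of the structure sheaf of $\A(V_\R)$ — to be the main obstacle, since it is where the mixed structure genuinely enters and where one must be careful that the holomorphy condition only constrains the even reduced part.

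For part (c): this is bookkeeping once (b) is in hand. The pullback square expresses $\SMan^\mu(M,\A(V_\R))$ as the fibre product of $\Gamma(\mathcal{O}_M\otimes_\C V)_{\bar 0}$ and $\mathrm{Hom}_{\Man^\mu}(M_0,\A({V_\R}_{\bar 0}))$ over $\mathrm{Hom}_{\Man}(M_0,\A(V_{\bar 0}))=\Gamma(\mathcal{O}_{M_0}\otimes_\C V_{\bar 0})$. I would split $V_{\bar 0}$ using the mixed structure: the reality datum on the even reduced part contributes $\Gamma(\mathcal{O}_{M,\R,\bar 0}\otimes_\R(V_\R/V_\C)_{\bar 0})$ (the genuinely real, non-holomorphic directions, where the pullback with $\mathcal{O}_{M_0,\R}$ cuts $\mathcal{O}_M$ down to $\mathcal{O}_{M,\R}$) together with $\Gamma(\mathcal{O}_{M,\bar 0}\otimes_\C(V_\C)_{\bar 0})$ (the holomorphic even directions, unconstrained on nilpotents because ${(V_\C)}_{\bar 1}=V_{\bar 1}$ is handled separately), while the odd part $V_{\bar 1}$ — on which $V_\R$ and $V_\C$ impose nothing since ${(V_\R)}_{\bar 1}={(V_\C)}_{\bar 1}=V_{\bar 1}$ — contributes $\Gamma(\mathcal{O}_{M,\bar 1}\otimes_\C V_{\bar 1})$ freely. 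Assembling these three summands gives the stated direct sum decomposition, completing the proof.
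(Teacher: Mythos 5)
Your overall route --- establish the chart-type description in (a), deduce (b) by comparing $T$-points, and read off (c) by bookkeeping --- is exactly the argument the paper intends (its proof is just a reference to the chart theorem, \cite[Theorem 4.1.11]{CCF}). However, there is one genuine error at the start: you identify $\mathcal{O}_{\A(V_{\bar 0})}$, for the space $\A(V)$ obtained by forgetting the mixed structure, with the sheaf of \emph{all} complex-valued smooth functions. By the paper's convention, forgetting the mixed structure means passing to the mixed super vector space $(V,V,V)$, so $\A(V)$ is the \emph{complex-analytic} affine superspace: its structure sheaf is the sheaf of holomorphic functions on $V_{\bar 0}$ tensored with $\bigwedge V_{\bar 1}^*$. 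This is not cosmetic: part (a) holds precisely because the target sheaf is holomorphic. For an arbitrary even section $s\in\Gamma(\mathcal{O}_M\otimes_\C V)_{\bar 0}$ the pullback of a holomorphic function $h$ is defined by Taylor expansion in the nilpotent part, and $\partial^\alpha h$ composed with the reduction $s_0\in\Gamma(\mathcal{O}_{M_0}\otimes_\C V_{\bar 0})$ is again partially holomorphic; so every $s$ determines a morphism. If the target sheaf really were all smooth functions, this would fail: $\phi(s_0,\bar{s}_0)$ is in general not partially holomorphic for smooth $\phi$, and the functor of points of that ``smooth'' model on mixed test objects is \emph{not} $\Gamma(\mathcal{O}_M\otimes_\C V)_{\bar 0}$, so your ``determined freely by the images of the coordinates'' step would break. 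The same slip recurs in (b): the canonical map $\A(V_\R)\rightarrow\A(V)$ is not an inclusion of partially holomorphic functions into smooth ones, but the restriction of holomorphic superfunctions along $(V_\R)_{\bar 0}\subseteq V_{\bar 0}$, landing in the partially holomorphic ones.

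Once $\A(V)$ is correctly identified, the remainder of your outline is sound and coincides with the intended proof. In (a) one gets $\SMan^\mu(M,\A(V))=\Gamma(\mathcal{O}_M\otimes_\C V)_{\bar 0}$ (reading $V_\C=V$ for $(V,V,V)$, as you do). For (b) the substantive point is the chart theorem for $\A(V_\R)$ itself: coordinate pullbacks $(u,v,\eta)$, with the reduction of the $V_\R/V_\C$-components $u$ real-valued, determine a morphism, because a function $\psi(x,z)$ that is smooth in $x$ and holomorphic in $z$, composed with the reductions $(u_0,v_0)$ (the reality of $u_0$ being needed for the composition to be defined at all), is again partially holomorphic since $\psi$ has no $\bar z$-derivative and $u_0$, $v_0$ are sections of $\mathcal{O}_{M_0}$; the nilpotent parts are then indeed unconstrained, as you say, and no reality or holomorphy condition is imposed beyond the reduced level. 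Part (c) is then the bookkeeping you describe, using a choice of complement of $(V_\C)_{\bar 0}$ in $(V_\R)_{\bar 0}$ to write the answer as a direct sum.
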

\begin{proof}
The proof is similar as in \cite[Theorem 4.1.11]{CCF}.
\end{proof}
\begin{Cor}
The category $\SMan^\mu$ admits all finite products
and the full subcategory $\Man^\mu$ is closed under finite products in $\SMan^\mu.$
\end{Cor}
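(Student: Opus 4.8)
The plan is to construct binary products and a terminal object explicitly, reducing everything to the local models $\A(V_\R)$, where Lemma \ref{lem: reality condition} does the work; since a category possessing a terminal object and binary products has all finite products, this will suffice. The terminal object will be $\A(0)$, associated to the zero mixed super vector space: by part (c) of Lemma \ref{lem: reality condition} each of the three summands describing $\SMan^\mu(T, \A(0))$ vanishes, so that functor is constant with a one-element value, and $\A(0)$ lies in $\Man^\mu$.

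For products of local models I would proceed as follows. Given mixed super vector spaces $(V, V_\R, V_\C)$ and $(W, W_\R, W_\C)$, one first checks that $(V \oplus W,\ V_\R \oplus W_\R,\ V_\C \oplus W_\C)$ is again a mixed super vector space — the containments, the isomorphism condition on $\C \otimes (V_\R \oplus W_\R)/(V_\C \oplus W_\C) \to (V \oplus W)/(V_\C \oplus W_\C)$, and the coincidences of odd parts are all direct sums of the corresponding data for $V$ and for $W$. The linear projections $V \oplus W \to V$ and $V \oplus W \to W$ preserve the gradings and the real and complex substructures, hence induce morphisms $\A((V \oplus W)_\R) \to \A(V_\R)$ and $\A((V \oplus W)_\R) \to \A(W_\R)$. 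Since the description in part (c) of Lemma \ref{lem: reality condition} is additive in the mixed super vector space via $(V,V_\R,V_\C)\mapsto\big((V_\R/V_\C)_{\bar 0},\,(V_\C)_{\bar 0},\,V_{\bar 1}\big)$, tracing through its naturality will show that composition with the two projections is a bijection
\[
\SMan^\mu\big(T,\ \A((V \oplus W)_\R)\big) \longrightarrow \SMan^\mu\big(T,\ \A(V_\R)\big) \times \SMan^\mu\big(T,\ \A(W_\R)\big),
\]
natural in $T \in \SMan^\mu$, so that $\A((V \oplus W)_\R)$ with these projections is the product of $\A(V_\R)$ and $\A(W_\R)$ in $\SMan^\mu$.

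In general, for $M, N \in \SMan^\mu$ I would pick atlases $\{U_i \cong \A((V^i)_\R)\}$ and $\{N_j \cong \A((W^j)_\R)\}$ and glue the local products $U_i \times N_j \cong \A((V^i \oplus W^j)_\R)$: the transition isomorphisms of $M$ and of $N$, paired through products of charts, give transition isomorphisms of the pieces, and the cocycle condition follows from those for $M$ and $N$. This produces a locally ringed superspace $M \times N$ over $\C$ with underlying space $M_0 \times N_0$, which is second countable Hausdorff and locally of the form $\A((V^i \oplus W^j)_\R)$, hence an object of $\SMan^\mu$, together with projections $p_M, p_N$ glued from the chartwise ones. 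The universal property will be checked locally on the source: a morphism $T \to M \times N$ is glued from morphisms of open pieces of $T$ into the charts $U_i \times N_j$, for which the previous step gives a unique factorization of a given pair $(T \to M,\ T \to N)$, and the local factorizations agree on overlaps by uniqueness. Finally, if $M, N \in \Man^\mu$ then every $V^i_{\bar 1}$ and $W^j_{\bar 1}$ vanishes, hence so does every $(V^i \oplus W^j)_{\bar 1}$, so $M \times N \in \Man^\mu$; since $\Man^\mu \subseteq \SMan^\mu$ is full, $M \times N$ and $\A(0)$ are also the finite products computed in $\Man^\mu$, which gives the asserted closure of $\Man^\mu$ under finite products.

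I expect the main obstacle to be the compatibility bookkeeping in the gluing step — that the chartwise projections and chartwise factorizations glue consistently — which reduces to the \emph{naturality} in Lemma \ref{lem: reality condition}(c): the displayed bijection must be induced by honest projection morphisms, not merely an abstract isomorphism of sets. The one genuinely new point over the classical construction is that a direct sum of the ``reality'' pullback squares of Lemma \ref{lem: reality condition}(b) is again such a square, so the mixed structure poses no obstruction to forming products.
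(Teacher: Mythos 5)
Your proof is correct and follows essentially the route the paper intends: the corollary is stated as an immediate consequence of Lemma \ref{lem: reality condition}, namely representing morphisms into the affine models, observing additivity in the mixed super vector space so that $\A((V\oplus W)_\R)$ is the product of the models, and then gluing charts to get products of general mixed supermanifolds, with closure of $\Man^\mu$ clear since the odd parts of the local models vanish. Your explicit treatment of the terminal object and of the naturality of the chartwise bijections is just the standard bookkeeping made visible; no gap.
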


Let $M_0$ be a mixed manifold.
Consider the sheaf $\mathcal{T}_{M_0}$ whose sections
over $U_0$ are complex
linear derivations of $\mathcal{O}_{M_0}|_{U_0}$
and the subsheaf $\mathcal{T}_{M_0, \R}$
of those derivations which restrict to derivations of $\mathcal{O}_{M_0, \R}|_{U_0}.$
Then $\mathcal{T}_{M_0, \R}$ contains
a complex ideal $\mathcal{T}_{M_0, \C}$ of derivations which annihilate
$\mathcal{O}_{M_0, \R}|_{U_0}.$ The quotient by this sheaf is (non-canonically)
isomorphic to the sheaf of derivations of $\mathcal{O}_{M_0, \R}.$

Now, if $M$ is a mixed supermanifold, the complex tangent sheaf is the sheaf $\mathcal{T}_M$
whose sections over $U_0$ are the complex linear superderivations of $\mathcal{O}_M|_{U_0}.$
In analogy with the definition of the real functions, one defines the \emph{real tangent sheaf}
by the pullback
\[
 \xymatrix{
\mathcal{T}_{M, \R} \ar[r]\ar[d] & \mathcal{T}_{M_0, \R}\ar[d]\\
\mathcal{T}_M \ar[r] & \mathcal{T}_{M_0},
}
\]
where the lower arrow takes a vector field to its underlying vector field.

An important point is that, although $\mathcal{T}_{M, \R}$ is not closed
under brackets, its even part is and consists of those derivations which restrict
to derivations of $\mathcal{O}_{M, \R}.$
In analogy, one defines $\mathcal{T}_{M, \C}\subseteq \mathcal{T}_{M ,\R}$
in terms of $\mathcal{T}_M$, $\mathcal{T}_{M_0}$ and $\mathcal{T}_{M_0, \C}.$
Then $(\mathcal{T}_{M, \C})_{\bar{0}}\subseteq (\mathcal{T}_{M, \R})_{\bar{0}}$ is an ideal.

The tangent space $T_m M$ at $m\in M_0$ is the complex super vector space of complex derivations
$\mathcal{O}_{M, m}\rightarrow \C.$
This comes with a mixed structure by considering
the real subspace $(T_m M)_\R$
consisting of those derivations
which induce a derivation $\mathcal{O}_{M_0, \R, m}\rightarrow \R$
together with its complex subspace $(T_m M)_\C$ of those derivations in $(T_mM)_\R$
which vanish on $\mathcal{O}_{M_0, \R, m}.$

\subsection{Mixed Lie supergroups}
In this section we give a brief review on basic results concerning
mixed Lie supergroups.

\subsubsection{Equivalence of mixed Lie supergroups and mixed super pairs}

\begin{Def}
A  \emph{mixed Lie supergroup} is a group object in $\SMan^\mu.$
\end{Def}

First we characterize mixed Lie groups, i.e.~mixed Lie supergroups with trivial
odd direction.
For a real (resp.~mixed) Lie group $G$ we will use the notation $\mathrm{Lie}_\R(G)$
(resp.~$\mathrm{Lie}_\C(G)$) for the Lie algebra
of left-invariant derivations of the sheaf of real valued smooth functions
(resp.~sheaf of complex functions).

We define a \emph{mixed pair} to be a pair
$(\mathfrak{g}_\C, G^{sm})$
consisting of a real Lie group $G^{sm}$
and an $\mathrm{Ad}_{G^{sm}}$-invariant ideal
$\mathfrak{g}_\C\subseteq \mathrm{Lie}_\R(G^{sm})$
endowed with a complex structure which is respected by the adjoint
action of $G^{sm}.$ 

A morphism of such pairs is a morphism of Lie groups such that
the differential at the identity
respects the complex ideals.

\begin{Lemma}
 The categories of mixed Lie groups and mixed pairs are equivalent.
\end{Lemma}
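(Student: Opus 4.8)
The plan is to construct mutually inverse functors, essentially by the same mechanism that produces the classical equivalence between simply-connected Lie groups and Lie algebras, but now tracking the extra complex structure. In one direction, send a mixed Lie group $G$ to the pair $(\mathfrak{g}_\C, G^{sm})$, where $G^{sm}$ is the underlying \emph{real} Lie group of $G$ (i.e.\ the manifold $G_0$ with its real-analytic smooth structure, forgetting the partially holomorphic sheaf), and $\mathfrak{g}_\C := (\mathcal{T}_{G,\C})_{\bar 0}$ restricted to left-invariant vector fields. By the remarks preceding the statement, $(\mathcal{T}_{G,\C})_{\bar 0}$ is an ideal inside $(\mathcal{T}_{G,\R})_{\bar 0}$, and since $G$ has trivial odd part this is an ideal $\mathfrak{g}_\C \subseteq \mathrm{Lie}_\R(G^{sm})$; it carries a complex structure because the partially holomorphic sheaf $\mathcal{O}_{G_0}$ makes the corresponding tangent directions complex, and left-invariance together with the group structure forces this complex structure to be $\mathrm{Ad}$-invariant (the adjoint action is by pushforward along smooth group automorphisms, which preserve the CR-structure, hence its associated complex distribution). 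A morphism of mixed Lie groups is in particular a smooth morphism of the underlying real Lie groups whose differential, being a morphism in $\Man^\mu$ hence respecting the complex distributions, carries $\mathfrak{g}_\C$ complex-linearly into the target; so this assignment is functorial.

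In the other direction, given a mixed pair $(\mathfrak{g}_\C, G^{sm})$, I put on the manifold underlying $G^{sm}$ the sheaf $\mathcal{O}$ of smooth complex-valued functions whose differential is complex-linear along the left-invariant distribution $D \subseteq TG^{sm}$ spanned by $\mathfrak{g}_\C$ with its given complex structure. One checks this $D$ is involutive: $\mathfrak{g}_\C$ is a (complex) Lie subalgebra — indeed an ideal — of the left-invariant vector fields, so the distribution it spans is bracket-closed, and the complex structure is compatible with the bracket precisely because it is $\mathrm{Ad}$-invariant (differentiating the $\mathrm{Ad}$-invariance gives $[\,\cdot\,, J\,\cdot\,] = J[\,\cdot\,,\,\cdot\,]$ on $\mathfrak{g}_\C$, the integrability condition for a left-invariant Levi-flat CR-structure). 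By the description of mixed manifolds as manifolds with a Levi-flat CR-structure (the Remark after the definition of $\Man^\mu$), this makes $G^{sm}$ into a mixed manifold locally of the form $\R^n \times \C^m$; multiplication and inversion are smooth and, being equivariant for left translation, preserve $D$ and its complex structure, hence are morphisms in $\Man^\mu$. This produces a mixed Lie group, and a morphism of pairs — smooth, differential complex-linear on the $\mathfrak{g}_\C$-directions — is by construction a morphism of the resulting mixed manifolds, so this too is functorial.

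It remains to see the two constructions are mutually inverse up to natural isomorphism. Starting from a mixed Lie group $G$, the real manifold and the left-invariant complex distribution recovered are those of $G$ by construction, and the partially holomorphic sheaf is determined by the complex distribution (functions whose differential is complex-linear along it), so we recover $G$; naturality is immediate since everything was defined by taking underlying data. Starting from a pair $(\mathfrak{g}_\C, G^{sm})$, the underlying real Lie group of the mixed Lie group we built is $G^{sm}$ on the nose, and its ideal of ``complex'' left-invariant derivations is exactly $\mathfrak{g}_\C$ with its complex structure, since those are by definition the derivations annihilated — up to the real-valued part — in the pullback defining $\mathcal{T}_{\cdot,\C}$. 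I expect the main obstacle to be the integrability bookkeeping: verifying cleanly that $\mathrm{Ad}$-invariance of the complex structure on the ideal $\mathfrak{g}_\C$ is equivalent to the Levi-flat CR-integrability of the left-invariant distribution it generates, and hence that the candidate sheaf $\mathcal{O}$ actually realizes $G^{sm}$ as an object of $\Man^\mu$ rather than merely as a CR-manifold with non-closed structure. Once that is in hand, functoriality, compatibility with morphisms, and the inverse-equivalence checks are routine.
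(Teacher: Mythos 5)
Your construction goes through, but it is a genuinely different route from the paper's. The paper disposes of the lemma in one line: as in the classical passage from a complex structure on the Lie algebra to a complex-analytic structure on the group, one builds charts near the identity via the exponential map and observes that the Baker--Campbell--Hausdorff series, being expressed purely in brackets, is partially holomorphic in the $\mathfrak{g}_\C$-directions because $\mathfrak{g}_\C$ is an ideal on which the bracket is $J$-bilinear (the infinitesimal form of $\mathrm{Ad}$-invariance); translating these charts around gives the mixed structure and the partial holomorphy of multiplication simultaneously and quite explicitly. You instead build the left-invariant distribution $D$ spanned by $\mathfrak{g}_\C$, check involutivity and CR-integrability from the same infinitesimal identity $[X,JY]=J[X,Y]$, and then invoke the identification of mixed manifolds with Levi-flat CR manifolds (the Remark after the definition of $\Man^\mu$) to conclude that the sheaf of functions complex-linear along $D$ gives an object of $\Man^\mu$. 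This is more conceptual and makes the inverse-equivalence checks transparent, but the step you yourself flag as the main obstacle is exactly where the cost sits: passing from an integrable Levi-flat CR structure to the local model $\R^n\times\C^m$ in the smooth category is Nirenberg's complex Frobenius theorem, a nontrivial external input that the BCH argument avoids entirely. Two small points of bookkeeping: for $\mu\colon G\times G\to G$ to be a mixed morphism you need \emph{right} translations (not just left ones) to preserve $(D,J)$, and this is precisely where the ideal property and the $\mathrm{Ad}$-equivariance of $J$ are used, since $d\mu_{(g,h)}(X,Y)=dR_hX+dL_gY$; and in the recovery direction it is worth saying explicitly that the left-invariant sections of $(\mathcal{T}_{G,\C})_{\bar 0}$ form exactly $\mathfrak{g}_\C$ with its original $J$, which is immediate from your definition of the sheaf but is the statement that makes the two functors inverse on objects.
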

\begin{proof}
This follows from the Baker--Campbell--Hausdorff formula
as in the case of complex analytic structures on Lie groups.
\end{proof}

As usual, the adjoint representation of a mixed Lie group $G$ is the differential at the identity
of the conjugation action of $G$ on itself.
It can be seen as a mixed morphism
$G\times \A(\mathfrak{g}_\R)\rightarrow \A(\mathfrak{g}_\R).$

Now, we turn our attention to mixed Lie supergroups.
A \emph{mixed super pair} consists of a pair
$(\mathfrak{g}, G_0)$ where $G_0$ is a mixed Lie group
and $\mathfrak{g}$ is complex Lie superalgebra together with
\begin{itemize}
 \item[(a)] an isomorphism $\mathrm{Lie}_\C(G_0) \cong \mathfrak{g}_{\bar{0}}$, and
 \item[(b)] an action $\sigma\colon G_0\times \A(\mathfrak{g})\rightarrow \A(\mathfrak{g})$ such that
            $\sigma(g)|_{\A(\mathfrak{g}_{\bar{0}, \R})} = \mathrm{Ad}_{G_0}$ and the differential
            of $\sigma$ acts as the adjoint representation
\[
d\sigma(X)(Y) = [X, Y]. 
\]
\end{itemize}

There is an evident notion of a morphism of mixed super pairs
and the following result follows along the same lines as the corresponding
for real and complex Lie supergroups.
\begin{Prop}
\label{prop: equivalence pairs and groups}
 The categories of mixed super pairs and mixed Lie supergroups are equivalent.
\end{Prop}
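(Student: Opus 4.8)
The plan is to carry out the Harish--Chandra pair construction familiar from the real and complex cases (cf.\ \cite{CCF}), checking at each step that the partially holomorphic structure is carried along; this is essentially formal because, by the definition of a mixed super vector space, all odd directions are honestly complex, so once the body is handled as a mixed Lie group by the preceding equivalence of mixed Lie groups with mixed pairs, nothing new happens in the odd direction. For a mixed Lie supergroup $G$ I would set $G_0 := r(G)$, a mixed Lie group, and take $\mathfrak{g}$ to be the complex Lie superalgebra of left-invariant complex superderivations of $\mathcal{O}_G$. Restriction to the body identifies $\mathfrak{g}_{\bar{0}}$ with $\mathrm{Lie}_\C(G_0)$, because the even left-invariant derivations are exactly those preserving $\mathcal{J}_G$ and hence descend to $\mathcal{O}_{G_0}$; this is condition (a). The action $\sigma$ is the restriction along $G_0\hookrightarrow G$ of the adjoint action of $G$ on $\A(\mathfrak{g})$, i.e.\ of the differential at the identity of conjugation; the identities $\sigma(g)|_{\A(\mathfrak{g}_{\bar{0},\R})} = \mathrm{Ad}_{G_0}$ and $d\sigma(X)(Y) = [X,Y]$ are the usual relation $d\,\mathrm{Ad} = \mathrm{ad}$ read in $\SMan^\mu$, which is condition (b). This defines $\Phi(G) := (\mathfrak{g},G_0)$, functorially in $G$.

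For the quasi-inverse $\Psi$, let $(\mathfrak{g},G_0)$ be a mixed super pair. I would regard $\mathcal{O}_{G_0}$ as a sheaf of $U(\mathfrak{g}_{\bar{0}})$-modules via the identification of $\mathfrak{g}_{\bar{0}}$ with the left-invariant derivations of $\mathcal{O}_{G_0}$ and set
\[
\mathcal{O}_G := \uHom_{U(\mathfrak{g}_{\bar{0}})}\bigl(U(\mathfrak{g}),\mathcal{O}_{G_0}\bigr),
\]
the sheaf of $U(\mathfrak{g}_{\bar{0}})$-linear homomorphisms $U(\mathfrak{g})\to\mathcal{O}_{G_0}$, a sheaf of supercommutative superalgebras on the space underlying $G_0$. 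By Poincar\'e--Birkhoff--Witt, $U(\mathfrak{g})\cong U(\mathfrak{g}_{\bar{0}})\otimes_\C\bigwedge\mathfrak{g}_{\bar{1}}$ as left $U(\mathfrak{g}_{\bar{0}})$-modules, so $\mathcal{O}_G$ is locally isomorphic to $\mathcal{O}_{G_0}\otimes_\C\bigwedge\mathfrak{g}_{\bar{1}}^*$; since $\mathfrak{g}_{\bar{1}}$ is a complex vector space and $G_0$ is locally $\A((V_\R)_{\bar{0}})$, this is the structure sheaf of $\A(V_\R)$ for a mixed super vector space with odd part $\mathfrak{g}_{\bar{1}}^*$, the odd-direction conditions holding automatically. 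Hence $G := ((G_0)_0,\mathcal{O}_G)$ is a mixed supermanifold with body $G_0$. The multiplication $\mu_G\colon G\times G\to G$ is assembled, exactly as in the real and complex cases, from the multiplication of $G_0$, the comultiplication $U(\mathfrak{g})\to U(\mathfrak{g})\otimes U(\mathfrak{g})$, and $\sigma$ (used to move the $U(\mathfrak{g})$-factor past the $G_0$-factor), while the unit and the inversion come from the counit and the antipode. Because $\sigma$ is a mixed morphism and all remaining operations are $\C$-linear algebra on the finite-dimensional $\mathfrak{g}$, the multiplication, unit and inversion are morphisms in $\SMan^\mu$, and the group axioms reduce to the coassociativity, counit and antipode identities in $U(\mathfrak{g})$ together with the compatibility of $\sigma$ with $\mathrm{Ad}_{G_0}$ and $\mathrm{ad}_{\mathfrak{g}}$. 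Thus $\Psi(\mathfrak{g},G_0)$ is a mixed Lie supergroup, and $\Psi$ becomes a functor by functoriality of $U(-)$ and of $\uHom$.

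It then remains to exhibit natural isomorphisms $\Phi\Psi\cong\mathrm{id}$ and $\Psi\Phi\cong\mathrm{id}$, with their evident action on morphisms. For $\Phi\Psi\cong\mathrm{id}$ the body of $\Psi(\mathfrak{g},G_0)$ is $G_0$ by construction, PBW identifies its complex Lie superalgebra with $\mathfrak{g}$, and its adjoint action restricted to the body recovers $\sigma$, since that is how $\mu_G$ was defined. For $\Psi\Phi\cong\mathrm{id}$, given a mixed Lie supergroup $G$ one has the canonical morphism $\Psi(\Phi(G))\to G$ obtained by pairing $U(\mathfrak{g})$, with $\mathfrak{g}$ the Lie superalgebra of $G$, against germs at the identity and composing with left translation; it is the identity on bodies and, by PBW in the fibre at the identity, an isomorphism of structure sheaves. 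Both families of isomorphisms are seen to be natural, so $\Phi$ and $\Psi$ are mutually quasi-inverse.

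The step I expect to be the main obstacle is the verification in the second paragraph that $\mu_G$ is a morphism in $\SMan^\mu$ --- that it respects the Levi-flat CR, i.e.\ partially holomorphic, structure --- and that it is associative. Associativity follows from coassociativity of the comultiplication on $U(\mathfrak{g})$ together with the compatibility of $\sigma$ with $\mathrm{Ad}_{G_0}$; respecting the mixed structure is the one genuinely new point beyond the real and complex settings, but it is forced, since $\mathcal{O}_G$ and $\mu_G$ are built functorially out of the sheaf $\mathcal{O}_{G_0}$ on the mixed manifold $G_0$ and the finite-dimensional data of $\mathfrak{g}$, and the odd directions, being complex, impose no further reality conditions.
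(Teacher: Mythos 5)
Your sketch is correct and follows essentially the same route as the paper, whose proof is simply a citation of the super Harish--Chandra pair equivalence in \cite[7.4]{CCF}: you spell out exactly that construction (the body $G_0 = r(G)$ with its left-invariant superderivations in one direction, and the Koszul-type sheaf $\uHom_{U(\mathfrak{g}_{\bar{0}})}(U(\mathfrak{g}),\mathcal{O}_{G_0})$ with multiplication assembled from $\mu_{G_0}$, the coproduct and $\sigma$ in the other), adding the observation that the partially holomorphic structure is preserved because the odd directions are complex and the even data are already mixed. No discrepancy with the paper's intended argument.
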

\begin{proof}
 See \cite[7.4]{CCF}.
\end{proof}

An important notion is the following.

\begin{Def}
A \emph{mixed real form} of a complex Lie supergroup $G$
is a mixed Lie supergroup $G_\R$
together with a group morphism $i\colon G_\R \rightarrow G$
such that $i_0\colon (G_\R)_0\rightarrow G_0$
is the inclusion of a closed subgroup
and $di_e\colon T_e(G_\R)\rightarrow T_e(G)$
is an isomorphism.
\end{Def}
\begin{Rmk}
 Any mixed real form $G_\R\leqslant G$ yields a 
 mixed real form $(G_\R)_0\leqslant G_0.$
 Conversely, given a mixed real form $(G_0)_\R\leqslant G_0$,
 the pullback
 \[
 \xymatrix{
  G_\R \ar[r] \ar[d] & r^* (G_0)_\R\ar[d]\\
  G\ar[r] & r^*G_0
}
  \]
 is representable and defines a mixed real form of $G.$
For that reason, we will adopt the notation $(G_\R)_0 = (G_0)_\R = G_{0, \R}.$
 \end{Rmk}

\begin{ex}
Finally, we come to discuss the example of linear supergroups.
Let $(V, V_\R, V_\C)$ be a mixed super vector space.
Then we have the complex Lie supergroup
$GL(V)$ given by the complex group $GL(V_{\bar{0}})\times GL(V_{\bar{1}})$
and the Lie superalgebra $\mathfrak{gl}(V).$
An element of
$GL(V)(T)$ is given by an automorphism over $T$ of the trivial vector bundle $\underline{V}_T = T\times \A(V)\rightarrow T.$

Consider the subgroups
of those even invertible isomorphisms
of $V$ respecting $V_\C$ or the pair $V_\C \subseteq V_\R.$
We will denote them by
\[
GL^\mu(V)_{0,\R} \leqslant GL^\mu(V)_{0}\leqslant GL(V)_0.
\]

We then define the two group-valued functors
${GL^\mu(V)}$ and $GL^\mu(V)_{\R}$ on $\SMan^\mu$ by the pullback
\[
\xymatrix{ 
GL^\mu(V)_{(\R)}\ar[r]\ar[d] & r^*{GL^\mu(V)_{0, (\R)}}\ar[d]\\
 GL(V)\ar[r] & r^*{GL(V)_0},
}
\]
where it is understood that the quantities in parentheses are only present in the latter case.

The inclusion
$\mathrm{Lie}_\R(GL^\mu(V)_{0, \R})\subseteq \mathfrak{gl}(V)_{\bar{0}}$
only defines a mixed structure
in the cases 
$V_\C = V_{\bar{1}}$ and
$V_\C = V.$
In this case
$GL^\mu(V)_{\R}$
is representable
and is a mixed real form of $GL(V).$
In general, $GL^{\mu}(V)_{(\R)}$ is not representable.
\end{ex}

\subsubsection{Actions of mixed Lie supergroups and their point functors}
A \emph{left action} of the mixed Lie supergroup $G$ on the mixed supermanifold $M$ is given by a unital
and associative map
$a\colon G\times M\rightarrow M.$
The map $a^\sharp$
can be made explicit in terms of two more basic objects.
First, let $\underline{a}$ denote the action $G_0\times M\rightarrow G\times M\rightarrow M.$
Then any $g\in G_0$ (considered as a map $g\colon\A(\{0\})\rightarrow G$) gives a map
\[
\xymatrix{
a_g\colon M \cong \A(\{0\}) \times M \ar[r]^-{g\times M} & G_0\times M\ar[r]^-{\underline{a}} & M.
}
\]
Secondly, the action gives rise to a Lie superalgebra antimorphism
\begin{gather}
\label{def rho}
\xymatrix{
\rho\colon\mathfrak{g}\ar[r] & \Gamma(\mathcal{T}_M),\ X\mapsto (e\times M)^\sharp \circ (X\otimes 1)\circ a^\sharp
}
\end{gather}
and we have
\begin{itemize}
 \item[(a)] $\rho|_{\mathfrak{g}_{\bar{0}}}(X) = (X\otimes 1)\circ \underline{a}^\sharp$, and
 \item[(b)] $\rho(g\cdot Y) = (a_g^{-1})^\sharp\cdot \rho(Y)\cdot a_g^\sharp.$
\end{itemize}
Conversely, given an action $\underline{a}\colon G_0\times M\rightarrow M$
and $\rho$ satisfying $(a)$ and $(b)$, then one can construct an action $G\times M\rightarrow M$
(cf. \cite[Propositions 8.3.3, 8.3.2]{CCF}).

Now let $G$ be a mixed Lie group
and $M$ a mixed supermanifold
and consider an action $a^{sm}\colon G^{sm}\times M\rightarrow M.$
This gives rise to a Lie algebra morphism
$\mathfrak{g}_\R\rightarrow \Gamma({\mathcal{T}_{M, \R}})_{\bar{0}}.$
The connection between such an action and an action of $G$
is made precise in the next lemma.

\begin{Lemma}
\label{lem: action underlying real Lie group and mixed group}
 The action $a^{sm}$ extends to an action $a\colon G\times M\rightarrow M$
if and only if $\mathfrak{g}$ fits into the following square
\[
\xymatrix{
 \mathfrak{g}_\R \ar[r]\ar[d] & \Gamma({\mathcal{T}_{M, \R}})_{\bar{0}}\ar[d]\\
 \mathfrak{g} \ar[r] &          \Gamma(\mathcal{T}_M)_{\bar{0}}.
}
\]
the lower horizontal arrow being an antimorphism of complex Lie algebras.
The extension is unique if it exists.
Equivalently, the restriction of the upper horizontal arrow to $\mathfrak{g}_\C$
factors as a complex linear map through $\Gamma({\mathcal{T}_{M, \C}})_{\bar{0}}.$
\end{Lemma}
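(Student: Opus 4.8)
The strategy is to reduce the statement to the known super pair / mixed super pair dictionary (Proposition \ref{prop: equivalence pairs and groups}) together with the already-established characterization of when an action of a real Lie group $G^{sm}$ on a supermanifold $M$ extends to an action of the associated real Lie supergroup via the pair $(\rho|_{\mathfrak{g}_{\bar 0}},\rho)$ satisfying (a) and (b). So first I would package the given action $a^{sm}\colon G^{sm}\times M\rightarrow M$ together with its infinitesimal counterpart $\mathfrak{g}_\R\rightarrow \Gamma(\mathcal{T}_{M,\R})_{\bar 0}$ and ask precisely what extra data is needed to promote this to $a\colon G\times M\rightarrow M$. By the mixed super pair description, an action of $G$ is equivalent to the datum of an underlying action of the mixed Lie group $G_0$ on $M$ plus a Lie superalgebra antimorphism $\rho\colon\mathfrak{g}\rightarrow\Gamma(\mathcal{T}_M)$ compatible in the sense of (a) and (b). The underlying action of $G_0$ on $M$ is obtained from $a^{sm}$ exactly when $a^{sm}$ descends to the mixed Lie group $G_0$, and — by the purely even case, i.e. Lemma \ref{lem: action underlying real Lie group and mixed group} without the super prefix, together with Lemma \ref{lem: action underlying real Lie group and mixed group}'s purely-bosonic analogue encoded in the mixed-pair equivalence — this happens precisely when the induced map $\mathfrak{g}_\R\rightarrow\Gamma(\mathcal{T}_{M,\R})_{\bar 0}\rightarrow\Gamma(\mathcal{T}_{M_0,\R})$ carries $\mathfrak{g}_\C$ complex-linearly into $\Gamma(\mathcal{T}_{M_0,\C})$, which is exactly the commutativity of the square at the level of $M_0$.

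Next I would upgrade this from $M_0$ to $M$. Given the underlying $G_0$-action, an extension to $G$ is, by (a) and (b), the same as a complex Lie superalgebra antimorphism $\rho\colon\mathfrak{g}\rightarrow\Gamma(\mathcal{T}_M)_{\bar 0}$ extending the real one, with the equivariance (b); but $\mathfrak{g}$ is generated as a complex Lie superalgebra by $\mathfrak{g}_\R$ (indeed $\mathfrak{g}=\mathfrak{g}_\R\oplus i\,\mathfrak{g}_\C$ with $\mathfrak{g}_{\bar 1}$ already complex), so such a complex-linear extension exists and is unique if and only if the real map $\mathfrak{g}_\R\rightarrow\Gamma(\mathcal{T}_{M,\R})_{\bar 0}$ already factors $\C$-linearly, when restricted to $\mathfrak{g}_\C$, through the complex ideal $\Gamma(\mathcal{T}_{M,\C})_{\bar 0}$. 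This is where the pullback definitions of $\mathcal{O}_{M,\R}$, $\mathcal{T}_{M,\R}$ and of $\mathcal{T}_{M,\C}$ in terms of $M_0$ enter: a derivation in $(\mathcal{T}_{M,\R})_{\bar 0}$ lies in $(\mathcal{T}_{M,\C})_{\bar 0}$ if and only if its image in $\mathcal{T}_{M_0}$ lies in $\mathcal{T}_{M_0,\C}$, so the $\C$-linearity condition on $\mathfrak{g}_\C$ over $M$ is equivalent to the same condition over $M_0$, which in turn — by the first paragraph — is exactly the commutativity of the displayed square. Chasing the pullback squares for $\mathcal{T}_{M,\R}$ and $\mathcal{O}_{M,\R}$ shows the extended $\rho$ automatically lands in the real tangent sheaf and satisfies (b), because both hold after projecting to $\mathcal{T}_M$ and to $\mathcal{T}_{M_0,\R}$ and the target is the pullback.

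For uniqueness, I would observe that any two extensions $a,a'$ of $a^{sm}$ induce the same $\rho$ on $\mathfrak{g}_{\bar 0}$ (determined by $a^{sm}$ via (a)) and the same underlying $G_0$-action, hence — since $\mathfrak{g}$ is generated over $\C$ by $\mathfrak{g}_{\bar 0}$ and $\mathfrak{g}_{\bar 1}=\mathfrak{g}_{\C,\bar 1}$, and $\rho$ is a complex antimorphism — the same $\rho$, and therefore $a=a'$ by the faithfulness of the mixed super pair dictionary; alternatively this is immediate from the corresponding uniqueness statement in the real/complex case cited in \cite[Propositions 8.3.3, 8.3.2]{CCF}, applied after forgetting the mixed structure. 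The main obstacle is the bookkeeping in the second paragraph: one must verify carefully that the $\C$-linear extension of $\rho$ to all of $\mathfrak{g}$ still takes values in the even part of the \emph{real} tangent sheaf $\mathcal{T}_{M,\R}$ (not merely $\mathcal{T}_M$), which is precisely the content of the factorization-through-$\mathcal{T}_{M,\C}$ reformulation and is the reason the two phrasings of the lemma are equivalent; this is a diagram chase through the defining pullback squares, routine but the only step with any real content.
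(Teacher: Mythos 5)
There is a genuine gap, and it sits at the central step. In this lemma $G$ is a purely even mixed Lie \emph{group} acting on the mixed supermanifold $M$, so the detour through the mixed super pair dictionary and the data $(\underline{a},\rho)$ with (a), (b) is beside the point; but more importantly, your reduction of the criterion to the body $M_0$ is false. It is true, by the pullback definitions, that \emph{membership} of a vector field in $\mathcal{T}_{M,\C}$ (or $\mathcal{T}_{M,\R}$) is detected by its underlying vector field on $M_0$. But the \emph{complex linearity} of the factored map, i.e.\ the identity $\rho(iX)=i\rho(X)$ in $\Gamma(\mathcal{T}_M)_{\bar 0}$ for $X\in\mathfrak{g}_\C$, is an identity of vector fields on $M$ and is not detected on $M_0$: the two sides may differ by a field with nilpotent coefficients. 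Concretely, let $G=\C^\ast$ act on $M=\A(\C^{0|2})$ by $\theta_1\mapsto t\,\theta_1$, $\theta_2\mapsto \bar t\,\theta_2$. Here $M_0$ is a point, so the condition ``over $M_0$'' holds trivially and every vector field on $M$ is real, yet $\rho(iv)\neq i\rho(v)$ and the action does not extend to the complex group $G$ — exactly the situation the lemma's complex-linearity clause is there to exclude. So the step ``the $\C$-linearity condition on $\mathfrak{g}_\C$ over $M$ is equivalent to the same condition over $M_0$'' breaks down, and with it the whole reduction.

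Secondly, even granting that reduction, you dispose of the even case by citing ``the purely even case \ldots encoded in the mixed-pair equivalence.'' The equivalence of mixed Lie groups with mixed pairs concerns the group structure itself, not extensions of group actions on (super)manifolds; the statement you invoke there is essentially the lemma being proved, so the argument is circular at its analytic core. That core — which the paper handles directly — is the Cauchy--Riemann-type implication: commutativity of the square (equivalently, complex linearity of the fundamental vector fields along $\mathfrak{g}_\C$, as vector fields on all of $M$, acting on all of $\mathcal{O}_M$) says that the differential of $a^{sm}$ is complex linear in the $\mathfrak{g}_\C$-directions, hence $(a^{sm})^\sharp$ takes values in functions partially holomorphic along those directions, which is precisely what it means for $a^{sm}$ to define a morphism $G\times M\rightarrow M$ of mixed supermanifolds. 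Your uniqueness argument is fine in substance (and agrees with the paper's: every $X\in\mathfrak{g}$ is $X_1+iX_2$ with $X_j\in\mathfrak{g}_\R$, so the extension is determined by $a^{sm}$), but the existence direction needs the direct partial-holomorphy argument on $M$, not a passage through $M_0$.
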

\begin{proof}
Uniqueness is clear since any element $X\in \mathfrak{g}$ can be written
in the form $X_1 + i X_2$
for some $X_j\in \mathfrak{g}_\R.$
If the extension in the diagram exists, then the differential $TG^{sm}_\R\times TM\rightarrow TM$
is complex linear on $TG^{sm}_\C\times TM\rightarrow TM$,
which proves that the action extends to $G\times M.$
\end{proof}

Let $T$ be an arbitrary mixed 
supermanifold.
Consider a morphism $\varphi_0\colon T\rightarrow G$ and a homogeneous
derivation
$X\colon \mathcal{O}_G\rightarrow ({e_T}_0)_*\mathcal{O}_T$ along
$e_{T}\colon T\rightarrow *\rightarrow G.$

Given this, we construct a homogeneous derivation along $\varphi_0$ as follows:
\[
\xymatrixcolsep{6pc}
\xymatrix{
\varphi_0\cdot X\colon \mathcal{O}_G  \ar[r]^-{(\varphi_0\times T)^\sharp \circ (1\otimes X)\circ \mu^\sharp} & (\mu_0)_* {(\varphi_0\times e_T)_0}_*\mathcal{O}_{T\times T}\ar[r]^-{\Delta^\sharp} & ({\varphi}_0)_*\mathcal{O}_T.
}
\]
Similarly, for two homogeneous derivations $X$ and $Y$ we set
\[
X\cdot Y := \Delta^\sharp \circ (\mu_0)_*((X\otimes 1)\circ (1\otimes Y)) \circ \mu^\sharp.
\]
Now, suppose $G$ acts on $M$ and let $X$ and $Y$ be as above.
We set
\[
\xymatrixcolsep{5pc}
\xymatrix{
 \rho(X)\colon \mathcal{O}_{T\times M}  \ar[r]^-{(1\otimes X\otimes 1)\circ (T\times a)^\sharp} & \mathcal{O}_{T\times T\times M}\ar[r]^-{(\Delta\times M)^\sharp} & \mathcal{O}_{T\times M}.
}
\]

Then $\rho(X)$ is the $\mathcal{O}_T$-linearization of
$\rho(X)\circ p_T^\sharp$, where $p_T\colon T\times M\rightarrow M$
is the projection.
From the associativity of the action it follows that
\[
\rho(X\cdot Y) = (-1)^{|X||Y|}\rho(Y)\circ \rho(X).
\]

Let $n\geq 0$, then $\Gamma(\mathcal{O}_{\A(\C^{0|n})})$
is the exterior algebra on generators $\eta_i.$
As usual, given a non-empty subset
$I\subset \{1, \ldots, n\}$,
we set $\eta^I  = \prod_{i\in I} \eta_i$,
where we implicitly use the ordering on $I$ induced from the standard ordering
on $\{1, \ldots, n\}.$
\begin{Lemma}
\label{lem: family of automorphisms from group action}
Suppose $G$ is mixed and acts on the mixed supermanifold $M.$
\begin{itemize}
\item[(a)]
Any $\varphi\in G(\A(\C^{0|n})\times T)$
is uniquely determined by $\varphi_0\in G(T)$ and homogeneous derivations $X_I$ along $e_{T}$ of degree $|I|$
and
\[
\varphi^\sharp = \varphi_0^\sharp \cdot \prod_{k = 1}^n\Big( 1 + \sum_{k\in I\subseteq\{1, \ldots, k\}}\eta^I X_I\Big).
\]
\item[(b)]
Moreover, under this identification, the morphism $a_{\varphi}$, defined as the composition
\[
\xymatrix{
( \A(\C^{0|n})\times T \times  a)\circ ((\mathrm{pr}_{\A(\C^{0|n})\times T}, \varphi)\times M)\colon \A(\C^{0|n})\times T\times M\ar[r] & \A(\C^{0|n})\times T\times M,
}
\]
takes the form
\[
a_{\varphi}^\sharp = \prod_{k = n}^1 \Big(1 + \sum_{k\in I \subseteq\{1, \ldots, k\}} \eta^I \rho(X_I)\Big)\cdot a_{\varphi_0}^\sharp.
\]
\end{itemize}
\end{Lemma}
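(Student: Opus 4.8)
The plan is to reduce everything to an inductive computation in the exterior algebra $\Gamma(\mathcal{O}_{\A(\C^{0|n})})$, using the fact (Proposition \ref{prop: equivalence pairs and groups}) that a $T$-point of $G$ over $\A(\C^{0|n})$ is determined by its body map together with the action of the odd tangent directions. First I would observe that for $n=0$ both statements are empty (part (a) reads $\varphi^\sharp = \varphi_0^\sharp$, part (b) reads $a_\varphi^\sharp = a_{\varphi_0}^\sharp$), and for $n=1$ part (a) is precisely the description of a $T$-point of $G$ over $\A(\C^{0|1})$ as a pair $(\varphi_0, X_{\{1\}})$ with $X_{\{1\}}$ a homogeneous odd derivation along $e_T$; this is the base case. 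The heart of the argument is the inductive step, using the splitting $\A(\C^{0|n}) \cong \A(\C^{0|1})\times \A(\C^{0|n-1})$ (with $\eta_n$ the last coordinate): a point $\varphi$ over $\A(\C^{0|n})\times T$ can be written uniquely as $\varphi' \cdot (1 + \eta_n Z)$ where $\varphi'$ is pulled back from a point over $\A(\C^{0|n-1})\times T$ and $Z$ is an odd derivation along the corresponding augmentation. Expanding $\varphi'^\sharp$ by the inductive hypothesis and multiplying out the single extra factor $1 + \eta_n Z$, and then collecting terms by the highest index they involve (which forces the $k\in I$ condition in the product), gives the stated formula; the derivations $X_I$ for $|I|\geq 2$ arise as the appropriate products $X\cdot Y$ defined just before the lemma.

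For part (b) I would proceed similarly, but now transport the product decomposition of $\varphi^\sharp$ through the action. The key identities are already recorded in the excerpt: the associativity relation $\rho(X\cdot Y) = (-1)^{|X||Y|}\rho(Y)\circ\rho(X)$ and the compatibility of $a_\varphi$ with composition of $T$-points, i.e. $a_{\varphi\cdot\psi} = a_\varphi \circ a_\psi$ in the appropriate sense. Applying $a_{(\cdot)}$ to the factorization $\varphi = \varphi' \cdot (1 + \eta_n Z)$ yields $a_\varphi^\sharp = a_{(1+\eta_n Z)}^\sharp \cdot a_{\varphi'}^\sharp = (1 + \eta_n \rho(Z)) \cdot a_{\varphi'}^\sharp$; here the order reversal between (a) and (b) (the product in (b) runs from $k=n$ down to $1$) comes exactly from the sign-twisted antimultiplicativity of $\rho$. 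Substituting the inductive formula for $a_{\varphi'}^\sharp$ and again sorting by highest index completes the induction. One should check that $\rho(X_I)$ for the composite derivations $X_I$ indeed matches $\rho$ of the corresponding product, which is the content of the associativity identity applied repeatedly.

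The main obstacle I expect is bookkeeping: making the uniqueness in (a) precise requires showing that the assignment $\varphi \mapsto (\varphi_0, (X_I)_I)$ is well-defined and bijective, which amounts to checking that the exterior-algebra expansion $\prod_k(1 + \sum_{k\in I\subseteq\{1,\dots,k\}}\eta^I X_I)$ hits every $T$-point exactly once — this is a matter of triangularity of the change of variables between the "coordinates" $X_I$ and the raw Taylor coefficients of $\varphi^\sharp$ along the nilpotents $\eta^I$, together with the constraint that $\varphi^\sharp$ be a homomorphism of sheaves of superalgebras (equivalently, by Proposition \ref{prop: equivalence pairs and groups}, the constraint cutting $G(\A(\C^{0|n})\times T)$ out of all such tuples). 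The signs in part (b), coming from the Koszul rule in $\rho(X\cdot Y) = (-1)^{|X||Y|}\rho(Y)\circ\rho(X)$ and from commuting the odd elements $\eta^I$ past odd derivations, are the other place where care is needed; I would handle them by doing the $n=1$ and $n=2$ cases explicitly to fix conventions before running the general induction.
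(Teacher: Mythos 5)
Your proposal is correct and follows essentially the same route as the paper, which proves (a) by induction on $n$ and reduces (b) to the associativity identity $(\mu\times M)^\sharp \circ a^\sharp = (G\times a)^\sharp \circ a^\sharp$ --- precisely the identity underlying your use of the multiplicativity of $\varphi\mapsto a_\varphi$ and the sign-twisted antimultiplicativity of $\rho$, including the resulting reversal of the product order.
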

\begin{proof}
 The first part is proved by induction on $n$ and the second part then boils down to
 $(\mu\times M)^\sharp \circ a^\sharp = (G\times a)^\sharp \circ a^\sharp.$
\end{proof}

\subsection{Mixed real forms of principal $G$-bundles}

Suppose given a mixed supermanifold $M$ and a group-valued functor $G$
on $\SMan^\mu.$
A principal $G$-bundle is a functor $P$ on $\SMan^\mu$ together
with a right $G$-action and a map $\pi\colon P\rightarrow M$
equivariant with respect to the trivial action on $M$
such that for each $m\in M_0$ there exist an open neighbourhood
$U$ and equivariant isomorphisms $U\times G\rightarrow P|_U$
over $U.$ This reduces to the usual definition if $G$ is representable.

Later we will need to build real forms of certain principal bundles.
This will be done so with the help of the following lemma.

\begin{Lemma}
\label{lem: real forms of principal bundles}
Let $G$ be a complex Lie supergroup with mixed real form $G_\R.$
Let $P\rightarrow M$ be a principal $G$-bundle over a mixed supermanifold $M$ and $P_{0, \R}\rightarrow P_{0}$
a reduction of $P_0$ to $G_{0, \R}.$
Then the pullback
\[
\xymatrix{
 P_\R \ar[d]\ar[r] & r^*(P_{0, \R})\ar[d]\\
 P \ar[r] & r^*(P_0).
}
\]
is a principal $G_\R$-bundle.
\end{Lemma}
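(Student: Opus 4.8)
The plan is to verify the defining properties of a principal $G_\R$-bundle directly from the pullback description, reducing everything to the local triviality already available for $P\to M$ and for $P_{0,\R}\to P_0$, and using Lemma~\ref{lem: action underlying real Lie group and mixed group} (together with the equivalence of Proposition~\ref{prop: equivalence pairs and groups}) to promote an action of the underlying mixed Lie \emph{group} on a pullback to an action of the full mixed Lie \emph{supergroup}. First I would observe that $P_\R$ carries a natural right $G_\R$-action: on $r^*(P_{0,\R})$ one has the right $G_{0,\R}$-action, on $P$ one has the right $G$-action, these agree after mapping to $r^*(P_0)$ because $G_\R\to G\to r^*G_0$ factors through $r^*G_{0,\R}$ by definition of a mixed real form, so the universal property of the pullback square produces a unique $G_\R$-action on $P_\R$, and it is clearly unital and associative by uniqueness. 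The projection $\pi_\R\colon P_\R\to M$ is obtained by composing $P_\R\to P\xrightarrow{\pi} M$; it is $G_\R$-equivariant for the trivial action on $M$ because $\pi$ is $G$-equivariant, and compatibility with the $r^*$-component is automatic since $M\to r^*M_0$ makes the relevant square commute.

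Next I would prove local triviality. Fix $m\in M_0$. Since $\pi$ is a principal $G$-bundle there is an open neighbourhood $U$ of $m$ and an equivariant isomorphism $\Phi\colon U\times G\xrightarrow{\sim} P|_U$ over $U$; passing to bodies gives a $G_0$-equivariant isomorphism $\Phi_0\colon U_0\times G_0\xrightarrow{\sim} P_0|_{U_0}$. After shrinking $U$ I may assume the reduction $P_{0,\R}\to P_0$ is trivial over $U_0$ as a $G_{0,\R}$-bundle; transporting the trivialization along $\Phi_0^{-1}$ and using that a $G_{0,\R}$-reduction of the trivial $G_0$-bundle is determined by a section of $G_0/G_{0,\R}$, I can shrink further so that $\Phi_0$ identifies $P_{0,\R}|_{U_0}$ with $U_0\times G_{0,\R}$. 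Now I form the candidate trivialization $U\times G_\R \to P_\R|_U$: on $U\times G\xrightarrow{\Phi} P|_U$ and on $r^*(U_0\times G_{0,\R})\xrightarrow{r^*\Phi_0} r^*(P_{0,\R}|_{U_0})$, and these are compatible over $r^*(P_0|_{U_0})$ by the choices above, so by the universal property of the defining pullback of $P_\R$ — noting that $U\times G_\R$ is itself the pullback of $U\times G$ and $r^*(U_0\times G_{0,\R})$ over $r^*(U_0\times G_0)$, since $r^*$ preserves products and pullbacks commute — I get an isomorphism $U\times G_\R\xrightarrow{\sim} P_\R|_U$, which is equivariant and over $U$ since both constituent maps are.

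The one genuine subtlety — and the step I expect to be the main obstacle — is that $G_\R$ is in general only a group-\emph{valued functor}, not representable (exactly as in the $GL^\mu(V)_\R$ example), so "principal $G_\R$-bundle" must be read in the functorial sense of the preceding paragraph, and I must be careful that the action map $P_\R\times G_\R\to P_\R$ exists as a genuine natural transformation rather than being merely defined on bodies and odd directions separately. This is precisely what Lemma~\ref{lem: action underlying real Lie group and mixed group} (in the supergroup form supplied by Proposition~\ref{prop: equivalence pairs and groups} and the action/$\rho$ correspondence recorded after equation~\eqref{def rho}) is for: the pullback presentation exhibits any point of $P_\R$ over a test supermanifold $T$ as a compatible pair consisting of a $T$-point of $P$ and a $T_0$-point of $P_{0,\R}$, the $G$-action on $P$ restricts along $G_\R\hookrightarrow G$, and the required reality/complex-linearity condition on the induced infinitesimal action — that the $\mathfrak g_\C$-direction factors through $\mathcal T_{P_\R,\C}$ — holds because it already holds for $P$ and the pullback imposes no new even infinitesimal directions beyond those visible on the two corners. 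Hence the functorial right $G_\R$-action is well defined, and with local triviality established above, $P_\R\to M$ is a principal $G_\R$-bundle.
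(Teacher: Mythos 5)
There is a genuine gap at the heart of your local triviality argument. The step ``I can shrink further so that $\Phi_0$ identifies $P_{0,\R}|_{U_0}$ with $U_0\times G_{0,\R}$'' does not follow: in the trivialization $\Phi_0$ the reduction $P_{0,\R}|_{U_0}$ corresponds to a section $U_0\rightarrow G_0/G_{0,\R}$, and shrinking $U_0$ never turns this section into the constant identity coset. What shrinking buys you is only a lift $g\colon U_0\rightarrow G_0$ of that section, and you must then \emph{correct} the trivialization by $g$ so that its body becomes compatible with the chosen trivialization of $P_{0,\R}$. Moreover, this correction cannot be performed on the body alone: the universal property of the pullback defining $P_\R$ requires the trivialization $U\times G\rightarrow P|_U$ of the full bundle and the trivialization of $r^*(P_{0,\R})$ to agree over $r^*(P_0)$, so you must modify $\Phi$ on all of $U$ by a map extending $g$ from $U_0$ to $U$. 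A general open subsupermanifold admits no retraction onto its body, so such an extension is not automatic; this is exactly why the paper's proof works on coordinate charts $U_i\cong\A(V_\R)$, uses their canonical retractions $r_i\colon U_i\rightarrow (U_i)_0$, measures the discrepancy $g_i\colon (U_i)_0\rightarrow G_0$ between the body of $\psi_i$ and the $G_0$-extension of the $G_{0,\R}$-trivialization, and replaces $\psi_i$ by its right translate along $g_i\circ r_i$ before invoking the universal property. Without this correction step the compatibility you assert ``by the choices above'' simply need not hold, and the pullback property does not produce the map $U\times G_\R\rightarrow P_\R|_U$.

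Two smaller remarks: your first paragraph (the $G_\R$-action on $P_\R$ via the universal property, and equivariance of $P_\R\rightarrow M$) agrees with the paper and is fine. But the ``genuine subtlety'' of your last paragraph is misplaced: in this lemma $G_\R$ is by hypothesis a mixed real form, hence a mixed Lie supergroup and in particular representable, so the appeal to Lemma \ref{lem: action underlying real Lie group and mixed group} and the action/$\rho$ correspondence is unnecessary --- the functorial action is already supplied by universality, exactly as you observed at the start. The real issue to address is the trivialization correction described above.
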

\begin{proof}
We observe that $G_\R$ acts on $P_\R$ by the universal property
of the pullback and the map $P_\R\rightarrow P\rightarrow M$ is equivariant
with respect to this action.
So we only need to show local triviality.
We choose trivializations $\psi_i\colon U_i \times G\rightarrow P|_{U_i}$ on coordinate charts $U_i = \A(V_\R)$
on $M.$ They come with retractions $r_i\colon U_i\rightarrow (U_i)_0.$
Without loss of generality, we may assume that ${P_{0, \R}}|_{(U_i)_0}$ is trivial, too,
say by maps $\varphi_i\colon (U_i)_0\times G_{0 \R} \rightarrow {P_{0, \R}}|_{(U_i)_0}.$
The $\varphi_i$ induce trivializations $\tilde{\varphi}_i\colon (U_i)_0 \times G_0\rightarrow {P_{0}}|_{(U_i)_0}$
which differ from $(\psi_i)_0$ by maps $g_i\colon (U_i)_0\rightarrow G_0$
in the sense that
\[
\xymatrix{
 \tilde{\varphi}_i = (\psi_i)_0 \circ ((U_i)_0\times a_0)\circ  ((\id_{(U_i)_0}, g_i)\times G_0)\colon (U_i)_0\times G_0\ar[r] & {P_0}|_{(U_i)_0}.
}
\]
Denoting by $\tilde{a}$ the composition $G_0\times G\rightarrow G\times G\rightarrow G$,
we now set
\[
 \xymatrix{
\tilde{\psi}_i = \psi_i \circ (U_i \times \tilde{a}) \circ (U_i \times g_i \times G)\circ((\id_{U_i}, r_i)\times G)\colon U_i\times G\rightarrow P|_{U_i},
}
\]
which is still a trivialization.
Then $(\tilde{\psi}_i)_0 = \tilde{\varphi}_i$,
and the universal property of the pullback now shows that
$\tilde{\psi}_i$, restricted to $U_i\times G_\R$, gives a trivialization of ${P_\R}|_{U_i}.$
\end{proof}

\subsection{Tangent bundles and frame bundles of mixed supermanifolds}

Suppose $M$ is a mixed supermanifold
locally modelled on the mixed super vector space $(V, V_\R, V_\C).$
The sheaf $\mathcal{T}_M$ is locally free on $V$
and glueing leads to the mixed 
total space $TM\rightarrow M.$
If $i\colon M_0 \rightarrow M$ is the canonical inclusion,
then
\[
i^* TM = TM_{\bar{0}}\oplus TM_{\bar{1}}
\]
for certain complex bundles $(TM)_{\bar{j}}\rightarrow M_0$ (in
the category of mixed manifolds).
Actually, we have $(TM)_{\bar{0}} = TM_0.$

Define $\underline{V}_T = T\times \A(V)\rightarrow T$ to be the trivial
vector bundle over $T$ with fibre $\A(V).$
There is a vector bundle of homomorphisms $\Hom(\underline{V}_M, TM)\rightarrow M$
and the $T$-points of the total space
are given by squares of vector bundles
\begin{gather}
\label{functor of points 1 jets}
 \xymatrix{
\underline{V}_T\ar[r]^{\varphi'}\ar[d] & TM\ar[d]\\
T \ar[r]^f & M.
}
\end{gather}
Equivalently, a $T$-point consists of a tuple $(f, \varphi)$ consisting of a map $f\colon T\rightarrow M$
and a map $\varphi\colon \underline{V}_T\rightarrow f^*(TM)$ of vector bundles over $T.$

Another point of view is the following.
Let $\jet_V^k$ be the $(k+1)$-truncation of the free supersymmetric algebra on $V$:
\begin{gather*}
\jet^k_V := \mathrm{Sym}(V)/ \langle V\rangle^{k+1}.
\end{gather*}
Define $\mathrm{Spec}(\jet^k_V)$ to be the complex superspace with reduced space a point and
$\jet^k_V$ as algebra of functions.
We set $\jet^k_V M := \underline{\mathrm{Hom}}(\mathrm{Spec}(\jet^k_V), M)$,
i.e.
\[
\SMan^\mu(T, \jet^k_V M) = \mathsf{LRS}_\C(T\times \mathrm{Spec}(\jet^k_V), M),
\]
where $\mathsf{LRS}_\C$ denotes the category of locally ringed superspaces over $\C$,
compare \cite[5.4]{AHW}.
The inclusion $0\rightarrow V$ induces a map $\mathrm{Spec}(\jet^k_V)\rightarrow *$
which in turn induces $\iota_M\colon \jet^k_V M\rightarrow M$
by precomposition.
By inspection we have $\jet^1_VM = \Hom(\underline{V}_M, TM)$ over $M.$

The frame bundle of $M$ is the open subsupermanifold of $\Hom(\underline{V}_M, TM)$
characterized by
\[
 L(M)(T) = \{(f, \varphi) \in \Hom(\underline{V}_M, TM)(T)\mid \varphi\ \text{isomorphism}\}. 
\]
In terms of squares: $(f, \varphi)\in L(M)(T)$ if and only if the associated square (\ref{functor of points 1 jets})
is a pullback.
This is a principal $GL(V)$-bundle over $M.$

We have $L(M)_{0} = L(TM_{\bar{0}})\times_M L(TM_{\bar{1}})$,
and thus the mixed structure of $M$
yields
subbundles
\[
{L^\mu(M)_{0, \R}}\rightarrow L^\mu(M)_0 \rightarrow L(M)_0,
\]
where 
$L^\mu(M)_0$ (resp.~$L^\mu(M)_{0, \R}$)
is the subbundle of those frames which map ${V_\C}$ to $TM_\C$
(resp.~$(V_\R, V_\R)$ to $(TM_\R, TM_\C)$).
By pulling back, we obtain the bundles
\[
 \xymatrix{
L^\mu(M)_{(\R)} \ar[r]\ar[d]& r^* {L^\mu(M)_{0, (\R)}} \ar[d]\\ 
L(M) \ar[r] & r^* L(M)_0.
}
\]

The structure group of
${L^\mu(M)}_{(\R)}$ is precisely $GL^\mu(V)_{(\R)}$,
and this functor of frames
is representable precisely
for supermanifolds and complex supermanifolds,
that is, in terms of local models
$V_\C \in \{V_{\bar{1}}, V\}.$

All these  principal bundles
have associated bundles
that fit in a square
\[
\xymatrix{
 TM_\R\ar[r]\ar[d] & r^*T(M_0)_\R\ar[d]\\
 TM \ar[r] & r^*T(M_0),
}
\]
which is a pullback in
view of the pullback square defining $\mathcal{T}_{M, \R}$ in
terms of $\mathcal{T}_M$, $\mathcal{T}_{M_0}$ and $\mathcal{T}_{M_0, \R}.$

\section{Geometric structures on mixed supermanifolds}
\label{sec: Geometric structures on mixed supermanifolds}
We can now define the notion of a geometric structure on a mixed supermanifold.
Let $G\leqslant GL(V)$ be a closed mixed Lie subgroup,
i.e. $G_0^{sm}\leqslant GL(V)_0^{sm}$ is closed
and $G_0\leqslant GL(V)_0$ is a mixed embedding.

\subsection{Basic definitions}
\begin{Def}
A \emph{$G$-structure} on $M$ is a reduction $P$ of $L^\mu(M)_\R$
to $G.$
Equivalently, it is a reduction $P$ of $L(M)$ such that
$P_0\rightarrow L(M)_0$ factors through $L^\mu(M)_{0, \R}.$
\end{Def}

Any $G$-structure $P$ comes with a canonical $1$-form $\vartheta\colon TP\rightarrow \underline{V}_P.$
It sends a pair $(f, X) \in TP(T)$, considered as the data of a map $f = (\pi \circ f, \varphi)\colon T\rightarrow P$ and
a section $X$ of $f^*(TP)$, to the composite
\[
 \xymatrix{
T\ar[r]^-{X} & f^*(TP)\ar[r]^-{f^*(d\pi)} & (\pi\circ f)^*(TM)\ar[r]^-{{\varphi}^{-1}} & \underline{V}_T \ar[r]^{f\times \id_V} & \underline{V}_P.
}
\]
The differential of the canonical $1$-form $\vartheta\colon TP\rightarrow \underline{V}_P$ is a $2$-form
$d\vartheta\colon \Lambda^2 TP \rightarrow \underline{V}_P.$

\begin{Lemma}
\label{lem: infinitesimal invariace}
Let $\mathcal{V}\colon P\times \mathfrak{g}\rightarrow TP$ be the restriction of the differential
of the action $P\times G\rightarrow P.$
For all $A\colon S\rightarrow \underline{\mathfrak{g}}_P$
and $x\colon S\rightarrow TP$ with same underlying map $S\rightarrow P$
we have
\[
\xymatrix{
d\vartheta \circ (\mathcal{V}(A)\wedge x) = - A(\vartheta(x))\colon S\ar[r] & \underline{V}_P.
}\]
\end{Lemma}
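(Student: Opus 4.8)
The plan is to verify the identity locally, by reducing to a computation on the trivial bundle $U \times G$ where $U = \A(V_\R)$ is a coordinate chart on $M$, and then appeal to the functoriality of all the constructions involved. Since the canonical $1$-form $\vartheta$, its differential $d\vartheta$, and the fundamental vector field map $\mathcal{V}$ are all defined by functorial recipes, it suffices to check the formula over each chart; moreover, both sides of the claimed equation are $\mathcal{O}_S$-linear in $A$ and in $x$ (with the appropriate sign bookkeeping for the odd directions), so one may test on generators. The key structural input is the well-known fact that the pullback under the $G$-action of the canonical $1$-form is the adjoint-twisted form, i.e.\ $R_g^*\vartheta = g^{-1}\cdot\vartheta$, which on the Lie-algebra level says that the Lie derivative of $\vartheta$ along the fundamental vector field $\mathcal{V}(A)$ equals $-A\cdot\vartheta$ (interpreting $A$ via the adjoint/defining action of $\mathfrak{g}$ on $\underline V$). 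This is where the hypothesis that $A$ and $x$ have the same underlying map $S \to P$ is used: it guarantees that $\mathcal{V}(A)$ and $x$ can be evaluated at the same base point so that $\mathcal{V}(A)\wedge x$ makes sense as a section of $\Lambda^2 TP$ over $S$.

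The main computation is then the Cartan-type formula for $d\vartheta$ evaluated on a pair consisting of a fundamental vector field and an arbitrary vector field. Concretely, one writes, for vector fields $Y, Z$ on $P$ (or their $S$-point avatars),
\[
d\vartheta(Y \wedge Z) = Y\bigl(\vartheta(Z)\bigr) - (-1)^{|Y||Z|} Z\bigl(\vartheta(Y)\bigr) - \vartheta([Y,Z]),
\]
and specializes to $Y = \mathcal{V}(A)$, $Z = x$. By Lemma \ref{lem: infinitesimal invariace}'s setup $\vartheta(\mathcal{V}(A)) = 0$ (the fundamental vector fields are vertical, and $\vartheta$ kills the vertical tangent bundle by construction, since $f^*(d\pi)$ annihilates them), so the middle term drops. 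It remains to identify
\[
\mathcal{V}(A)\bigl(\vartheta(x)\bigr) - \vartheta\bigl([\mathcal{V}(A), x]\bigr) = -A(\vartheta(x)),
\]
which is precisely the infinitesimal version of the equivariance $R_g^*\vartheta = g^{-1}\cdot\vartheta$: differentiating that relation in $g$ at the identity along $A$ yields exactly the left-hand side equalling $-\mathrm{ad}$-type action of $A$ on $\vartheta$, and on the target $\underline V_P$ the action of $\mathfrak g \subseteq \mathfrak{gl}(V)$ is just $A \mapsto (v \mapsto A\cdot v)$, so $-A(\vartheta(x))$ is the correct right-hand side.

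The step I expect to be the genuine obstacle is making the equivariance relation $R_g^*\vartheta = g^{-1}\cdot\vartheta$ precise and functorial in the $cs$ / mixed setting, together with its infinitesimal counterpart: one must track how the action map $P \times G \to P$ interacts with $d\pi$ and with the trivialization $\varphi^{-1} \colon (\pi\circ f)^*(TM) \to \underline V_T$ appearing in the definition of $\vartheta$, since it is exactly the factor $\varphi$ that changes by right translation under $G$ and produces the $g^{-1}$-twist. Concretely, if $(f,X) \in TP(S)$ with $f = (\pi\circ f, \varphi)$, then acting by $g \in G(S)$ replaces $\varphi$ by $\varphi \circ g$, hence $\varphi^{-1}$ by $g^{-1}\circ\varphi^{-1}$, and differentiating this substitution along $A \colon S \to \underline{\mathfrak g}_P$ is what feeds the $-A(\vartheta(x))$ term. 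Once this substitution bookkeeping is done carefully — using Lemma \ref{lem: family of automorphisms from group action} to organize the odd directions — the sign and the linearity claims follow, and functoriality upgrades the chartwise identity to the global statement. The rest is routine.
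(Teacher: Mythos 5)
Your argument is correct in outline, but it is worth noting that the paper does not prove this lemma at all: it simply cites Proposition 4 of Fujio's \emph{Super G-structures of finite type}, where the super version of this computation is carried out. What you have written is essentially a self-contained reconstruction of that classical argument: verticality of the fundamental vector fields gives $\vartheta(\mathcal{V}(A))=0$, the super Cartan formula reduces $d\vartheta(\mathcal{V}(A)\wedge x)$ to $(\mathfrak{L}_{\mathcal{V}(A)}\vartheta)(x)$, and the equivariance $R_g^*\vartheta = g^{-1}\cdot\vartheta$ (which, as you correctly isolate, comes from the fact that the right action replaces the frame $\varphi$ by $\varphi\circ g$ in the definition of $\vartheta$) differentiates to $\mathfrak{L}_{\mathcal{V}(A)}\vartheta = -A\cdot\vartheta$. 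The comparison, then, is between outsourcing the computation (the paper's choice, which keeps the exposition short and leaves the sign conventions to Fujio) and doing it directly (your choice, which buys self-containedness but obliges you to fill in two routine points you currently pass over quickly): first, $A$ and $x$ are $S$-points of $\underline{\mathfrak{g}}_P$ and $TP$, not vector fields on $P$, so to invoke the Cartan formula and the bracket $[\mathcal{V}(A),x]$ you must either extend $x$ to a vector field near the image of $S$ and check the result is independent of the extension (both sides being tensorial, i.e.\ $\mathcal{O}_S$-linear, in $x$), or phrase the Lie-derivative step entirely in the functor-of-points language; second, the super Cartan formula and the sign $(-1)^{|Y||Z|}$ must be pinned to a fixed convention for $V$-valued forms of mixed parity, which is exactly the bookkeeping Fujio's proposition handles. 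Neither point is an obstruction, so your route is a valid, more elementary substitute for the citation.
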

\begin{proof}
This is Proposition $4$ in \cite{Fujio}.
\end{proof}

\begin{Rmk}
Although we will make no use of it, we remark that, in analogy with the usual definition,
one can define a $G$-structure to be \emph{flat} if $M$ can be covered by coordinate charts
$U_i\cong V$ such that the square determined by the coordinates
\[
\xymatrix{
\underline{V}_{U_i}\ar[r]\ar[d] & TM\ar[d]\\
U_i\ar[r]^{\subseteq} & M,
}
\]
which is contained in $L(M)(U_i)$, lies in $P(U_i).$
\end{Rmk}

\subsection{Prolongation}

\subsubsection{Unrestricted prolongation}
Adapting the classical construction \cite{Kobayashi}, we
will in this subsection associate with a $G$-structure $P$ on $M$
a tower of prolongations
\[
 \xymatrix{
\cdots \ar[r] &P^{(k)}\ar[r] & P^{(k-1)} \ar[r] & \cdots \ar[r] & P^{(1)}\ar[r] & P^{(0)} = P \ar[r] & M,
}
\]
where $P^{(i+1)}\rightarrow P^{(i)}$ is a reduction of $L(P^{(i)})$ to $G^{(i+1)}.$
Here
$G^{(0)} = G$ and
$G^{(i)}$ is a vector group for all $i\geq 1.$
\begin{Rmk}
Given a super vector space, the associated supergroup structure on $\A(V)$
will be denoted by $V.$
More generally, if a Lie supergroup $G$ acts linearly on a complex super vector space $V$,
then the associated semi-direct product will be denoted by $G\ltimes V$ instead of $G\ltimes \A(V).$
\end{Rmk}

It will be convenient to introduce a name for the representation of $G$ on $V$: $\alpha\colon G\rightarrow GL(V).$
Applying $\jet^1_V(-)$ to $G\rightarrow P\rightarrow M$ yields a principal $\jet^1_VG$-bundle
$\jet^1_VG\rightarrow \jet^1_VP \rightarrow \jet^1_V M$ and the usual identification
$TG \cong G\ltimes_{\mathrm{ad}} \mathfrak{g}$
gives an isomorphism of groups $\jet^1_V(G)\cong G\ltimes_{\mathrm{ad}} \uHom(V, \mathfrak{g})$,
where $G$ acts via its adjoint representation on $\mathfrak{g}.$
The bundle of horizontal frames is defined by the pullback
\[
\xymatrix{
\Hor \ar[r]\ar[d]^{d\pi_*} & \jet_V^1P\ar[d]\\
P \ar[r] & \jet_V^1M.
}
\]
Its $S$-points are the squares
\[
\xymatrix{
\underline{V}_S\ar[r]^{h}\ar[d] & TP\ar[d]\\
S \ar[r]^f & P
}
\]
such that the composite square 
\[
\xymatrix{
\underline{V}_S\ar[r]\ar[d] & TP\ar[d]\ar[r] & TM\ar[d]\\
S \ar[r] & P \ar[r] & M
}
\]
lies in $P(S).$
Moreover, $\Hor$ is the total space
of a principal $G\ltimes_{\mathrm{ad}} \underline{\mathrm{Hom}}(V, \mathfrak{g})$-bundle
with respect to the map $d\pi_*.$
We need to construct an action of $G\ltimes_{\mathrm{\alpha}} \underline{\mathrm{Hom}}(V, \mathfrak{g}).$
The group $G$ acts via $\alpha$ on $\jet^1_V(P)$ by precomposition.
Together with the action of $\underline{\mathrm{Hom}}(V, \mathfrak{g})\leqslant \jet_V^1(G)$,
this yields an action of $G\ltimes_\alpha\underline{\mathrm{Hom}}(V, \mathfrak{g})$ on $\jet_V^1(P)$,
which restricts to an action on $\Hor.$
The composition $\iota_P\colon\Hor \rightarrow \jet_V^1P\rightarrow P$
is equivariant if we let act $G\ltimes_\alpha\underline{\mathrm{Hom}}(V, \mathfrak{g})$
trivially on $P.$
Moreover, $d\pi_*$ is equivariant with respect to this action if we let act $G\ltimes_\alpha\underline{\mathrm{Hom}}(V, \mathfrak{g})$
on $P$ via the projection to $G.$

The canonical vertical distribution
$\mathcal{V}\colon \underline{\mathfrak{g}}_P\rightarrow TP$ gives rise to a map
$\jet_V^1 P\rightarrow \jet_{V\oplus\mathfrak{g}}^1 P$
and the composition $\Hor\rightarrow \jet_{V\oplus\mathfrak{g}}^1 P$
factors through $L(P).$
Moreover, the $GL(V\oplus \mathfrak{g})$-action
on $L(P)$ is seen to restrict to the action of 
$G\ltimes_\alpha  \underline{\mathrm{Hom}}(V, \mathfrak{g})\leqslant GL(V\oplus \mathfrak{g}).$
This identifies $\iota_P\colon \Hor\rightarrow P$ as a
reduction of $L(P)$ to $G\ltimes_\alpha \uHom(V, \mathfrak{g}).$

As usual, $\mathfrak{g}^{(1)}$ is defined to be the kernel of
the super-antisymmetrizer $\partial\colon \underline{\Hom}(V, \mathfrak{g})\rightarrow \underline{\Hom}(V\otimes V, V)\rightarrow \underline{\Hom}(\Lambda^2 V, V)$,
\[
 (\partial S)(v, w)  := \frac{1}{2}( S(v)(w) - (-1)^{|v||w|} S(w)(v)),
\]
and the first prolongation $P^{(1)}\rightarrow P$ is obtained from $\Hor\rightarrow P$ by two successive reductions
of the structure group to $\mathfrak{g}^{(1)}$ using the following lemma.
\begin{Lemma}
\label{lem: reduction}
Consider a short exact sequence of mixed Lie supergroups
\[
\xymatrix{
1 \ar[r] & H \ar[r] & G \ar[r] & K \ar[r] & 1.  
}
\]
Let $\pi\colon P\rightarrow B$ be a $G$-principal bundle and assume that there
is a $G$-equivariant map $f\colon P\rightarrow K.$
Then $P/H \rightarrow B$ is a principal $K$-bundle and as such isomorphic
to the trivial bundle. Moreover, the map $(\pi, f)\colon P\rightarrow B\times K$
is a principal $H$-bundle.
\end{Lemma}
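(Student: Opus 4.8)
The plan is to exhibit $(\pi,f)$ as the composite of the quotient map $q\colon P\rightarrow P/H$ with an isomorphism $P/H\cong B\times K$ of principal $K$-bundles, and then to deduce both statements from this factorisation.

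First I would set up $q\colon P\rightarrow P/H$. Since $H$ is a closed normal subgroup of $G$, the quotient map $G\rightarrow G/H=K$ is a principal $H$-bundle of mixed Lie supergroups; this is the mixed counterpart of the classical fact and follows from the super-pair description of $H$, $G$ and $K$ exactly as in \cite[7.4]{CCF}. Choosing trivialisations $\psi_i\colon U_i\times G\rightarrow P|_{U_i}$ of $P$, I define $(P/H)|_{U_i}:=U_i\times K$ together with the canonical maps $U_i\times G\rightarrow U_i\times K$ and $U_i\times K\rightarrow U_i$, and glue these patches along the transition cocycle of $P$ composed with $G\rightarrow K$. This produces a functor $P/H$ on $\SMan^\mu$ carrying a right $K$-action, together with morphisms $q\colon P\rightarrow P/H$ and $\pi'\colon P/H\rightarrow B$; by construction $q$ is, locally over $B$, the base change of $G\rightarrow K$, and $\pi'$ is locally the projection $U_i\times K\rightarrow U_i$, so $q$ is a principal $H$-bundle and $\pi'$ a principal $K$-bundle (the residual $K$-action being well defined precisely because $H$ is normal). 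In particular $P/H\rightarrow B$ is a principal $K$-bundle.

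Next I would use $f$ to trivialise it. As $H=\ker(G\rightarrow K)$, the map $f$ is $H$-invariant, hence factors uniquely as $f=\bar f\circ q$ for a morphism $\bar f\colon P/H\rightarrow K$, and $\bar f$ is $K$-equivariant because $f$ is $G$-equivariant. Therefore $(\pi',\bar f)\colon P/H\rightarrow B\times K$ is a morphism of principal $K$-bundles over $B$ (with $K$ acting on the second factor by right translation), and any morphism of principal bundles over a common base is an isomorphism, as one sees locally, where it takes the form $(u,k)\mapsto(u,c_i(u)k)$ with inverse $(u,k)\mapsto(u,c_i(u)^{-1}k)$. This proves that $P/H$ is isomorphic to the trivial bundle.

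Finally, unwinding the definitions, the composite $(\pi',\bar f)\circ q$ sends a point $p$ of $P$ to $(\pi'(q(p)),\bar f(q(p)))=(\pi(p),f(p))$, so it equals $(\pi,f)$. Being the composite of the principal $H$-bundle $q$ with an isomorphism, $(\pi,f)\colon P\rightarrow B\times K$ is itself a principal $H$-bundle, which is the remaining claim. I expect the only genuine work to lie in the second paragraph: verifying that $P/H$ is well defined independently of the chosen trivialisations, and that $q$ and $\pi'$ satisfy the local-triviality axiom for bundles over objects of $\SMan^\mu$ — in particular the input that $G\rightarrow K$ is a principal $H$-bundle in the mixed category. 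Once that is in place, everything else reduces to routine torsor manipulations.
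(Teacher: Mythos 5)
Your proof is correct and takes essentially the same approach as the paper: the key step in both is to descend the $H$-invariant map $(\pi,f)$ to a $K$-equivariant map $P/H\rightarrow B\times K$ over $B$ and invoke the fact that any morphism of principal bundles over a common base is an isomorphism. You merely make explicit what the paper leaves implicit, namely the local construction of $P/H$ from trivialisations of $P$ and the identification $(\pi,f)=(\pi',\bar f)\circ q$ giving the principal $H$-bundle statement.
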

\begin{proof}
Since any map of principal bundles is an isomorphism,
it suffices to construct a $K$-equivariant map $P/H\rightarrow B\times K$ over $B.$
But such a map can be constructed from the $G$-equivariant map $(\pi, f)\colon P\rightarrow B\times K$
since $H$ acts trivially on the target.
\end{proof}

The first step is a reduction to $\uHom(V, \mathfrak{g})\leqslant G\ltimes_\alpha \uHom(V, \mathfrak{g}).$
We have two maps $d\pi_*$, $\iota_P\colon\Hor\rightarrow P$
over the same map to the base $M.$
Fibrewise comparison yields a map $d\colon \Hor\rightarrow G.$
It follows now from the equivariance properties of $d\pi_*$ and $\iota_P$ that $d$
is $G\ltimes_\alpha\uHom(V, \mathfrak{g})$-equivariant
if we let this group act from the right on $G$ by $g\cdot (g', \varphi) = (g')^{-1}g.$
Now we can apply Lemma \ref{lem: reduction}
and see that $(\iota_P, d)\colon \Hor\rightarrow P\times G$
is a principal $\underline{\mathrm{Hom}}(V, \mathfrak{g})$-bundle.
Pulling back along the inclusion $P\times \{1\}\hookrightarrow P\times G$
yields the bundle of \emph{compatible horizontal frames} $\cHor\rightarrow P$,
a reduction of $L(P)$ to the group $\underline{\mathrm{Hom}}(V, \mathfrak{g}).$
Its $S$-points consist of those squares $(f, h)$ such that $T(\pi)\circ h = f\in P(S).$

The second reduction is a little bit more elaborate.
For a section $v\colon T\rightarrow \underline{V}_T$ and a map $f\colon T\rightarrow P$, we will
use the shorthand
$v_f:=  (f\times \A(V))\circ v\colon T\rightarrow \underline{V}_P.$
\begin{Lemma}
\label{lem: dtheta inner product}
For all compatible horizontal frames $(f, h)\in \cHor(T)$ and all sections
$x\colon T\rightarrow \underline{V}_T$, we have $\vartheta(h(x)) = x_f$:
\[
 \xymatrix{
 T \ar[r]^-{x}\ar[rrd]_{x_f} & \underline{V}_T \ar[r]^{h} & TP\ar[d]^\vartheta\\ 
 & & \underline{V}_P.
}
\]
\end{Lemma}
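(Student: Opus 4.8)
The plan is to prove the identity by unwinding the definitions of $\vartheta$, of the bundle $\cHor$, and of the abbreviation $x_f$; once these are lined up the statement reduces to cancelling $\varphi^{-1}$ against $\varphi$.

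First I would fix $(f,h)\in\cHor(T)$ and make the membership condition explicit. Write $f=(\pi\circ f,\varphi)$, so that $\varphi\colon\underline{V}_T\to(\pi\circ f)^{*}(TM)$ is the vector bundle isomorphism over $T$ attached to $f$ by the inclusion $P\subseteq L(M)$; this is exactly the map denoted $\varphi$ in the definition of the canonical $1$-form. Recall that $\cHor$ was obtained by pulling back $(\iota_P,d)\colon\Hor\to P\times G$ along $P\times\{1\}\hookrightarrow P\times G$, where $d$ compares the two maps $\iota_P,d\pi_{*}\colon\Hor\to P$ lying over the same map to $M$; hence $(f,h)\in\cHor(T)$ amounts to $d\pi_{*}(f,h)=\iota_P(f,h)=f$ in $P(T)$, which, read off on frames, is the equation
\[
f^{*}(d\pi)\circ h=\varphi\colon\underline{V}_T\longrightarrow(\pi\circ f)^{*}(TM).
\]
This is precisely the description of the $S$-points of $\cHor$ recorded after its definition.

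Next, for a section $x\colon T\to\underline{V}_T$, I would read $h(x)=h\circ x\colon T\to TP$ as a $T$-point of $TP$ whose underlying map to $P$ is $f$ (the map $h$ covers $f$, so $h\circ x$ factors through $f^{*}(TP)$). Feeding this point into the definition of $\vartheta$, with the tangent section there taken to be $h\circ x$ and the frame there taken to be the $\varphi$ just fixed, gives
\[
\vartheta(h(x))=(f\times\id_V)\circ\varphi^{-1}\circ f^{*}(d\pi)\circ h\circ x.
\]
Substituting $f^{*}(d\pi)\circ h=\varphi$ collapses $\varphi^{-1}\circ f^{*}(d\pi)\circ h$ to $\id_{\underline{V}_T}$, leaving $\vartheta(h(x))=(f\times\id_V)\circ x=x_f$. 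By naturality of $\vartheta$ and of the bundle maps involved, checking the identity on this tautological $T$-point suffices.

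I do not anticipate a genuine obstacle: the lemma is bookkeeping. The only care needed is to confirm that the isomorphism $\varphi$ extracted from $f$ in the definition of $\vartheta$ is literally the map entering the compatibility condition that cuts $\cHor$ out of $\Hor$, and that $h(x)$ is interpreted as a point of $TP$ over $f$ rather than over $\pi\circ f$ or over $d\pi_{*}(f,h)$. Once those identifications are in place, the cancellation $\varphi^{-1}\varphi=\id$ is the entire content.
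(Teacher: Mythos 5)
Your unwinding is correct and is exactly the argument the paper has in mind: the paper's proof is just ``this follows immediately from the definition,'' and your identification of the compatibility condition $f^{*}(d\pi)\circ h=\varphi$ followed by the cancellation $\varphi^{-1}\circ\varphi=\id$ is precisely that definition-chase. No gaps; the only care points you flag (that $h(x)$ lies over $f$ and that the $\varphi$ in $\vartheta$ is the frame cut out by the $\cHor$ condition) are handled correctly.
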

\begin{proof}
This follows immediately from the definition.
\end{proof}

Consider $(f, h)\in \cHor(S).$ The \emph{torsion} is defined to be
the composition
\[
\entrymodifiers={+!!<0pt,\fontdimen22\textfont2>}
\xymatrix{
 c(f, h)\colon \Lambda^2 \underline{V}_S\ar[r]^-{\Lambda^2h} & \Lambda^2 TP \ar[r]^{d\vartheta} & \underline{V}_P.
}
\]
Equivalently, it is given by a map
\[
\xymatrix{
c'(f, h)\colon S\ar[r] & \underline{\mathrm{Hom}}(\Lambda^2 V, V) .
}
\]
By naturality, we obtain a map $c\colon \cHor\rightarrow \uHom(\Lambda^2 V, V).$
Now consider two distinguished squares over $f$ with horizontal parts $h$ and $h'.$
As $\cHor\rightarrow P$ is a principal $\uHom(V, \mathfrak{g})$-bundle
there is a unique map $S_{(f, h'), (f, h)}\colon \underline{V}_S\rightarrow \underline{\mathfrak{g}}_P$
over $f$ such that $h' = h + \mathcal{V}\circ S_{(f, h'), (f, h)}.$ 
By adjointness this can be viewed as a map $S_{(f, h'), (f, h)}'\colon S\rightarrow \uHom(V, \mathfrak{g}).$
Then, by Lemmas \ref{lem: infinitesimal invariace} and \ref{lem: dtheta inner product}, we have that for any two sections $v, w\colon S\rightarrow \underline{V}_S$
\begin{align*}
 c(f, h')( v\wedge w) &- c(f, h)(v\wedge w) = d\vartheta \circ h'(v)\wedge h'(w) - d\vartheta \circ h(v)\wedge h(w)\\
                                        & = d\vartheta \circ (h'(v)- h(v))\wedge h'(w) + d\vartheta\circ h(v)\wedge (h'(w)- h(w))\\
					& = d\vartheta \circ (\mathcal{V}\circ S_{(f, h'), (f, h)}(v))\wedge h'(w) + d\vartheta \circ h(v)\wedge (\mathcal{V}\circ S_{(f, h'), (f, h)}(w))\\
					& = -S_{(f, h'), (f, h)}(v)(\vartheta(h'(w))) - d\vartheta \circ (\mathcal{V}\circ S_{(f, h'), (f, h)}(w))\wedge h(v)\\
					& = -S_{(f, h'), (f, h)}(v)(\vartheta(h'(w))) + S_{(f, h'), (f, h)}(w)(\vartheta(h(v)))\\
					& = -S_{(f, h'), (f, h)}(v)(w_f) + S_{(f, h'), (f, h)}(w)(v_f).
\end{align*}

In other words,
\begin{gather*}
c'(f, h') - c'(f, h) = -2 \partial S_{(f, h), (f, h')}'
\end{gather*}
and if we let $\uHom(V, \mathfrak{g})$ act on $\uHom(\Lambda^2V, V)$
via  $(-2)\partial$, then
$c\colon \cHor\rightarrow \uHom(\Lambda^2 V, V)$ is $\uHom(V, \mathfrak{g})$-equivariant.
Now, we have the exact sequence
\[
\xymatrix{
0 \ar[r]& \mathfrak{g}^{(1)} \ar[r]& \uHom(V, \mathfrak{g}) \ar[r]^-\partial & \uHom(\Lambda^2 V, V) \ar[r] & H^{0, 2}(V, \mathfrak{g}) \ar[r] & 0.
}
\]
Consequently, any splitting $s$ of $\im(\partial)\rightarrow \uHom(\Lambda^2V, V)$
gives rise to an equivariant map $\cHor\rightarrow \im(\partial)$
and Lemma \ref{lem: reduction} applied to the short exact sequence
\[
\xymatrix{
0 \ar[r]& \mathfrak{g}^{(1)} \ar[r]& \uHom(V, \mathfrak{g}) \ar[r]^-\partial & \im(\partial) \ar[r] &  0
} 
\]
shows that $\cHor \rightarrow P\times \im(\partial)$ is a principal $\mathfrak{g}^{(1)}$-bundle.
Finally, by pulling back along $P\times \{0\}\rightarrow P\times \im(\partial)$
one obtains the \emph{first prolongation} $P^{(1)}\rightarrow P$, a reduction of $L(P)$
to $\mathfrak{g}^{(1)}$
which consists of those compatible horizontal frames with torsion contained in $C := \ker (s).$

The higher prolongations are now defined inductively: $P^{(i+1)} := (P^{(i)})^{(1)}.$
Setting $\mathfrak{g}^{(-1)} := V$ and $\mathfrak{g}^{(0)} := \mathfrak{g}$, we arrive at
the following inductive description of $\mathfrak{g}^{(k)}$ for $k\geq 1$:
\[
\mathfrak{g}^{(k)} = \{X\in \underline{\mathrm{Hom}}(\mathfrak{g}^{(-1)}, \mathfrak{g}^{(k-1)})\mid X(v)(w) = (-1)^{|v||w|} X(w)(v)\ \text{for all homog.}\ v,\ w\}.
\]

By inspection, we have
\[
(\mathfrak{g}^{(1)})_{\bar{0}}\subseteq (\underline{\mathrm{Hom}}(V, \mathfrak{g})_{\bar{0}})^{\mu}\subseteq \underline{\mathrm{Hom}}(V, \mathfrak{g})_{\bar{0}},
\]
i.e. any $f\in ({\mathfrak{g}^{(1)}})_{\bar{0}}$ satisfies $f({V_{\C}})\subseteq \mathfrak{g}_{\C}.$
This implies that $P^{(k)}\subseteq L^\mu(P^{(k-1)}).$

\subsubsection{The real prolongation}
The prolongations $P^{(k+1)}\rightarrow P^{(k)}$ defined so far
only provide reductions of $L^{\mu}(P^{(k-1)}).$
To prove representability for the functor of automorphisms
of a $G$-structure of finite type, we
need to single out the \emph{real prolongation} which provides a reduction of $L^{\mu}(P^{(k-1)})_\R.$
For this to be possible, we need to impose a condition on the $G$-structure.

To that end, consider the subspaces
\[
\xymatrix{
(\underline{\mathrm{Hom}}(V, \mathfrak{g})_{\bar{0}})^{\mu}_\R\subseteq (\underline{\mathrm{Hom}}(V, \mathfrak{g})_{\bar{0}})^{\mu}\subseteq \underline{\mathrm{Hom}}(V, \mathfrak{g})_{\bar{0}}
}
\]
consisting of even linear maps $f$
satisfying $f(V_\C)\subseteq \mathfrak{g}_\C$
or $f(V_\R, V_\C)\subseteq (\mathfrak{g}_\R, \mathfrak{g}_\C),$
respectively.

Recall the bundle of compatible horizontal frames with the map
$\mathcal{CH}\rightarrow P \times \mathrm{im}(\partial).$
One readily constructs
$(\mathcal{CH}_0)^{\mu}_\R\subseteq (\mathcal{CH}_0)^{\mu}\subseteq \mathcal{CH}_0$
with structure groups $(\underline{\mathrm{Hom}}(V, \mathfrak{g})_{\bar{0}})^{\mu}_{(\R)}.$
Pullback along the inclusion $P_0\times \{0\}\rightarrow P_0\times \im(\partial)_{\bar{0}}$
yields $(P_0^{(1)})^\mu_\R\subseteq (P^{(1)}_0)^\mu\subseteq P^{(1)}_0$
with structure groups given by the pullback
\[
 \xymatrix{
(\mathfrak{g}^{(1)})_{\bar{0}, \R}\ar[r]\ar[d]     & (\underline{\mathrm{Hom}}(V, \mathfrak{g})_{\bar{0}})^{\mu}_{(\R)}\ar[d]\\
(\mathfrak{g}^{(1)})_{\bar{0}} \ar[r] & \underline{\mathrm{Hom}}(V, \mathfrak{g})_{\bar{0}},
}
\]
where once again, it is understood that the quantities in parentheses are only present
for the case of $(P_0^{(1)})^\mu_\R.$
Inductively, we obtain
$(P_0^{(k)})^\mu_\R\subseteq (P^{(k)}_0)^\mu\subseteq P^{(k)}_0$
with structure groups given by the pullback
\[
 \xymatrix{
(\mathfrak{g}^{(k)})_{\bar{0}, \R}\ar[d]\ar[r]     & (\underline{\mathrm{Hom}}(V, \mathfrak{g}^{(k-1)})_{\bar{0}})^{\mu}_{(\R)}\ar[d]\\
(\mathfrak{g}^{(k)})_{\bar{0}} \ar[r] & \underline{\mathrm{Hom}}(V, \mathfrak{g}^{(k-1)})_{\bar{0}}.
}
\]

\begin{Def}
A $G$-structure is called \emph{admissible} if, for all $k\geq 0$,
$(\mathfrak{g}^{(k)})_{\bar{0}, \R}$ defines a mixed structure
on $(\mathfrak{g}^{(k)})_{\bar{0}}.$
\end{Def}

Assume now that the $G$-structure is admissible.
Since $(\mathfrak{g}^{(1)})_{\bar{0}}\subseteq (\underline{\mathrm{Hom}}(V,\mathfrak{g})_{\bar{0}})^\mu$,
we have that $(P_0^{(1)})^\mu = P_0^{(1)}$
and the structure group of $(P^{(1)}_0)^\mu_\R = P^{(1)}_{0, \R}$
is by definition $(\mathfrak{g}^{(1)})_{\bar{0}, \R}.$
Pulling back $r^*P^{(1)}_{0, \R}\rightarrow r^*P^{(1)}_0$ along $P^{(1)}\rightarrow r^* P^{(1)}_0$
gives the functor
$P^{(1)}_\R$, which is representable in view of Lemma \ref{lem: real forms of principal bundles}
and the assumption on the $G$-structure.
All in all, this yields the real prolongation:
\[
 \xymatrix{
\cdots \ar[r] &P_\R^{(k)}\ar[r] & P_\R^{(k-1)} \ar[r] & \cdots \ar[r] & P_\R^{(1)}\ar[r] & P_\R^{(0)}=P \ar[r] & M.
}
\]
The structure group of the $k$th real prolongation will be denoted by $G_\R^{(k)}$.

\section{Automorphisms of $G$-structures}
\label{sec: Automorphisms of G - structures}

The main object of study in this paper is the functor of automorphisms of a $G$-structure,
which we presently define. 

\subsection{The functor of automorphisms of a $G$-structure}
Let $M$ be a mixed supermanifold.
An automorphism $f\colon S\times M\rightarrow S\times M$ over $S$ is called an \emph{$S$-family
of automorphisms of $M.$} Such morphisms assemble to a functor $\underline{\mathrm{Diff}}(M)$
given by
\[
 \underline{\mathrm{Diff}}(M)(S) = \{f\colon S\times M\rightarrow S\times M\mid f\ \text{an $S$-family
of automorphisms of $M$}\}.
\]

Moreover, for any Lie supergroup $G$ and any principal $G$-bundle $P\rightarrow M$,
we let $\underline{\mathrm{Diff}}(P)^G\subseteq \underline{\mathrm{Diff}}(P)$ be the subfunctor of equivariant automorphisms, i.e.
\[
 \underline{\mathrm{Diff}}(P)^G(S) = \{f\in \underline{\mathrm{Diff}}(P)(S)\mid f\ G\text{-equivariant}\}.
\]

Note that if $P$ is a $G$-structure, then inducing up from $G$ to $GL(V)$ gives a map
$\underline{\mathrm{Diff}}(P)^G\rightarrow \underline{\mathrm{Diff}}(L(M))^{GL(V)}$
and, moreover, the differential induces an inclusion of functors
$L(-)\colon \underline{\mathrm{Diff}}(M) \rightarrow \underline{\mathrm{Diff}}(L(M))^{GL(V)}.$

\begin{Def}
The functor of automorphisms of a $G$-structure $P$ on $M$ is defined to be the pullback
\[
\xymatrix{
 \underline{\mathrm{Aut}}(P) \ar[r]\ar[d] & \underline{\mathrm{Diff}}(M)\ar[d]\\
 \underline{\mathrm{Diff}}(P)^G \ar[r] & \underline{\mathrm{Diff}}(L(M))^{GL(V)}.
}
\]
An $S$-point of $\underline{\mathrm{Aut}}(P)$ is called an
\emph{$S$-family of automorphisms of $P$}.
\end{Def}

\begin{Def}
A homogeneous vector field
$\mathcal{O}_{M}\rightarrow ({p_S}_0)_*\mathcal{O}_{S\times M}$
along $p_{S}\colon S\times M\rightarrow M$ is called an
\emph{$S$-family of infinitesimal automorphisms of $P$} if
the induced vector field
$\mathcal{O}_{L(M)}\rightarrow ({p_S}_0)_*\mathcal{O}_{S\times L(M)}$
extends to $\mathcal{O}_{P}\rightarrow ({p_S}_0)_*\mathcal{O}_{S\times P}.$
For $S = *$ this yields the Lie superalgebra
$\mathfrak{aut}(P)\subseteq \Gamma(\mathcal{T}_M)$
of infinitesimal automorphisms of $P.$
The even part has a real subalgebra defined
by
$\mathfrak{aut}(P)_{\bar{0}, \R}:=\mathfrak{aut}(P)_{\bar{0}}\cap \Gamma(\mathcal{T}_{M, \R})$
\end{Def}
\begin{Rmk}
There is no reason for $\mathfrak{aut}(P)_{\bar{0}, \R}$
to be a mixed real form or even a real form of $\mathfrak{aut}(P)_{\bar{0}}.$
For instance, on a purely odd supermanifold all vector fields are real.
The latter would be a necessary condition for the automorphism group to be
representable by a Lie supergroup.
For this reason automorphism groups of $G$-structures are generically
mixed supermanifolds.
\end{Rmk}

In analogy with Lemma \ref{lem: family of automorphisms from group action}, one sees that
any $\varphi\in\underline{\mathrm{Diff}}(M)(\A(\C^{0|n})\times T)$, where $T$ is a mixed
supermanifold, can be uniquely written as
\[
 \varphi^\sharp = \prod_{k = n}^1 \Big(1 + \sum_{k\in I\subseteq \{1, \ldots, k\}} \eta^I X_I\Big) \cdot \varphi_0^\sharp
\]
where $X_I$ are vector fields along $p_{T}$ of degree $|I|$ and $\varphi_0\in\underline{\mathrm{Diff}}(M)(T).$
\begin{Lemma}
\label{lem: family of automorphisms}
Consider $\varphi\in \underline{\mathrm{Diff}}(M)(\A(\C^{0|n})\times T).$
Then $\varphi\in \underline{\mathrm{Aut}}(P)(\A(\C^{0|n})\times T)$ if and only if
$\varphi_0\in \underline{\mathrm{Aut}}(P)(T)$ and all $X_I$ are $T$-families of
infinitesimal automorphisms of $P.$
\end{Lemma}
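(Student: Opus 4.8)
The plan is to reduce the statement to two things: first, the fact that the prolongation construction $P \mapsto L(M)$ and the passage to $P$ are natural in $M$ (so that an $S$-family of automorphisms induces an $S$-family of automorphisms of each bundle in the tower), and second, the explicit product formula for $\varphi^\sharp$ established just before the lemma. Since $\underline{\mathrm{Aut}}(P)$ is defined as a pullback, to say $\varphi \in \underline{\mathrm{Aut}}(P)(\A(\C^{0|n})\times T)$ is by definition to say that $\varphi$, viewed inside $\underline{\mathrm{Diff}}(L(M))^{GL(V)}(\A(\C^{0|n})\times T)$ via $L(-)$, lies in the image of $\underline{\mathrm{Diff}}(P)^G(\A(\C^{0|n})\times T)$; i.e. the induced equivariant automorphism of $L(M)$ restricts to (equivalently: preserves) the sub-$G$-bundle $P$. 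So the task is: translate "$L(\varphi)$ preserves $P$" into conditions on $\varphi_0$ and the $X_I$.

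First I would use the compatibility of $L(-)$ with the product decomposition. Applying the (evident, by the same inductive argument as in Lemma \ref{lem: family of automorphisms from group action}) naturality of $L(-)$ and of the map to $\underline{\mathrm{Diff}}(L(M))^{GL(V)}$, the decomposition
\[
 \varphi^\sharp = \prod_{k = n}^1 \Big(1 + \sum_{k\in I\subseteq \{1, \ldots, k\}} \eta^I X_I\Big) \cdot \varphi_0^\sharp
\]
is carried to the corresponding decomposition of $L(\varphi)^\sharp$ with $\varphi_0$ replaced by $L(\varphi_0)$ and each $X_I$ replaced by the induced (prolonged) vector field $L(X_I)$ along $p_{L(M)}$, since $L(-)$ sends a vector field on $M$ to its prolongation on $L(M)$ and sends the nilpotent coefficients $\eta^I$ through unchanged (they come from the $\A(\C^{0|n})$-factor, on which $L(-)$ does nothing). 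This reduces everything to the statement: the automorphism of $L(M)$ with sheaf map $\prod_{k=n}^1(1 + \sum \eta^I L(X_I))\cdot L(\varphi_0)^\sharp$ preserves the subspace $P \hookrightarrow L(M)$ if and only if $L(\varphi_0)$ preserves $P$ and each $L(X_I)$ is tangent to $P$ — which is exactly the assertion that $\varphi_0 \in \underline{\mathrm{Aut}}(P)(T)$ and each $X_I$ is a $T$-family of infinitesimal automorphisms of $P$.

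The remaining point is the purely formal "a morphism of this product shape preserves a sub-supermanifold iff its body part does and each coefficient derivation is tangent". I would prove this by induction on $n$, exactly along the lines of Lemma \ref{lem: family of automorphisms from group action}: peeling off the outermost factor $(1 + \sum_{n \in I} \eta^n L(X_I))$, one checks that composing a $P$-preserving automorphism with $1 + \eta^I X$ yields a $P$-preserving automorphism precisely when $X$ restricts to a vector field on $P$, because the ideal defining $P \subseteq L(M)$ (locally, the ideal cut out by the extra coordinates normal to $P$, together with the $\eta$'s squaring to zero) is preserved by $1 + \eta^I X$ iff $X$ maps that ideal into itself modulo the nilpotents already present — and the cross terms among distinct $\eta^I$'s vanish by anticommutativity. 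I expect this induction to be the only real content; the genuine obstacle, such as it is, is bookkeeping: making sure the "induced/extends" language in the definition of $\underline{\mathrm{Aut}}(P)$ and of $S$-families of infinitesimal automorphisms is tracked correctly through the pullback defining $\underline{\mathrm{Aut}}(P)$, so that "$L(X_I)$ tangent to $P$" is literally unwound to "$X_I$ is a $T$-family of infinitesimal automorphisms of $P$" as in the Definition preceding the lemma. Once that dictionary is fixed, the equivalence is immediate from the inductive step.
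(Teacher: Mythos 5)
Your proposal is correct and takes essentially the same route as the paper: sufficiency because each factor in the unique decomposition preserves $P$, and necessity by restricting along $T\rightarrow \A(\C^{0|n})\times T$ and inducting on $n$, using that $L(-)$ carries the decomposition of $\varphi$ to the corresponding decomposition of $L(\varphi)$ with the induced vector fields as coefficients. The only adjustment needed is in your local justification of the inductive step: because of the reality conditions on $P_0$, the reduction $P\subseteq L(M)$ is not cut out by an ideal of (partially holomorphic) functions in the mixed setting, so the tangency criterion should be phrased as extension of the coefficient derivations to $\mathcal{O}_P$ (equivalently, decompose the corresponding element of $\underline{\mathrm{Diff}}(P)^G$ and compare coefficients via uniqueness of the decomposition), exactly as in the paper's definition of a $T$-family of infinitesimal automorphisms.
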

\begin{proof}
The condition is clearly sufficient.
So, assume that $\varphi$ is an $\A(\C^{0|n})\times T$-family of automorphisms
of $P$. Then $\varphi_0$ is a T-family of such automorphisms since it is obtained by restricting along the inclusion
$T\rightarrow \A(\C^{0|n})\times T.$
Now one proceeds by induction on
$n$ to show that all $X_I$ are infinitesimal automorphisms of $P.$
\end{proof}

\subsection{Prolongation of automorphisms of $G$-structures}
\begin{Prop}
Let $P$ be an admissible $G$-structure on the
mixed supermanifold $M.$
There is a natural inclusion of group-valued functors $\uAut(P)\rightarrow \uAut(P^{(1)}_\R).$
\end{Prop}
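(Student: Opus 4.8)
The strategy is the classical one, adapted to the functorial setting: an automorphism of $P$ acts on everything built functorially out of $P$, and in particular on the bundle of compatible horizontal frames $\cHor$, on the torsion map $c$, and hence on the first prolongation $P^{(1)}$; admissibility then allows one to descend to the real prolongation $P^{(1)}_\R$. Concretely, given $f\in\uAut(P)(S)$, i.e.\ an $S$-family of automorphisms of $P$ lying over an $S$-family $f_M$ of automorphisms of $M$, one first observes that $f$ induces an $S$-family of automorphisms $L(f)$ of the frame bundle $L(P)$ (differentiation is functorial) which is $GL(V\oplus\mathfrak g)$-equivariant. Because $f$ is a bundle automorphism of $P$ over $f_M$ and is $G$-equivariant, the canonical $1$-form $\vartheta$ and its differential $d\vartheta$ are preserved in the sense that $\vartheta\circ L(f) = (\text{pointwise }f)\circ\vartheta$ on $TP$; I would spell this out from the explicit description of $\vartheta$ in terms of the square $(f=(\pi\circ f,\varphi), X)$ given before Lemma \ref{lem: infinitesimal invariace}. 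This naturality is exactly what is needed for $L(f)$ to preserve the subfunctor $\Hor\subseteq\jet^1_VP$ of horizontal frames and the subfunctor $\cHor$ of compatible horizontal frames (the defining conditions $T(\pi)\circ h = f\in P(S)$ are visibly stable under applying an automorphism of $P$).

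Next I would check that $L(f)$ intertwines the torsion maps: since $c(f',h') = d\vartheta\circ\Lambda^2 h'$ and $d\vartheta$ is natural, we get $c\circ L(f) = c$ as maps $\cHor\to\uHom(\Lambda^2 V,V)$, where the automorphism acts on the source and trivially on the target (the target is a fixed vector space, and $f$ acts on $\underline V_P$ only through the base, which cancels). Consequently $L(f)$ preserves the condition ``torsion contained in $C=\ker(s)$'' that cuts out $P^{(1)}\subseteq\cHor$, so $f$ lifts to an $S$-family of automorphisms $f^{(1)}$ of $P^{(1)}$, automatically $\mathfrak g^{(1)}$-equivariant because $L(f)$ is $GL(V\oplus\mathfrak g)$-equivariant and $\mathfrak g^{(1)}\leqslant GL(V\oplus\mathfrak g)$. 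The assignment $f\mapsto f^{(1)}$ is functorial in $S$ and compatible with composition and inverses (uniqueness of the lift, since $P^{(1)}\to P$ is a reduction and $L(\cdot)$ is injective on $\uAut$), giving a natural transformation of group-valued functors $\uAut(P)\to\uAut(P^{(1)})$.

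The final step is to land in $\uAut(P^{(1)}_\R)$ rather than merely $\uAut(P^{(1)})$. Recall $P^{(1)}_\R$ is the pullback of $r^*P^{(1)}_{0,\R}\to r^*P^{(1)}_0$ along $P^{(1)}\to r^*P^{(1)}_0$, well-defined because the $G$-structure is admissible so that $(\mathfrak g^{(1)})_{\bar0,\R}$ defines a mixed structure on $(\mathfrak g^{(1)})_{\bar0}$ (Lemma \ref{lem: real forms of principal bundles}). Since $f^{(1)}$ is an automorphism over the base $M$, its underlying map $(f^{(1)})_0$ is an automorphism of $P^{(1)}_0$; I would argue that it preserves the reduction $P^{(1)}_{0,\R}$ because that reduction is built canonically from the data defining $P^{(1)}$ (compatible horizontal frames whose structure map lands in the real subspaces $(\underline{\mathrm{Hom}}(V,\mathfrak g)_{\bar0})^\mu_\R$), and $f^{(1)}$ respects all that data; more robustly, the mixed structure on $P^{(1)}_0$ is intrinsic to the real-analytic data and $(f^{(1)})_0$ is by construction a morphism of mixed manifolds. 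By the universal property of the pullback, $f^{(1)}$ then restricts to an automorphism $f^{(1)}_\R$ of $P^{(1)}_\R$. The main obstacle is precisely this last point: verifying carefully that an arbitrary $S$-family automorphism of $P$ (with $S$ possibly non-reduced) induces a map $(f^{(1)})_0$ of the bodies that genuinely preserves the mixed structure $P^{(1)}_{0,\R}$, not just the complex-analytic reduction $P^{(1)}_0$; this is where admissibility and the naturality of the real-prolongation construction have to be combined, and it is the step I would write out in full detail.
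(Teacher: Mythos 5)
Your proposal is correct and is essentially the paper's argument written out in full: the paper's proof consists of the single remark that the statement follows by repeatedly applying the universal property of the pullback in the construction of $P^{(1)}_\R$, which is exactly the functoriality chain (frame bundle, canonical form, $\Hor$, $\cHor$, torsion, $P^{(1)}$, then descent to $P^{(1)}_\R$ via admissibility) that you spell out. The step you flag as needing detail — that the induced map on bodies preserves the reduction $P^{(1)}_{0,\R}$ — is precisely one of those pullback applications, and your outline of it is the intended argument.
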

\begin{proof}
This follows by repeatedly applying the universal property of the pullback
in the construction of $P^{(1)}_\R.$
\end{proof}

\subsection{The automorphisms of a $\{1\}$-structure}
\label{subsec: The automorphisms of a 1 structure}
We now come to the issue of representability of $\uAut(P).$
Before proceeding to higher order $G$-structures we need to treat the simplest
case $G = \{1\}.$
Then a $G$-structure is simply a parallelization 
$\Phi \colon \underline{V_\R}_M\rightarrow TM_\R.$
Such a $\Phi$ induces an even real vector field on $M\times \A(V_\R):$
\[
 \xymatrix{
 Z\colon M\times \A(V_\R)\ar[r] & TM_\R \times \A(V_\R) \ar[r] & T(M\times \A(V_\R))_\R
 }
\]
and
$\underline{\mathrm{Aut}}(\Phi)(S)$ consists of those 
automorphisms making the diagram
\[
 \xymatrix{
S\times M\times \A(V_\R)\ar[r]^-{S\times \Phi} \ar[d]^{f\times V_\R} & S\times TM_\R\ar[d]^{df}\\
S\times M\times \A(V_\R)\ar[r]^-{S\times \Phi} & S\times TM_\R
}
\]
commutative.

We first show that $i^*\uAut(\Phi)$ is representable.
To that end, we endow
$\mathrm{Aut}(\Phi)_0 := \underline{\mathrm{Aut}}(\Phi)(*)$
with the structure of a Lie group acting on $M.$

Recall that there is a forgetful functor sending
a mixed manifold to its underlying smooth manifold.
(We prove in Section \ref{subsec: the underlying sm of a mixed sm} that such a functor does not exist for mixed supermanifolds.)
Consider the underlying parallelization $\Phi_0\colon M_0\times \A((V_\R)_{\bar{0}})\rightarrow T(M_0)_\R$
and its underlying smooth morphism $\Phi_0^{sm}\colon M_0^{sm}\times (V_\R)_{\bar{0}}\rightarrow TM_0^{sm}.$
In order to define a topology on $\Aut(\Phi)_0$, we need the following fact.

\begin{Lemma}
\label{lem: underlying map injective}
The forgetful map $\Aut(\Phi)_0\rightarrow \Aut(\Phi_0)$, $s\mapsto s_0$, is injective.
\end{Lemma}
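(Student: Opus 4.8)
The plan is to exploit that an automorphism of a supermanifold is determined by its effect on the structure sheaf, and that a $\{1\}$-structure automorphism must intertwine the parallelization $\Phi$, which pins down its action on a generating set of functions once the underlying map is known. Concretely, suppose $s, t \in \Aut(\Phi)_0$ have $s_0 = t_0 =: g_0 \colon M_0 \to M_0$. First I would note that $s$ and $t$ are honest automorphisms of the supermanifold $M$ (the case $S = *$), so $s^\sharp, t^\sharp \colon \mathcal{O}_M \to (g_0)_* \mathcal{O}_M$ are isomorphisms of sheaves of superalgebras lifting the same underlying $g_0^\sharp$ on $\mathcal{O}_{M_0}$. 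The claim is that the compatibility with $\Phi$ forces $s^\sharp = t^\sharp$.

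The key step is the following local computation. Fix a point $m \in M_0$ and a coordinate chart $U \cong \A(V_\R)$ with even coordinates $x^1, \dots, x^p$ (real), $z^1, \dots, z^q$ (holomorphic) and odd coordinates $\xi^1, \dots, \xi^r$. The parallelization $\Phi \colon \underline{V_\R}_M \to TM_\R$ gives a global frame $e_a = \Phi(\mathbf{v}_a)$ of $\mathcal{T}_{M,\R}$ (for a basis $\mathbf{v}_a$ of $V_\R$, even and odd), and the intertwining condition in the displayed diagram says exactly that $s$ (resp.\ $t$) carries $e_a$ to $e_a$ under the differential, i.e.\ $ds(e_a) = e_a \circ s^\sharp$ as derivations along $g_0$, and likewise for $t$. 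Now I would argue that such an $s^\sharp$ is uniquely determined by $g_0^\sharp$: writing $s^\sharp(x^i), s^\sharp(z^j), s^\sharp(\xi^k)$ and comparing the action of the frame fields $e_a$ on both sides, one gets first-order relations that, together with the already-fixed values of $g_0^\sharp$ on the reduced coordinates, determine all of $s^\sharp$ by a degree-in-$\xi$ induction (or, in the purely even directions, by the standard fact that an automorphism preserving a global frame and inducing a given map on the reduced space is unique — this is the supergeometric version of: a diffeomorphism fixing a parallelization and a base point is the identity near that point). Since $t^\sharp$ satisfies the same relations with the same right-hand side, $s^\sharp = t^\sharp$ on $U$, hence globally; therefore $s = t$.

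I expect the main obstacle to be making the uniqueness of the local lift $s^\sharp$ from $g_0^\sharp$ genuinely precise in the super (and mixed) setting: one must check that the frame-preservation equations, read modulo powers of the nilpotent ideal $\mathcal{J}_M$, recursively pin down the components of $s^\sharp(x^i)$, $s^\sharp(z^j)$, $s^\sharp(\xi^k)$ of each $\xi$-degree, and that the partial-holomorphy constraint in the mixed directions is automatically consistent (it is, since $s$ is a morphism in $\SMan^\mu$, so $s^\sharp$ already respects $\mathcal{O}_{M,\R}$ versus $\mathcal{O}_M$). An alternative, perhaps cleaner, route is to use the vector field $Z$ on $M \times \A(V_\R)$ induced by $\Phi$: one checks that $s_0 = t_0$ together with both being automorphisms forces $s$ and $t$ to have the same $1$-jet along the zero section relative to $Z$, and then flow-uniqueness for even real vector fields (the flows developed in Section~\ref{subsec: flows of even real vector fields}) identifies $s$ and $t$. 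Either way the essential content is local uniqueness of a frame-preserving automorphism with prescribed reduced part, and injectivity of the forgetful map follows.
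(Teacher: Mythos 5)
Your main argument is essentially the paper's proof: the paper reduces to the kernel case $s_0=\id$ and runs an induction on the infinitesimal neighbourhoods $\mathcal{O}_M/\mathcal{J}^k$, using the frame-preservation identity in coordinates, $Jf = A^{-1}\circ f^\sharp(A)$ with $Z_{v_k}=\sum_l A_{kl}\partial_{q_l}$, to force the next-order terms to vanish — exactly the ``frame-preservation equations read modulo powers of $\mathcal{J}_M$'' induction you outline (comparing $s$ and $t$ directly instead of passing to the kernel is an immaterial difference). Your sketch is correct in outline, so no further comparison is needed.
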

\begin{proof}
Deferred to Section \ref{subsec: prfs lems}.
\end{proof}
Moreover, we have that $\Aut(\Phi_0)\subseteq \Aut(\Phi_0^{sm})$
are precisely the elements which preserve the mixed structure on $M_0^{sm}.$

By a result of Kobayashi \cite{Kobayashi}, any point $x\in M_0$ gives rise to a closed injection
\[
 \Aut(\Phi_0^{sm})\rightarrow M_0^{sm},\ s\mapsto s(x)
\]
and with this topology, $\Aut(\Phi_0^{sm})$
is a Lie group such that the evaluation map
$a_0^{sm}\colon \Aut(\Phi_0^{sm})\times M_0^{sm}\rightarrow M_0^{sm}$
is smooth \cite{Ballmann}.
This topology is the coarsest such that
for all $f\in \Gamma(\mathcal{O}_{M_0^{sm}}),$ the map
$\Aut(\Phi^{sm}_0)\rightarrow \Gamma(\mathcal{O}_{M_0^{sm}}),\ s\mapsto s^\sharp(f)$,
is continuous, where $\Gamma(\mathcal{O}_{M_0^{sm}})$ is considered as a Fr\'echet space with respect to the family of seminorms
$|f|_{K, \partial} = \mathrm{sup}_K |\partial f|$, $K\subseteq M_0$ compact, $\partial$ differential operator.
In this topology, $s_n \rightarrow s$ if and only if
$s_n^\sharp(f)\rightarrow s^\sharp(f)$ in
$\Gamma(\mathcal{O}_{M_0^{sm}})$ for all $f\in \Gamma(\mathcal{O}_{M_0^{sm}}).$

Being mixed is a closed condition (locally equations of the form $\partial_{\overline{z}} s^\sharp (f) = 0$
for all $f\in \mathcal{O}_M$), hence $\Aut(\Phi_0)\subseteq \Aut(\Phi_0^{sm})$ is closed.
Then we get a Lie group $\Aut(\Phi)_0^{sm}\subseteq \Aut(\Phi_0)$, in view of the following lemma.
\begin{Lemma}
\label{lem: forgetful cont and closed}
The subspace $\Aut(\Phi)_0\subseteq \Aut(\Phi_0)$
is closed.
The topology on $\Aut(\Phi)_0$ is such that
$s_n\rightarrow s$ implies that for all pairs of coordinate charts $U$, $V$ such that
$s_n(U)\subseteq V$ for all $n$ large enough,
all the coefficients in the Taylor expansion of $s_n^\sharp(f)$, $f\in \Gamma(\mathcal{O}_{M}|_{V})$, with respect
to the odd coordinates converge in $\mathcal{O}_{M_0^{sm}}(U_0).$
 \end{Lemma}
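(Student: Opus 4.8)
The goal is twofold: first, to show that $\Aut(\Phi)_0 \subseteq \Aut(\Phi_0)$ is closed; second, to identify the convergence behaviour in the induced topology in terms of Taylor coefficients with respect to the odd coordinates. For the closedness, I would argue that membership in $\Aut(\Phi)_0$ inside $\Aut(\Phi_0)$ is a closed condition. A point $s \in \Aut(\Phi_0)$ lifts to an element of $\Aut(\Phi)_0$ precisely when the automorphism $s_0$ of the underlying mixed manifold $M_0$ lifts to an automorphism of the supermanifold $M$ compatible with $\Phi$; by Lemma \ref{lem: underlying map injective} such a lift is unique if it exists, so the issue is purely one of existence. Working in coordinate charts $U \cong \A(V_\R)$, an automorphism of $M$ over $s_0$ is determined by the images $s^\sharp(f)$ of a finite set of generators $f$ of $\mathcal{O}_M$ over $\mathcal{O}_{M_0}$ — namely coordinate functions including the odd generators $\theta_j$ — and the condition that such data assemble into a well-defined global automorphism compatible with $\Phi$ is cut out, locally, by equations. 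These equations are of two kinds: algebraic/differential relations forcing the putative $s^\sharp$ to be an algebra morphism intertwining $\Phi$ (i.e. the commutativity of the square defining $\uAut(\Phi)(S)$ for $S = *$), and the partial-holomorphicity constraints $\partial_{\bar z} s^\sharp(f) = 0$ recording that the lift respects the mixed structure. All of these are closed conditions in the Fréchet topology on sections, so the locus where a compatible lift exists is closed; intersecting over a covering by charts, $\Aut(\Phi)_0$ is closed in $\Aut(\Phi_0)$, which is itself closed in $\Aut(\Phi_0^{sm})$, hence $\Aut(\Phi)_0^{sm}$ is a (closed subgroup, hence) Lie group.

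**Topology and Taylor coefficients.** For the second assertion, I would unwind the definition of the topology on $\Aut(\Phi)_0$, which is the subspace topology inherited from $\Aut(\Phi_0) \subseteq \Aut(\Phi_0^{sm})$, where the latter carries the coarsest topology making $s \mapsto s^\sharp(f)$ continuous into the Fréchet space $\Gamma(\mathcal{O}_{M_0^{sm}})$ for every $f$. A convergent sequence $s_n \to s$ in $\Aut(\Phi)_0$ therefore has $(s_n)_0 \to s_0$ at the level of the underlying mixed manifolds. Fix a point of $M_0$, pick coordinate charts $U, V$ with $s(U_0) \subseteq V_0$; for $n$ large the open condition $s_n(U_0) \subseteq V_0$ persists (evaluation is continuous and $V_0$ is open), so for all large $n$ the restricted morphisms $s_n|_U \colon U \to V$ are defined. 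Choosing odd coordinates $\theta_1, \dots, \theta_m$ on $V$, any $f \in \Gamma(\mathcal{O}_M|_V)$ pulls back to $s_n^\sharp(f) = \sum_{I} s_n^\sharp(f)_I \, \theta^I$ with coefficients $s_n^\sharp(f)_I \in \mathcal{O}_{M_0^{sm}}(U_0)$. Because $s_n$ is determined by its action on the generating coordinates $x^a, z^b, \theta_j$, and convergence of $s_n$ in $\Aut(\Phi)_0$ forces convergence of $s_n^\sharp$ applied to these generators (the underlying-manifold part gives the even bodies, and Lemma \ref{lem: underlying map injective} plus the uniqueness of the lift controls the odd part), the algebra-morphism property lets one propagate this to arbitrary $f$: the coefficient $s_n^\sharp(f)_I$ is a universal polynomial, with smooth-function coefficients, in the pulled-back generators and their derivatives, so it converges in $\mathcal{O}_{M_0^{sm}}(U_0)$ as claimed.

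**The main obstacle.** The delicate point is the odd-coordinate part of the convergence statement — the even/body part is immediate from $(s_n)_0 \to s_0$ in $\Aut(\Phi_0^{sm})$, but the odd coordinates $\theta_j$ are not functions on $M_0$, so their images are not directly controlled by the chosen topology, which is defined via sections of $\mathcal{O}_{M_0^{sm}}$. One must extract convergence of the Taylor coefficients $s_n^\sharp(\theta_j)_I$ from convergence of $s_n^\sharp(f)$ for honest functions $f$ on $M_0^{sm}$. The mechanism is that the compatibility with $\Phi$ (equivalently, that $s_n$ is an automorphism of the $\{1\}$-structure) rigidifies the lift: the parallelization $\Phi$ includes odd directions, and intertwining $df \circ \Phi = \Phi \circ (f \times V_\R)$ expresses the action of $s_n^\sharp$ on odd coordinates in terms of its action on even data together with the fixed structure $\Phi$, so no extra freedom — and no extra topological input — is needed. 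I would make this precise by writing out the commuting square in local coordinates, solving for the $\theta_j$-components, and reading off that their Taylor coefficients are smooth functions of the even body data, whose convergence is already guaranteed; this also re-proves injectivity in the relevant chart, consistent with Lemma \ref{lem: underlying map injective}. Beyond this, the argument is a routine bookkeeping of which finite set of equations cuts out $\Aut(\Phi)_0$ and a check that each is closed in the Fréchet topology.
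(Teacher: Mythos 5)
Your core mechanism is the paper's: the structure equation of the parallelization (in charts, $J(s) = A\cdot s^\sharp(B)$ with $J(s) = (\partial_{p_i} s^\sharp(q_j))$ and $A$, $B$ invertible) rigidly determines every higher Taylor coefficient of an automorphism from its body, and this is what lets convergence in $\Aut(\Phi_0)$ propagate upward. But two steps as written do not go through. First, your opening argument --- ``membership is cut out by closed conditions, hence closed'' --- is not valid as it stands: those closed conditions (algebra morphism, intertwining with $\Phi$, partial holomorphicity) constrain the lift $s^\sharp$, not the point of $\Aut(\Phi_0)$, and the set of bodies admitting a lift is the projection of that closed locus; projections of closed sets need not be closed. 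Closedness is exactly the assertion that a limit $\tilde s$ of liftable bodies is itself liftable, and the paper proves it constructively: starting from $(s^{(0)})^\sharp = \tilde s^\sharp$ it builds $(s^{(k)})^\sharp$ on the infinitesimal neighbourhoods $\mathcal{O}_M/\mathcal{J}^{k+1}$ by reading the next-order coefficients off the structure equation, showing along the way that $(s_n^{(k)})^\sharp \to (s^{(k)})^\sharp$; only this realization of the rigidity you invoke in your last paragraph actually delivers both closedness and the topology statement.

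Second, your division of labour --- ``the even/body part is immediate, only the $\theta_j$-components must be solved for'' --- is off: the pullbacks of the even coordinates also carry nilpotent Taylor coefficients of degrees $2, 4, \dots$, which are not controlled by $(s_n)_0 \to \tilde s$ and are no more functions on $M_0$ than the odd components are. The recursion must alternate: at even order the odd-odd sector of the Jacobian determines the next coefficients of $s^\sharp(\xi_j)$, at odd order the odd-even sector determines those of $s^\sharp(y_i)$. Moreover, what the equation hands you are the derivatives $\partial_{\eta_i}(s^{(k+1)})^\sharp(\,\cdot\,)$; one must check that these fit together to a well-defined coefficient (true in the limit because the compatibility identities hold for every $s_n$), that the limit again satisfies $J(s^{(k+1)}) = (A^{(k+1)}(s^{(k+1)})^\sharp B^{(k+1)})^\sim$ by continuity, and finally that the locally constructed $s|_U$ patch to a global automorphism, which is where Lemma \ref{lem: underlying map injective} is actually used. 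Your remark that ``uniqueness of the lift controls the odd part'' conflates uniqueness with convergence: uniqueness gives the patching, not the control of coefficients. None of this is routine bookkeeping --- it is the substance of the paper's proof --- though your outline does identify the correct equation to exploit.
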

\begin{proof}
Deferred to Section \ref{subsec: prfs lems}.
\end{proof}

In particular, we have an action $a'_0\colon \Aut(\Phi)_0^{sm}\times M_0\rightarrow M_0$
and this is a mixed map since it is so pointwise.
\begin{Lemma}
\label{lem: action}
The map $a'_0$ extends to the action \[
\xymatrix{
 a'^\sharp\colon \mathcal{O}_{M}\ar[r]& \mathcal{O}_{\Aut(\Phi)_0^{sm}\times M},\  f\mapsto (s\mapsto s^\sharp(f)).
}
\]
\end{Lemma}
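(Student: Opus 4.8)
The plan is to show that the pointwise-defined map $a'^\sharp$ is actually a well-defined morphism of sheaves of superalgebras, i.e.\ that it lands in $\mathcal{O}_{\Aut(\Phi)_0^{sm}\times M}$ and is continuous/smooth in the $\Aut(\Phi)_0^{sm}$-direction, and that it satisfies the associativity and unitality axioms of an action. Unitality and associativity are formal once well-definedness is established: they hold because they hold after evaluating at each point $s\in\Aut(\Phi)_0^{sm}$, where $a'_0$ restricts to $a_s = s$ and the composition $s\circ t$ corresponds to the group multiplication, and $\mathcal{O}_{\Aut(\Phi)_0^{sm}\times M}$ injects into the product of stalks along the reduced space (sections of a sheaf on a manifold are determined by their germs). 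So the content is purely local on $M$ and concerns regularity in the group variable.

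First I would work in a coordinate chart $V=\A(V_\R)$ on $M$ with even coordinates $x^a$ (some real, some with a holomorphic partner $z^\alpha$) and odd coordinates $\eta^i$, and a chart $U$ on $M$ with $s(U)\subseteq V$ for $s$ near a fixed $s_0\in\Aut(\Phi)_0^{sm}$; such $U,V$ exist by continuity of the evaluation action $a_0'$. For $f\in\Gamma(\mathcal{O}_M|_V)$ expanded as $f=\sum_I f_I(x)\,\eta^I$ with $f_I$ partially holomorphic, the pullback $s^\sharp(f)$ over $U$ is a polynomial in the odd coordinates whose coefficients are built, via the chain rule, from the Taylor coefficients (in the odd directions) of the components of $s^\sharp$ applied to the $x^a$ and $\eta^i$; Lemma~\ref{lem: family of automorphisms from group action} (more precisely its analogue for $\uDiff(M)$, via the identification in the paragraph preceding Lemma~\ref{lem: family of automorphisms}) gives the explicit normal form $s^\sharp = \prod_k(1+\sum_I\eta^I X_I)\cdot (s_0')^\sharp$. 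The upshot is that each coefficient (in the $\eta$'s over $U$) of $s^\sharp(f)$ is a fixed smooth (partially holomorphic) universal expression in finitely many functions of the form $(s_0')^\sharp(g)$ for $g\in\Gamma(\mathcal{O}_{M_0^{sm}})$ and the components of the derivations $X_I$, i.e.\ in the ``Taylor coefficients of $s^\sharp$'' that appear in Lemma~\ref{lem: forgetful cont and closed}.

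By Lemma~\ref{lem: forgetful cont and closed}, convergence $s_n\to s$ in $\Aut(\Phi)_0^{sm}$ forces exactly these Taylor coefficients to converge in $\mathcal{O}_{M_0^{sm}}(U_0)$ — and, because being mixed is a closed condition, the limits are again partially holomorphic in the even directions, so the $X_I$'s of the limit are again the components of genuine (real, for even $I$) vector fields. Hence $s\mapsto s^\sharp(f)$ is continuous from $\Aut(\Phi)_0^{sm}$ into $\Gamma(\mathcal{O}_M|_V)$, and by the description of the Lie-group structure on $\Aut(\Phi_0^{sm})$ — and the fact that its topology is the initial one for the maps $s\mapsto s^\sharp(g)$ on functions on $M_0^{sm}$, with the action $a_0^{sm}$ smooth — one upgrades continuity to smoothness in the group variable: the same universal expressions, differentiated in the $\Aut(\Phi)_0^{sm}$-direction using smoothness of $a_0^{sm}$ and of the prolonged action on frames (which encodes how the $X_I$ depend on $s$), stay smooth. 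This is enough to conclude that $a'^\sharp$ defines a morphism $\Aut(\Phi)_0^{sm}\times M\to M$ of (mixed) supermanifolds extending $a_0'$.

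The main obstacle is the regularity statement, i.e.\ showing that the odd Taylor coefficients of $s^\sharp$ — equivalently, the derivations $X_I$ along $e_T$ in the normal form — depend smoothly on $s\in\Aut(\Phi)_0^{sm}$ and not merely continuously, and that this dependence is compatible with the partially holomorphic structure so that the output genuinely lies in $\mathcal{O}_{\Aut(\Phi)_0^{sm}\times M}$ rather than in the larger sheaf of smooth functions on the underlying manifold. I expect to handle this by pulling back along the parallelization: the condition defining $\uAut(\Phi)$ says $df\circ(S\times\Phi)=(S\times\Phi)\circ(f\times V_\R)$, which rigidly ties the $X_I$ to $f_0=s$ through $\Phi$ and its derivatives, so that the $X_I$ are determined by $s$ via the flow-type relations coming from the vector field $Z$ on $M\times\A(V_\R)$; smoothness and the reality/holomorphy of the coefficients then follow from the corresponding properties of $Z$, $\Phi$, and the smooth action $a_0^{sm}$, exactly as in the classical parallelizable case of Kobayashi, with the mixed-structure closedness doing the bookkeeping to keep everything partially holomorphic.
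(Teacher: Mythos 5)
Your proposal is correct in its essentials, and its decisive idea --- that the defining condition $df\circ(S\times\Phi)=(S\times\Phi)\circ(f\times V_\R)$, in coordinates the Jacobian identity $J(s)=A\cdot s^\sharp(B)$, rigidly determines the higher-order coefficients of $s^\sharp$ (in the odd variables of $M$) from the underlying smooth data --- is exactly the mechanism of the paper's proof; the difference is in the packaging. You first extract continuity of $s\mapsto s^\sharp(f)$ from Lemma \ref{lem: forgetful cont and closed} and then propose to ``upgrade'' to smoothness, whereas the paper bypasses continuity entirely and constructs the family morphism directly by induction on the nilpotent filtration, mirroring the proof of Lemma \ref{lem: forgetful cont and closed}: starting from $((a')^{(0)})^\sharp=(a'_0)^\sharp$, the odd-odd (for $k$ even) resp.\ odd-even (for $k$ odd) block of the identity $J^{\mathrm{res}}(a')=A\,(a')^\sharp(B)$ prescribes the next-order coefficients as explicit expressions in already-constructed smooth, partially holomorphic data, so smoothness and holomorphy in the group direction come for free, and the only checks --- that the prescribed partial derivatives fit together to well-defined coefficients and that the Jacobian identity persists --- are verified pointwise, i.e.\ after specializing to each $s\in\Aut(\Phi)_0^{sm}$. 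This buys a cleaner argument: your middle step (``the universal expressions, differentiated in the group direction, stay smooth'') is not by itself a proof, since knowing that the coefficients of $s^\sharp$ depend smoothly on $s$ is precisely what is at stake; but your closing paragraph names the right cure, and carrying it out in coordinates reproduces the paper's induction. One small correction: Lemma \ref{lem: family of automorphisms from group action} governs odd parameter directions $\A(\C^{0|n})$ and is not what is needed here --- the parameter space $\Aut(\Phi)_0^{sm}$ is purely even, and the relevant expansion is in the odd coordinates of $M$, exactly as in Lemma \ref{lem: forgetful cont and closed}.
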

\begin{proof}
Deferred to Section \ref{subsec: prfs lems}.
\end{proof}

As explained in Section \ref{subsec: flows of even real vector fields},
even real vector fields have unique maximal flows.
Using this, the action above and the description of the topology on $\Aut(\Phi)_0^{sm}$,
one obtains an isomorphism
\[
\mathrm{Lie}_{\R}(\Aut(\Phi)_0^{sm}) \cong \mathfrak{aut}(\Phi)^c_{\bar{0}, \R} :=  \{X\in \Gamma(\mathcal{T}_{M, \R})_{\bar{0}}\mid [X, Z] = 0, X\ \mathrm{complete}\}\subseteq \mathfrak{aut}(\Phi).
\]

Then $\C$-linearization yields a Lie algebra morphism
\[
\xymatrix{
 \C\otimes \mathfrak{aut}(\Phi)_{\bar{0}, \R}^{c} \ar[r]& \Gamma(\mathcal{T}_{M})_{\bar{0}}
}
\]
and the kernel is of the form
\[
\widetilde{\mathfrak{aut}}(\Phi)_{\bar{0}, \C}^c:=\{1\otimes iv- i\otimes v\mid v\in \mathfrak{aut}(\Phi)_{\bar{0}, \C}^c\}
\]
for a complex invariant ideal $\mathfrak{aut}(\Phi)_{\bar{0}, \C}^c\subseteq \mathfrak{aut}(\Phi)_{\bar{0}, \R}^c.$
This yields the mixed structure $\Aut(\Phi)_0$ on $\Aut(\Phi)_0^{sm}$
and, on general grounds, the quotient
\[
(\C\otimes{\mathfrak{aut}(\Phi)}_{\bar{0},\R}^{c})/{\widetilde{\mathfrak{aut}}}(\Phi)_{\bar{0}, \C}^c := \mathfrak{aut}(\Phi)^{c, d}_{\bar{0}} \subseteq \mathfrak{aut}(\Phi)_{\bar{0}}
\]
is the Lie algebra of left-invariant derivations of $\mathcal{O}_{\Aut(\Phi)_0}.$
It is the algebra of  \emph{complete decomposable infinitesimal
automorphisms} in the sense that any of its elements can be written as the sum $v + iw$ of 
complete real vector fields $v$ and $w.$
Moreover, with this structure $a\colon \Aut(\Phi)_0\times M\rightarrow M$
is a mixed morphism, by Lemma \ref{lem: action underlying real Lie group and mixed group}.

Finite-dimensionality of the full algebra of infinitesimal
automorphisms is ensured by the following lemma.
\begin{Lemma}
\label{lem: aut Phi finite dimensional}
Assume that $M_0$ is connected.
For every $p\in M_0$, evaluation
$\mathfrak{aut}(\Phi)\rightarrow {T_pM}$, $X\mapsto X(p)$, is
injective.
If $M_0$ is not connected, the analogous statement holds
true if one chooses one point for each connected component.
\end{Lemma}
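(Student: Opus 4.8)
The plan is to translate the statement into a uniqueness assertion for a linear total differential equation and then bootstrap through the filtration of $\mathcal O_M$ by the nilpotent ideal $\mathcal J_M$. Since $\mathfrak{aut}(\Phi)$ is a super vector space and $T_pM=(T_pM)_{\bar 0}\oplus(T_pM)_{\bar 1}$, it suffices to kill a homogeneous $X$ with $X(p)=0$. First I would fix a homogeneous basis $(v_a)$ of $V_\R$ and set $e_a=\Phi(v_a)$: as $\Phi$ is an isomorphism this is a global frame of $\mathcal T_M$ over $\mathcal O_M$ (the $v_a\in(V_\R)_{\bar 0}$ give even fields, the $v_a\in V_{\bar 1}$ odd ones), so one may write $X=\sum_a\xi^a e_a$ with $\xi^a\in\Gamma(\mathcal O_M)$ and $[e_b,e_a]=\sum_c c^c_{ba}e_c$ with globally defined $c^c_{ba}\in\Gamma(\mathcal O_M)$. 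For a $\{1\}$-structure the condition $X\in\mathfrak{aut}(\Phi)$ is that the natural lift of $X$ to $L(M)$ be tangent to the section $P$, equivalently $[X,e_a]=0$ for all $a$ (equivalently $[\bar X,Z]=0$ for the pullback $\bar X$ of $X$ to $M\times\A(V_\R)$, since $Z=\sum_a t_a e_a$ in linear fibre coordinates). Expanding the bracket with the structure equations turns this into the linear total differential equation
\[
e_a(\xi^c)=\pm\textstyle\sum_b c^c_{ba}\,\xi^b\qquad(a,c\ \text{arbitrary}),
\]
the signs being those dictated by the super Leibniz rule. Because $(e_a(p))_a$ is a basis of $T_pM$ and $X(p)=\sum_a\xi^a_\emptyset(p)\,e_a(p)$, where $\xi^a_\emptyset\in\mathcal O_{M_0}$ is the reduction of $\xi^a$, the hypothesis $X(p)=0$ says precisely that $\xi^a_\emptyset(p)=0$ for all $a$.

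Next I would dispose of the reduced level. Reducing the equations indexed by the even frame fields modulo $\mathcal J_M$ — even fields preserve $\mathcal J_M$ and induce on $M_0$ the frame $(\bar e_i)$ of the underlying parallelization $\Phi_0\colon M_0\times\A((V_\R)_{\bar 0})\to T(M_0)_\R$ — yields $\bar e_i(\xi^c_\emptyset)=\pm\sum_b(c^c_{bi})_\emptyset\,\xi^b_\emptyset$, a first-order linear system whose principal part is an honest absolute parallelism on $M_0$. For such a system a solution on a connected manifold is determined by its value at a single point: restricting to a smooth path joining $p$ to a given point turns it into a linear ODE, to which uniqueness applies. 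This is exactly the ingredient behind the injectivity of $s\mapsto s(x)$ on $\Aut(\Phi_0^{sm})$ quoted above. Since $M_0$ is connected and $\xi^\bullet_\emptyset$ vanishes at $p$, we get $\xi^a_\emptyset\equiv0$ for all $a$.

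Finally I would propagate through the nilpotents, filtering $\mathcal O_M$ by degree $r$ in the odd coordinates and writing $\xi^c=\sum_{r\ge0}\xi^c_{(r)}$, the base case $r=0$ being the previous paragraph. The decisive point is that it is the \emph{odd} frame fields $e_\alpha=\Phi(v_\alpha)$, $v_\alpha\in V_{\bar 1}$, that carry information upward: on the associated graded of the $\mathcal J_M$-filtration, $e_\alpha$ acts as $\sum_\beta\bar S^\beta_\alpha\,\partial_{\eta_\beta}$, where $\partial_{\eta_\beta}$ lowers the degree by one and the matrix $(\bar S^\beta_\alpha)$ of reduced $\partial_{\eta_\beta}$-coefficients is invertible because $\Phi$ restricts to an isomorphism onto $TM_{\bar 1}$. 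Taking the degree-$r$ part of $e_\alpha(\xi^c)=\pm\sum_b c^c_{b\alpha}\xi^b$ thus gives $\sum_\beta\bar S^\beta_\alpha\,\partial_{\eta_\beta}\xi^c_{(r+1)}$ equal to an explicit expression in the $\xi^\bullet_{(\le r)}$ and the fixed data; inverting $(\bar S^\beta_\alpha)$ pins down every $\partial_{\eta_\beta}\xi^c_{(r+1)}$, hence $\xi^c_{(r+1)}$ itself by the Euler relation for homogeneous polynomials in the $\eta$'s. Since the filtration is finite and $\xi^\bullet_{(0)}=\xi^\bullet_\emptyset\equiv0$, upward induction forces $\xi^a\equiv0$ for all $a$, i.e.\ $X=0$; for disconnected $M_0$ one runs the argument on each component about its chosen base point. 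No step here is deep, and I expect the only one needing genuine care to be the bookkeeping just indicated — verifying that the odd frame equations have the stated leading form with invertible coefficient matrix on the associated graded, which is what makes the entire nilpotent part purely algebraic and confines the analytic input to the classical uniqueness statement on $M_0$.
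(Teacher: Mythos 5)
Your proposal is correct in substance but follows a genuinely different route from the paper. The paper argues via flows, following Ballmann: it passes to the even real vector field $Z$ on $M\times\A(V_\R)$ induced by $\Phi$, invokes the existence of maximal flows of even real vector fields (Theorem \ref{thm: maximal flow}) to obtain a local isomorphism $\varphi=\Theta^Z(1,p,-)$ near any $p$, uses $[X\otimes\id_{V_\R},Z]=0$ together with Proposition \ref{prop: commuting flow and vector fields} to get $\varphi^\sharp\circ X=0$ whenever $X(p)=0$, and finishes with an open--closed argument on the zero set. You instead expand $X$ in the global frame $e_a=\Phi(v_a)$, convert $[X,e_a]=0$ into a linear total differential system for the coefficients $\xi^a$, settle the reduced level by ODE uniqueness along paths in the connected $M_0$, and handle the nilpotent part purely algebraically through the $\mathcal{J}_M$-adic filtration, using the invertibility of $(\bar{S}^\beta_\alpha)$ and the Euler identity. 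This avoids the super flow theory of Section \ref{subsec: flows of even real vector fields} altogether (the paper needs it elsewhere anyway), at the cost of coordinate bookkeeping; both are classical proofs that an infinitesimal automorphism of an absolute parallelism is determined by its value at one point.

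One point to repair in the mixed setting: a homogeneous real basis of $V_\R$ is not an $\mathcal{O}_M$-basis of $\mathcal{T}_M$ when $V_\C\neq 0$, so as written your frame is not free and the component equations do not follow. Choose instead a homogeneous $\C$-basis of $V$ contained in $V_\R$; then the frame is free, but the reduced even fields $\bar{e}_i$ form a frame of $\mathcal{T}_{M_0}$ rather than of the real tangent bundle of $M_0^{sm}$. The path argument still works because the $\xi^c_\emptyset$ are partially holomorphic, so their antiholomorphic derivatives vanish and $\tfrac{d}{dt}(\xi^c_\emptyset\circ\gamma)$ remains a linear combination of the frame derivatives; with this adjustment your induction through the filtration goes through as stated.
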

\begin{proof}
Deferred to Section \ref{subsec: prfs lems}.
\end{proof}

Moreover, the conjugation action of  $\Aut(\Phi)_0$
on $\Gamma(\mathcal{T}_M)$ restricts to an action on
$\mathfrak{aut}(\Phi)$
and the differential of this representation is simply the restriction of the
adjoint representation
\[
\xymatrix{
{\mathfrak{aut}(\Phi)}_{\bar{0}}^{c, d}\times \mathfrak{aut}(\Phi) \ar[r]& \mathfrak{aut}(\Phi).
}
\]

The following result shows that $\Aut(\Phi)_0$ has the correct topology
and mixed structure.

\begin{Prop}
The functors $i^*\uAut(\Phi)$ and $\mathsf{M}^\mu(-, \Aut(\Phi)_0)$
are naturally isomorphic.
\end{Prop}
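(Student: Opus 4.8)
The plan is to exhibit mutually inverse natural transformations between $i^*\uAut(\Phi)$ and $\Man^\mu(-,\Aut(\Phi)_0)$. Fix a mixed manifold $S$. An $S$-point of $i^*\uAut(\Phi)$ is an $S$-family of automorphisms of the $\{1\}$-structure on $M$, i.e.\ an automorphism $f\colon S\times M\to S\times M$ over $S$ whose induced map on frame bundles preserves $\Phi$; equivalently $df\circ(S\times\Phi)=(S\times\Phi)\circ(f\times V_\R)$. An $S$-point of $\Man^\mu(-,\Aut(\Phi)_0)$ is a mixed map $g\colon S\to\Aut(\Phi)_0$. First I would construct the map from the right-hand side to the left: given $g\colon S\to\Aut(\Phi)_0$, compose with the action $a\colon\Aut(\Phi)_0\times M\to M$ (which is a mixed morphism, as noted just before the statement) to get $a\circ(g\times M)\colon S\times M\to M$, and pair it with the projection $S\times M\to S$ to obtain a map $S\times M\to S\times M$ over $S$. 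One must check this is an $S$-family of \emph{automorphisms} (invertibility follows because $\Aut(\Phi)_0$ is a group, so $g^{-1}$ yields the inverse family), and that it preserves $\Phi$; the latter is exactly the defining property of $\Aut(\Phi)_0\subseteq\Aut(\Phi_0^{sm})$, pulled back along $g$, using that everything is determined pointwise together with Lemma \ref{lem: forgetful cont and closed}. Naturality in $S$ is immediate.

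Next I would construct the inverse. Given an $S$-family $f\in i^*\uAut(\Phi)(S)$, for each $s\in S_0$ restrict along $s\colon *\to S$ to obtain $f_s\in\uAut(\Phi)(*)=\Aut(\Phi)_0$. This defines a set-map $\underline{g}\colon S_0\to\Aut(\Phi)_0$, and the content is to promote it to a morphism of mixed manifolds $g\colon S\to\Aut(\Phi)_0$, i.e.\ to show $\underline{g}$ is smooth (indeed a mixed map) and that $g^\sharp$ reconstructs $f$. Here is where I would use the explicit description of the topology and mixed structure on $\Aut(\Phi)_0$: by Lemma \ref{lem: forgetful cont and closed} the topology on $\Aut(\Phi)_0$ is the one pulled back from $\Gamma(\mathcal{O}_{M_0^{sm}})$ via $t\mapsto t^\sharp(h)$, so continuity (and smoothness) of $\underline{g}$ reduces to smoothness of $s\mapsto (f_s)^\sharp(h)=$ (suitable component of) $f^\sharp(p_M^\sharp h)$ for $h\in\Gamma(\mathcal{O}_{M_0^{sm}})$, which holds because $f$ is a morphism of locally ringed spaces over $S$. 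That $g$ is a mixed map follows since $f$ is a morphism in $\SMan^\mu$, so the partial-holomorphy equations $\partial_{\overline z}$ are satisfied fibrewise, which is precisely the closed condition cutting out $\Aut(\Phi)_0$ inside $\Aut(\Phi_0^{sm})$. Finally, Lemma \ref{lem: action} (the action $a'^\sharp$ exists on all of $\mathcal{O}_M$) shows that $f^\sharp$ is recovered as $(g\times M)^\sharp\circ a'^\sharp$, so the two constructions are mutually inverse.

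I would then observe that the non-super case is the hypothesis under which everything was set up: $S$ ranges over $\Man^\mu$ (equivalently, $i^*$ restricts $\uAut(\Phi)$ to purely even test objects), so there are no nilpotents on $S$ to worry about and an $S$-point of $\uAut(\Phi)$ really is honestly the data of a pointwise-defined family together with its regularity, matching the data of a mixed map into $\Aut(\Phi)_0$. The group structures agree because composition of $S$-families corresponds to pointwise composition in $\Aut(\Phi)_0$, and the action maps agree by Lemma \ref{lem: action}; hence the isomorphism is one of group-valued functors.

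The main obstacle I expect is the smoothness (and mixedness) of $\underline g\colon S_0\to\Aut(\Phi)_0$: one has only a pointwise family a priori, and must leverage exactly the Fr\'echet-topology characterization from Lemmas \ref{lem: underlying map injective} and \ref{lem: forgetful cont and closed} — together with Kobayashi's closed-embedding $\Aut(\Phi_0^{sm})\hookrightarrow M_0^{sm}$, $t\mapsto t(x)$ — to deduce genuine smoothness from the fact that $f$ is a single morphism over $S$ rather than merely a continuous assignment of points. The injectivity statement of Lemma \ref{lem: underlying map injective} is what guarantees the constructions are inverse to one another rather than merely a surjection in one direction.
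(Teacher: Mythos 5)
Your overall architecture is the paper's: one direction comes from composing a map $S\to\Aut(\Phi)_0$ with the (mixed) action on $M$, the other from evaluating an $S$-family at the points of $S_0$, and the entire content is to show the resulting $g\colon S\to\Aut(\Phi)_0$ is a morphism of mixed manifolds. But your justification of precisely this step has a gap. Lemma \ref{lem: forgetful cont and closed} only pins down the \emph{topology} of $\Aut(\Phi)_0$ (the coarsest making $t\mapsto t^\sharp(h)$ continuous into a Fr\'echet space), so reducing to smoothness of $s\mapsto (f_s)^\sharp(h)$ yields at best continuity of $\underline g$; since $\underline g$ is not a homomorphism, continuity does not imply smoothness, and nothing in your argument upgrades it (you would need, and have not established, that the maps into the Fr\'echet space detect the smooth structure of $\Aut(\Phi)_0$, not just its topology). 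Similarly, your argument for mixedness of $g$ --- ``the $\partial_{\overline z}$-equations hold fibrewise, which is the closed condition cutting out $\Aut(\Phi)_0$'' --- only shows that each value $f_s$ preserves the mixed structure of $M$, i.e.\ that $\underline g$ lands in the right set; it does not show that $g$ is partially holomorphic for the mixed structure on $\Aut(\Phi)_0$, which is defined via the ideal $\mathfrak{aut}(\Phi)^c_{\bar 0,\C}\subseteq\mathfrak{aut}(\Phi)^c_{\bar 0,\R}$ and is not a fibrewise condition on $M$.

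The paper closes exactly this gap with the tool you mention only in passing: the evaluation map $\Aut(\Phi)_0\to M_0$, $s\mapsto s(m_0)$, is (by Kobayashi's theorem and the construction of the smooth and mixed structures on $\Aut(\Phi)_0$) a closed embedding, so a map into $\Aut(\Phi)_0$ is smooth and mixed as soon as its composite with evaluation is. For your $g$ that composite is $f_0(-,m_0)\colon S_0\to M_0$, which is smooth and mixed simply because $f$ is a morphism in $\SMan^\mu$ over $S$. Replace your Fr\'echet-topology reduction by this one-line evaluation argument (and note that over a purely even base a family is determined by its restrictions to points, which is what makes the two constructions mutually inverse --- Lemma \ref{lem: underlying map injective} alone is not quite the statement you need there), and your proof becomes the paper's.
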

\begin{proof}
Given a map $T_0\rightarrow \Aut(\Phi)_0$, the action of the group
yields a map $T_0\times M\rightarrow T_0\times M.$
Conversely, take an element $f\colon T_0\times M\rightarrow T_0 \times M$ in $\uAut(\Phi)(T_0).$
The obvious candidate $\tilde{f}\colon T_0\rightarrow \Aut(\Phi)_0$ is a smooth mixed map since
the composition
\[
 T_0\rightarrow \Aut(\Phi)_0\rightarrow M_0
\]
with evaluation at some $m\in M_0$
equals $f_0(-, m_0)$ which is smooth and mixed.
\end{proof}

\subsection{The automorphisms of a $G$-structure of finite type}

\begin{Def}
An admissible $G$-structure is of \emph{finite type} if there exists a $k\geq 0$ such that $G_\R^{(k+l)} = \{1\}$
for all $l\geq 0.$
\end{Def}

The main theorem is as follows:
\begin{Thm}
\label{thm: main}
Suppose $P\rightarrow M$ is an admissible
$G$-structure of finite type.
Then $i^* \uAut(P)$ is representable
and its (real) Lie algebra consists of the complete real infinitesimal
automorphisms of $P$, denoted by $\mathfrak{aut}(P)_{\bar{0},\R}^{c}.$

Moreover, $\mathfrak{aut}(P)$ is finite-dimensional
and the functor $\uAut(P)$ is representable if 
and only if $\mathfrak{aut}(P)_{\bar{0}}^{c, d} = \mathfrak{aut}(P)_{\bar{0}}.$
\end{Thm}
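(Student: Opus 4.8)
The strategy is to bootstrap from the $\{1\}$-structure case (Section \ref{subsec: The automorphisms of a 1 structure}) along the tower of real prolongations. Since $P$ is of finite type, there is a $k$ with $G_\R^{(k)} = \{1\}$, so $P_\R^{(k)}$ is a $\{1\}$-structure on $P_\R^{(k-1)}$. The Proposition on prolongation of automorphisms gives a chain of inclusions of group-valued functors
\[
\uAut(P)\hookrightarrow \uAut(P^{(1)}_\R)\hookrightarrow \cdots\hookrightarrow \uAut(P^{(k)}_\R),
\]
and the last term is representable by a mixed Lie group $\Aut(P^{(k)}_\R)_0$ with Lie algebra the complete decomposable infinitesimal automorphisms, by the work already done for $\{1\}$-structures. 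The first task is to show that $\uAut(P)$ is a \emph{closed} subfunctor of $\uAut(P^{(k)}_\R)$ — more precisely, after applying $i^*$, that $i^*\uAut(P)$ is represented by a closed subgroup of $\Aut(P^{(k)}_\R)_0$. The point is that an $S_0$-family of automorphisms of $P^{(k)}_\R$ descends to one of $P^{(k-1)}_\R$ precisely when it is compatible with the structure map $P^{(k)}_\R\to P^{(k-1)}_\R$, which is a closed condition on the point functor; iterating $k$ times identifies $i^*\uAut(P)$ with the intersection of $k$ closed subgroups of $\Aut(P^{(k)}_\R)_0$, hence a closed Lie subgroup. Its Lie algebra is then the intersection of the corresponding subalgebras, and one checks that under the chain of prolongation inclusions the complete real infinitesimal automorphisms of $P^{(k)}_\R$ that are tangent to all the reductions correspond exactly to $\mathfrak{aut}(P)^c_{\bar 0,\R}$; this uses that a vector field on $M$ lifts to the frame bundle and then to each prolongation functorially, and completeness is preserved because the lift of a complete even real vector field is complete (its flow is obtained by prolonging the flow on $M$).

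For finite-dimensionality of $\mathfrak{aut}(P)$, the plan is to embed $\mathfrak{aut}(P)$ into $\mathfrak{aut}(\Phi)$ for the $\{1\}$-structure $\Phi = P^{(k)}_\R$. Every infinitesimal automorphism $X\in\mathfrak{aut}(P)\subseteq\Gamma(\mathcal T_M)$ prolongs to an infinitesimal automorphism of each $P^{(j)}_\R$ by functoriality of the prolongation construction (this is the infinitesimal version of the Proposition on prolongation of automorphisms, applied to the one-parameter "formal" families, i.e.\ to $T$-points with $T = \A(\C^{0|0})$ and tangent directions; concretely one differentiates). In particular $X$ determines, and is determined by, an element of $\mathfrak{aut}(\Phi)$. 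By Lemma \ref{lem: aut Phi finite dimensional}, evaluation $\mathfrak{aut}(\Phi)\to\bigoplus T_{p_\alpha}(P^{(k-1)}_\R)$ (one point per connected component) is injective, hence $\mathfrak{aut}(\Phi)$ is finite-dimensional, and therefore so is $\mathfrak{aut}(P)$. One subtlety: the prolongation of $X$ to $P^{(j)}$ is a priori only defined once one knows $X$ already lifts to $P^{(j-1)}$, which is exactly the defining property of $\mathfrak{aut}(P)$ and its iterated prolongations, so this is an induction on $j$ with base case the lift of $X\in\mathfrak{aut}(P)$ to $L(M)$ given by the definition of infinitesimal automorphism.

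For the final equivalence, one argues as follows. If $\mathfrak{aut}(P)^{c,d}_{\bar 0} = \mathfrak{aut}(P)_{\bar 0}$, then the mixed Lie group $G_P := \Aut(\Phi)_0$-style object constructed in the $\{1\}$-structure case, intersected down to $P$, has the property that \emph{all} of $\mathfrak{aut}(P)_{\bar 0}$ consists of complete decomposable vector fields; combined with finite-dimensionality of $\mathfrak{aut}(P)$ and the action $a\colon G_P\times M\to M$ already available, one extends the representability of $i^*\uAut(P)$ to all of $\uAut(P)$ by building the odd directions: an $\A(\C^{0|n})\times T$-point of $\uAut(P)$ is, by the analogue of Lemma \ref{lem: family of automorphisms from group action} together with Lemma \ref{lem: family of automorphisms}, the data of a $T_0$-point of $G_P$ and odd-degree families $X_I$ of infinitesimal automorphisms, all of which are sections of the finite-dimensional bundle $\underline{\mathfrak{aut}(P)}$ over $T$; so $\uAut(P)$ is the functor of points of the mixed Lie supergroup with super pair $(\mathfrak{aut}(P), G_P)$ — here one needs $\mathfrak{aut}(P)_{\bar 0} = \mathfrak{aut}(P)^{c,d}_{\bar 0}$ precisely to guarantee that the even part of this super pair is genuinely $\mathrm{Lie}_\C(G_P)$ and that Proposition \ref{prop: equivalence pairs and groups} applies. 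Conversely, if $\uAut(P)$ is representable by a mixed Lie supergroup $\mathcal G$, then $\mathrm{Lie}_\C(\mathcal G)_{\bar 0} = \mathfrak{aut}(P)_{\bar 0}$ (reading off $\A(\C^{1|0})$-points), but $\mathrm{Lie}_\C(\mathcal G)_{\bar 0}$ is by construction the algebra of left-invariant even vector fields on a mixed Lie group, which — by the $\{1\}$-structure analysis and the Remark following the definition of $\mathfrak{aut}(P)$ — forces every element to be complete and decomposable, i.e.\ $\mathfrak{aut}(P)_{\bar 0} = \mathfrak{aut}(P)^{c,d}_{\bar 0}$. \textbf{The main obstacle} I expect is the careful verification that the iterated prolongation genuinely identifies $i^*\uAut(P)$ with a \emph{closed} subgroup of $\Aut(P^{(k)}_\R)_0$ and that the Lie-algebra identification $\mathrm{Lie}(\text{that subgroup}) = \mathfrak{aut}(P)^c_{\bar 0,\R}$ holds — in particular that no completeness is lost or spuriously gained when passing up and down the tower, which requires knowing the prolonged flow of a complete vector field is the flow of the prolonged vector field.
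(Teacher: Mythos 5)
Your proposal follows essentially the same route as the paper: embed $\uAut(P)$ into $\uAut(P^{(k)}_\R)$ for the terminal real prolongation, identify $i^*\uAut(P)$ with a closed subgroup of the (already constructed) automorphism group of that $\{1\}$-structure, descend the action down the tower to get the Lie algebra $\mathfrak{aut}(P)^{c}_{\bar 0,\R}$ and finite-dimensionality from Lemma \ref{lem: aut Phi finite dimensional}, and settle the representability of the full functor via the mixed super pair $(\mathfrak{aut}(P),\Aut(P)_0)$ together with Lemmas \ref{lem: family of automorphisms from group action} and \ref{lem: family of automorphisms}. The technical points you flag (closedness of the descent condition, and that the prolonged flow of a complete real field is the flow of the prolonged field) are exactly the ones the paper's proof relies on, so the plan is sound.
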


\begin{proof}
We choose $k\geq 0$ such that $G_\R^{(k+l)} = \{1\}$ for all $l\geq 0.$
Then we have an embedding $\uAut(P)\rightarrow \uAut(P^{(k)}_\R).$
Hence, $\mathfrak{aut}(P)$ is finite-dimensional in view of Lemma \ref{lem: aut Phi finite dimensional}.
Let $\Phi$ be the given parallelization of the real tangent bundle of $P^{(k-1)}_\R.$

We show that the inclusion
\[
\uAut(P)(*)\subseteq \uAut(\Phi)(*) =  \Aut(\Phi)_0^{sm}
\]
is closed.
Recall that the topology on $\Aut(\Phi)_0^{sm}\subseteq (P_\R^{(k-1)})_0^{sm}$ is such that $s_n\rightarrow s$
implies that locally all $s_n^\sharp(f)$, $f\in \Gamma(\mathcal{O}_{P_\R^{(k-1)}}|_{V})$, converge
in the closed subspace
\[
\mathcal{O}_{P_\R^{(k-1)}}(U_0)\cong \bigoplus \mathcal{O}_{(P_\R^{(k-1)})_0}(U_0)\subseteq \bigoplus\mathcal{O}_{(P_\R^{(k-1)})_0^{sm}}(U_0),
\]
where the number of summands is $2^d$, $d$ denoting the odd dimension of $P^{(k-1)}_\R.$

Now assume $s_n\in \uAut(P)(*)$ and $s_n^{(k)}\rightarrow \tilde{s}.$
From the construction of the prolongation, it is clear that one obtains a diffeomorphism $s\colon M\rightarrow M$
with $k$th prolongation $s^{(k)}$ equal to $\tilde{s}.$
From equivariance it now follows that $s$ is actually in $\uAut(P)(*).$

Next, assume that
the action $\Aut(P^{(i+1)}_\R)_0^{sm}\times {P^{(i)}_\R}\rightarrow {P^{(i)}_\R}$
is smooth.
Restricted to $\Aut(P^{(i)}_\R)_0^{sm}$,
it is pointwise equivariant, hence it is itself
equivariant
and thus descends to an action on $P^{(i-1)}_\R.$
This action gives the identification of the Lie algebra of $\Aut(P)_0^{sm}$ with
$\mathfrak{aut}(P)^{c}_{\bar{0}, \R}$,
and the mixed structure is now defined as in the case of $\Aut(\Phi)_0.$
Then the action just defined refines to an action $\Aut(P)_0\times M\rightarrow M$
by Lemma \ref{lem: action underlying real Lie group and mixed group}
and, using this, similar as in the situation of the automorphisms
of a parallelization, one deduces that $i^* \uAut(P)\cong \Aut(P)_0.$

Clearly, if $\uAut(P)$ is representable, then $\mathfrak{aut}(P)$ can only
consist of complete and decomposable vector fields.
Conversely,
if $\mathfrak{aut}(P)_{\bar{0}}^{c, d} = \mathfrak{aut}(P)_{\bar{0}}$,
then
\[
\Aut(P) = (\mathfrak{aut}(P), \Aut(P)_0)
\]
forms a mixed super pair. The action defines a map
$\SMan^\mu(-,\Aut(P))\rightarrow \underline{\mathrm{Diff}}(M)$,
and in view of Lemma \ref{lem: family of automorphisms from group action},
it factors locally through an isomorphism
to $\uAut(P).$
Hence, it factors globally as an isomorphism $\SMan^{\mu}(-, \Aut(P))\cong \uAut(P).$ 
\end{proof}

\subsection{Proofs of Lemmas \ref{lem: underlying map injective}, \ref{lem: forgetful cont and closed}, \ref{lem: action}, and \ref{lem: aut Phi finite dimensional}}
\label{subsec: prfs lems}
\begin{proof}[Proof of Lemma \ref{lem: underlying map injective}]
Let $s\in \Aut(\Phi)_0$ be such that $s_0 =\id.$
In order to see that this implies $s = \id$, we consider, for $k\geq 1$, the restriction of $s$ to the
$(k-1)$th infinitesimal
neighbourhood
\[
\xymatrix{
(s^{(k-1)})^\sharp\colon \mathcal{O}_M/\mathcal{J}^k \ar[r]& (s_0)_*\mathcal{O}_M/\mathcal{J}^k.
}
\]
We have $(s^{(0)})^\sharp = s_0^\sharp = \id.$

Now, we choose a homogeneous basis
$\{v_1, \ldots, v_n, v_{n+1}, \ldots, v_{n+m}\}$ of $V_\R$
and local coordinates
$\{q_1, \ldots, q_n, q_{n+1}, \ldots, q_{n+m}\}$
on an open subset $U_0$ containing $m\in M_0.$
Here, the first $n$ (respectively last $m$) entries are assumed to be even
(resp. odd). 
In the given basis
\[
Z_{v_k} = \sum_{l} A_{kl} \partial_{q_l}
\]
for some even invertible matrix $A = (A_{kl})\in GL_{\mathcal{O}_M}(\mathcal{O}_M(U_0)^{n|m}).$
The requirement for $f$ to lie in $\Aut(\Phi)_0$ reads
\[
 Jf = A^{-1}\circ f^\sharp(A)
\]
where $Jf = (\partial_{q_i} f^\sharp(q_j))$
and we denote the natural extension of $f^\sharp$ to matrices by the same symbol.

So assume $(f^{(k-1)})^\sharp = \id.$
We have
\begin{align*}
 Jf + \mathcal{J}^k(U_0)^{(n|m)\times (n|m)} & = A^{-1}\circ f^\sharp(A) + \mathcal{J}^k(U_0)^{(n|m)\times (n|m)}\\
                      & = A^{-1}\circ (f^{(k-1)})^\sharp(A) + \mathcal{J}^k(U_0)^{(n|m)\times (n|m)}\\
                      & = \id_{n|m} + \mathcal{J}^k(U_0)^{(n|m)\times (n|m)},
\end{align*}
and this implies $(f^{(k)})^\sharp = \id.$
\end{proof}

\begin{proof}[Proof of Lemma \ref{lem: forgetful cont and closed}]
Let $\{s_n\}$ be a sequence in $\Aut(\Phi)_0$ such that $\{(s_n)_0\}$ converges
to some $\tilde{s}.$ We have to show that $\tilde s = s_0$ for some suitable $s\in \Aut(\Phi)_0$
and that $s_n$ converges to $s.$
Without loss of generality all $(s_n)_0$ lie in one coordinate chart (in $\Aut(\Phi_0)$)
and since $a_0^{sm}$ is smooth we may choose open subspaces $U$ and $V$
with coordinates $\{p_i\}$ and $\{q_i\}$ respectively
such that every $s_n$ restricts to a map $U\rightarrow V.$
Let us organise the coordinates
into even and odd functions
$\{p_i\} = \{x_i, \eta_j\}$, $\{q_i\} = \{y_i, \xi_j\}.$

In these coordinate charts the condition for $s_n$ to lie in $\Aut(\Phi)_0$
reads
\[
 J(s_n) =  A\cdot s_n^\sharp(B)
\]
for certain invertible matrices $A$ and $B$ where
$J(s_n) = (\partial_{p_i} s_n^\sharp(q_i)).$
Starting from ${s^{(0)}}^\sharp := \tilde{s}^\sharp$,
we inductively define $(s^{(k)})^\sharp\colon \mathcal{O}_M/\mathcal{J}^{k+1}(V_0)\rightarrow \mathcal{O}_M/\mathcal{J}^{k+1}(U_0)$
with reductions $\tilde{s}.$
The construction will be such that the following holds:
We have $(s_n^{(k)})^{\sharp}(f)\rightarrow (s^{(k)})^\sharp(f)$
for all $f\in (\mathcal{O}_M/\mathcal{J}^{k+1})(V_0).$
Here, $(\mathcal{O}_M/\mathcal{J}^{k+1})(U_0)$
is considered as a subspace of $\bigoplus\mathcal{O}_{M_0^{sm}}(U_0)$,
where the number of summands is $2^m.$

The respective lifts will be determined by the Jacobian $J(s^{(k)})$
which naturally has values in matrices of the form
\[ \left( \begin{array}{cc}
\mathcal{O}_M/\mathcal{J}^{k+1} & \mathcal{O}_M/\mathcal{J}^{k}  \\
\mathcal{O}_M/\mathcal{J}^{k+1} & \mathcal{O}_M/\mathcal{J}^{k}\\
\end{array} \right).\] 
There is a projection from $\mathcal{O}_M/\mathcal{J}^{k+1}$-valued matrices
to such matrices. The image of a matrix $A$ will be denoted by $A^\sim.$

Assume that $k$ is even and $(s^{(k)})^{\sharp}$ has been constructed such that
\[
J(s^{(k)}) = (A^{(k)} (s^{(k)})^\sharp B^{(k)})^\sim.
\]
First, we have to set $(s^{(k+1)})^\sharp(q_i) = (s^{(k)})^\sharp(q_i)$ for $q_i$ even.
The odd-odd sector of the Jacobian determines $(s^{(k+1)})^\sharp(q_i)$ for $q_i$ odd:
In fact, it follows that
\begin{align*}
\partial_{\eta_i} (s^{(k+1)})^{\sharp}(\xi_j) & \stackrel{!}{=} (A^{(k+1)} (s^{(k+1)})^\sharp B^{(k+1)})_{ij}^\sim\\
                                            & = (A^{(k)} (s^{(k)})^\sharp B^{(k)})_{ij}\\
                                            & = \lim_n (A^{(k)} (s_n^{(k)})^\sharp B^{(k)})_{ij}\\
                                            & = \lim_n  \partial_{\eta_i} (s_n^{(k+1)})^{\sharp}(\xi_j).
\end{align*}

These derivatives fit
together to give a well-defined $(s^{(k+1)})^\sharp(\xi_j)$
since the different partial derivatives fit together, that is, for any multiindex $I$, $|I| = k+1$, with $\eta_i$, $\eta_{i'}\in I$,
we have
\[
\partial_{I-\{\eta_i\}} \partial_{\eta_i} ((s^{(k+1)})^{\sharp}(\xi_j)) =  \epsilon_{i, i'}\partial_{I-\{\eta_{i'}\}} \partial_{\eta_{i'}} ((s^{(k+1)})^{\sharp}(\xi_j))
\]
since this equality holds for all $s_n.$
With this definition we have $(s^{(k+1)})^\sharp = \lim_n (s_n^{(k+1)})^\sharp$, which ensures
$J(s^{(k+1)}) = (A^{(k+1)} (s^{(k+1)})^\sharp B^{(k+1)})^\sim$
by continuity.

If $k$ is odd and $(s^{(k)})^{\sharp}$ has been constructed in such a way that
\[
J(s^{(k)}) = (A^{(k)} \cdot (s^{(k)})^\sharp B^{(k)})^\sim,
\]
then one can proceed similarly.
There are no changes in the pullbacks of odd coordinates and
the pullbacks of the even coordinates are forced by the respective equation for the odd-even sector of the
Jacobian.
Again, $(s^{(k)})^\sharp = \lim (s_n^{(k)})^\sharp.$
This yields the construction of $s|_U\colon \mathcal{O}_{V}\rightarrow (s_0)_*\mathcal{O}_{U}.$
By uniqueness (Lemma \ref{lem: underlying map injective}),
these $s|_U$ coincide where two coordinates patches overlap,
and so we obtain the desired $s\colon M\rightarrow M.$

The statement concerning the topology is clear from the above considerations.
\end{proof}

\begin{proof}[Proof of Lemma \ref{lem: action}]
Similary as in the preceding lemma, starting from $((a')^{(0)})^\sharp := (a_0')^\sharp$,
we inductively construct
$((a')^{(k)})^\sharp\colon \mathcal{O}_M/\mathcal{J}^{k+1} \rightarrow (a_0')_*\mathcal{O}_{\Aut(\Phi)_0^{sm}\times M}/\mathcal{J}^{k+1}.$
First we choose some neighbourhoods $W\subseteq \Aut(\Phi)_0^{sm}$ and
$U$, $V\subseteq M$
given by coordinates $\{p_i\} = \{x_i, \eta_j\}$ and $\{q_i\} = \{y_i, \xi_j\}$
such that $a_0'$ restricts to
\[
\xymatrix{
 W \times U_0 \ar[r] & V_0.
}
\]

Then, if $A$ and $B$ are as in the proof above, the map $(a')^\sharp$ to be constructed
will be characterized by
\[
 J^{\mathrm{res}}(a') = A (a')^\sharp(B).
\]
where $J^{\mathrm{res}}(a')$ denotes the submatrix $(\partial_{p_i} (a')^{\sharp}(q_j))$
of the Jacobian.
So, assume $((a')^{(k)})^\sharp$ is constructed such that
\[
 J((a')^{(k)}) = (A^{(k)} ((a')^{(k)})^\sharp B^{(k)})^\sim.
\]
Suppose first that $k$ is even.
Looking at the odd-odd sector of the Jacobian gives
\begin{align*}
 \partial_{\eta_i}((a')^{(k+1)})^\sharp(\xi_j) =  (A^{(k)} ((a')^{(k)})^\sharp B^{(k)})_{ij}.
\end{align*}
These fit together since they do so pointwise,
i.e.~after specializing to any element $s\in \Aut(\Phi)_0^{sm}.$
Moreover, the identity for the Jacobian holds true, since it holds true pointwise.
\end{proof}

\begin{proof}[Proof of Lemma \ref{lem: aut Phi finite dimensional}]
We follow \cite[Lemma 2.4]{Ballmann}.
If $X\in\mathfrak{aut}(\Phi)$, then $X_{V_\R} := X\otimes\id_{V_\R}$ is a vector field
on $M\times \A(V_\R)$ which commutes with $Z$ (as is seen in local coordinates).

Let $\Theta^Z$ be the maximal flow of the even real vector field $Z$
(see Theorem \ref{thm: maximal flow}), defined on
$\mathcal{V}\subseteq \R\times M\times \A(V_\R)$,
and consider the composite
${\Theta^Z}' = {\mathop{\mathrm{pr}}}_1 \circ \Theta^Z\colon \mathcal{V}\rightarrow M.$
Note that $\{1\}\times M\times \{0\}\subseteq \mathcal{V}$, so
${\Theta^Z}'(1, -)$ is defined on an open neighbourhood of $M\times \{0\}.$

We have the following:
For all $p\in M_0$ there exists an open neighbourhood $p\in U_0\subseteq M_0$ and and
open subspace $V'\subseteq \A(V_\R)$ such that
for all $q\in U_0$ the map ${\Theta^Z}'(1, q, -)\colon V'\rightarrow M$ is a diffeomorphism onto an open subspace.

Indeed, the map $(\mathop{\mathrm{pr}}_1, {\Theta^Z}'(1,-))$ is defined on an open neighbourhood of $M\times \{0\}$
and its differential at $(p, 0)$ is of the form
\[
\left(\begin{array}{cc}
 1 & 0\\
 * & Z
\end{array}
\right),
\]
which is invertible.

Now, assume $\mathrm{inj}_p^\sharp \circ X = X(p) = 0.$
Choose open subspaces $U\subseteq M$
and $V'\subseteq V$ such that $p\in U_0$ and $0\in V'$
such that
$\varphi := \Theta^Z(1, p, -)\colon V'\rightarrow U$ is an isomorphism.
Then
\begin{align*}
 \varphi^\sharp \circ X & = \mathrm{inj}_p^\sharp\circ \Theta^Z(1, -, -)^\sharp\circ {\mathop{\mathrm{pr}}}_1^\sharp \circ X\\
                        & = \mathrm{inj}_p^\sharp\circ \Theta^Z(1, -, -)^\sharp\circ X_V\circ {\mathop{\mathrm{pr}}}_1^\sharp\\
                        & = \mathrm{inj}_p^\sharp\circ X_V\circ  \Theta^Z(1, -, -)^\sharp\circ {\mathop{\mathrm{pr}}}_1^\sharp\\
                        & = 0,
\end{align*}
where we have used Proposition \ref{prop: commuting flow and vector fields}
in the third line.
Since $\varphi^\sharp$ is invertible, it follows that $X = 0$ on $U.$

This shows that the non-empty closed set $\{p\in M_0| X(p) = 0\}$
is contained in the open subset $\{p\in M_0| X_p = 0\}.$
The converse inclusion holds always, so that both subsets agree
and are open and closed, hence they are all of $M_0$
if $M_0$ is connected.
More generally, the argument shows that $X(p) = 0$ implies
$X = 0$ on the connected component containing $p.$
\end{proof}

\section{$G$-structures of finite type on real supermanifolds}
\label{sec: G - structures of finite type on real supermanifolds}
Results analogous to those obtained in the mixed case hold for real supermanifolds.
Their proofs are simplifications of our previous arguments, so we only briefly comment
on them to provide precise statements for future reference.

A real super vector space is $\Z/2$-graded real vector space $V = V_{\bar{0}}\oplus V_{\bar{1}}.$
The model spaces for real supermanifols are the affine spaces
$\A(V)  = (V_0, C^\infty_{V_0}(-)\otimes \bigwedge V_1^*).$

\begin{Def}
A \emph{real supermanifold} consists of
a locally ringed superspace $M = (M_0, \mathcal{O}_M)$ over $\R$ with
second countable Hausdorff base
that is locally isomorphic to
$\A(V)$ for some real super vector space $V.$
The full subcategory of locally ringed superspaces
over $\R$ with objects real supermanifolds
is denoted by $\SMan_\R.$
\end{Def}

Similarly as in the case of supermanifolds, a real supermanifold has a
frame bundle $L(M)$, which is a principal $GL(V)$-bundle.
In the real category, $GL(V)$ is a real Lie supergroup and so
$L(M)$ is an object in the category of real manifolds.
Furthermore, given a $G$-structure, i.e. a closed subgoup $G\leqslant GL(V)$
and a reduction $P$ of $L(M)$ to $G$,
one can define the prolongation without leaving the real category.

One has a functor $i\colon \Man\rightarrow \SMan_\R$
and similarly as in the case of mixed supermanifolds, one obtains the following result.

\begin{Thm}
Suppose $P\rightarrow M$ is a 
$G$-structure of finite type.
Then $i^* \uAut(P)$ is representable
and its Lie algebra consists of the complete infinitesimal
automorphisms of $P$, denoted by $\mathfrak{aut}(P)_{\bar{0}}^{c}.$
Moreover, $\mathfrak{aut}(P)$ is finite dimensional.
The functor $\uAut(P)$ is representable if 
and only if $\mathfrak{aut}(P)_{\bar{0}}^{c} = \mathfrak{aut}(P)_{\bar{0}}.$
\end{Thm}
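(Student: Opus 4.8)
The plan is to reduce the statement to the mixed case treated in Theorem \ref{thm: main}, which is strictly more general since a real supermanifold is the extreme case of a mixed supermanifold where $V_\C = V_{\bar 1}$. First I would observe that in the real category there are no mixed complications: every real super vector space $V$ gives rise to a $cs$ manifold $\A(V)$ with $V_\C = V_{\bar 1}$, so the ambient data $(V, V_\R, V_\C)$ is automatically a mixed super vector space and the admissibility condition is vacuous. Indeed $(\mathfrak{g}^{(k)})_{\bar 0, \R} = (\mathfrak{g}^{(k)})_{\bar 0}$ for all $k$ because all of $\mathfrak{g}^{(k)}$ is real; hence every $G$-structure of finite type on a real supermanifold is admissible when regarded in $\SMan^\mu$, and the real prolongation coincides with the ordinary prolongation. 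Likewise the functor $\mathcal{O}_{M,\R} = \mathcal{O}_M$, $\mathcal{T}_{M,\R} = \mathcal{T}_M$, and $\mathfrak{aut}(P)_{\bar 0,\R} = \mathfrak{aut}(P)_{\bar 0}$, so the decomposability condition $\mathfrak{aut}(P)^{c,d}_{\bar 0} = \mathfrak{aut}(P)_{\bar 0}$ of Theorem \ref{thm: main} collapses to the completeness condition $\mathfrak{aut}(P)^c_{\bar 0} = \mathfrak{aut}(P)_{\bar 0}$.

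With this dictionary in place, the strategy is to run the same argument as for Theorem \ref{thm: main}, sector by sector. One reduces to a $\{1\}$-structure $\Phi$ on $P^{(k-1)}$ by choosing $k$ with $G^{(k+l)} = \{1\}$ for all $l \geq 0$; the embedding $\uAut(P) \hookrightarrow \uAut(P^{(k)})$ from prolongation functoriality then reduces everything to the parallelization case. For a parallelization $\Phi\colon \underline{V}_M \to TM$ one puts on $\Aut(\Phi)_0 := \uAut(\Phi)(*)$ the Kobayashi topology coming from the closed injection $\Aut(\Phi)_0^{sm} \to M_0^{sm}$, $s \mapsto s(x)$; the real analogues of Lemmas \ref{lem: underlying map injective}, \ref{lem: forgetful cont and closed}, \ref{lem: action}, \ref{lem: aut Phi finite dimensional} go through verbatim, only simpler, because there is no $\partial_{\bar z}$-closedness condition to carry around. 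In particular Lemma \ref{lem: aut Phi finite dimensional} gives that evaluation $\mathfrak{aut}(\Phi) \to T_pM$ is injective on each connected component, so $\mathfrak{aut}(P)$ is finite-dimensional; and the flows of even vector fields (Section \ref{subsec: flows of even real vector fields}, which is stated unconditionally) identify $\mathrm{Lie}_\R(\Aut(\Phi)_0^{sm})$ with $\mathfrak{aut}(\Phi)^c_{\bar 0}$.

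The remaining point is the representability dichotomy. For $i^*\uAut(P)$, one builds $\Aut(P)_0$ as a Lie group exactly as in the mixed case and shows $i^*\uAut(P) \cong \mathsf{M}(-,\Aut(P)_0)$ by the same argument: a map $T_0 \to \Aut(P)_0$ produces a family of automorphisms via the action, and conversely an element of $\uAut(P)(T_0)$ is recovered pointwise and is smooth since its composite with evaluation at any point of $M_0$ is. For the full functor: if $\uAut(P)$ is representable then its higher points in particular contain all of $\mathfrak{aut}(P)_{\bar 0}$ as infinitesimal automorphisms over $\A(\R^{0|1})$, and these must be complete, forcing $\mathfrak{aut}(P)^c_{\bar 0} = \mathfrak{aut}(P)_{\bar 0}$; conversely, under this hypothesis $(\mathfrak{aut}(P), \Aut(P)_0)$ is a super pair in the real sense, so it integrates to a real Lie supergroup $\Aut(P)$ (the real analogue of Proposition \ref{prop: equivalence pairs and groups}), and Lemma \ref{lem: family of automorphisms from group action} shows the induced map $\SMan_\R(-,\Aut(P)) \to \underline{\mathrm{Diff}}(M)$ is a local, hence global, isomorphism onto $\uAut(P)$. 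I expect the only genuine work to be checking that each cited lemma and each step of Theorem \ref{thm: main} does in fact simplify rather than break in the real setting — there is no single hard step, but one must verify that nothing used about the mixed structure was essential, in particular that the flow results and the Kobayashi–Ballmann topology arguments are insensitive to replacing $cs$ manifolds by real supermanifolds, which they are.
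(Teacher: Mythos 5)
Your proposal is in substance the paper's own (implicit) proof: the paper disposes of the real case by saying the arguments for Theorem \ref{thm: main} go through with all mixed complications stripped out, and that is exactly what you do sector by sector --- prolongation down to a $\{1\}$-structure, the Kobayashi--Ballmann topology on $\Aut(\Phi)_0$, the real analogues of Lemmas \ref{lem: underlying map injective}--\ref{lem: aut Phi finite dimensional}, flows of even vector fields for the identification of the Lie algebra with the complete infinitesimal automorphisms, and integration of the pair $(\mathfrak{aut}(P),\Aut(P)_0)$ for the representability dichotomy. One caveat: your opening claim that a real supermanifold is the extreme case $V_\C=V_{\bar 1}$ of a mixed supermanifold is not correct --- that extreme case is the $cs$ category $\SMan$, in which $\mathcal{T}_{M,\R}\neq\mathcal{T}_M$ and the decomposability condition is \emph{not} vacuous, whereas $\SMan_\R$ is defined over $\R$ and is not a subcategory of $\SMan^\mu$, so Theorem \ref{thm: main} does not formally subsume the statement; since you never actually use this purported reduction but rerun the proof inside the real category (where $\mathcal{T}_{M,\R}=\mathcal{T}_M$ genuinely holds and admissibility is not needed), the argument itself is unaffected.
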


\section{Examples of $G$-structures of finite type}
\label{sec: Examples of G - structures of finite type}
\subsection{Riemannian structures on supermanifolds}
In this section, we treat
Riemannian structures on a supermanifold $M$
locally modelled on the mixed super vector space $(V, V_\R, V_\C).$

\subsubsection{Even Riemannian structures}
Consider an even non-degenerate bilinear
form $J\colon V\otimes V\rightarrow \C^{1|0}$
with components $J_i\colon V_{\bar{i}}\otimes V_{\bar{i}}\rightarrow \C$
($i\in\{0, 1\}$).
There is a Lie supergroup $OSp(V, J)$ which represents automorphisms of the trivial vector
bundle endowed with $J$:
\[
 OSp(V, J)(S) = \{f\in GL(V)(S)\mid (S\times J)\circ (f\otimes f) = (S\times J)\}.
\]
\begin{Prop}
$ $
\begin{itemize}
\item[(a)] Reductions of $L(M)$ to $OSp(V, J)$ are in bijective correspondence with even non-degenerate supersymmetric maps of vector bundles
$TM\otimes TM\rightarrow \underline{\C^{1|0}}_M.$
\item[(b)] $OSp(V, J)\leqslant GL(V)$ is of finite type, more precisely $\mathfrak{osp}(V, J)^{(1)} = 0.$
\end{itemize}
\end{Prop}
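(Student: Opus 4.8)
The plan is to treat the two parts separately, as they are of rather different nature. For part (a), I would establish the standard dictionary between reductions of structure group and tensor fields. A reduction $P$ of $L(M)$ to $OSp(V,J)$ is, by definition, a principal $OSp(V,J)$-subbundle; using the functor-of-points description of $L(M)$ via pullback squares \eqref{functor of points 1 jets} and the definition of $OSp(V,J)$ as the stabilizer of $S\times J$, an $S$-point of $P$ is a frame $\varphi\colon \underline{V}_S\to f^*(TM)$ well-defined up to the $OSp(V,J)$-action, i.e. up to transformations preserving $J$. Transporting $J$ along $\varphi^{-1}\otimes\varphi^{-1}$ therefore yields a well-defined even non-degenerate supersymmetric pairing $f^*(TM)\otimes f^*(TM)\to \underline{\C^{1|0}}_S$, and this is functorial in $S$, hence gives the asserted bundle map $g\colon TM\otimes TM\to\underline{\C^{1|0}}_M$. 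Conversely, given such a $g$, one defines $P$ fibrewise as the frames taking $J$ to $g$; local triviality follows from the fact that any two non-degenerate even supersymmetric forms on $V$ are related by an element of $GL(V)_0$ (Sylvester-type normal form, split into the even-even orthogonal and odd-odd symplectic blocks), so $P$ is locally isomorphic to $U\times OSp(V,J)$. The two constructions are mutually inverse by inspection. I expect this part to be routine, the only mild subtlety being bookkeeping of signs in the supersymmetry condition $g(v,w)=(-1)^{|v||w|}g(w,v)$ and checking that "non-degenerate" is preserved, which it is since $\varphi$ is an isomorphism.

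For part (b), the claim is the purely linear-algebraic statement $\mathfrak{osp}(V,J)^{(1)}=0$, which by the inductive description then forces all higher prolongations to vanish, so $OSp(V,J)$ is of finite type with $k=1$ (indeed $\mathfrak g^{(1)}=0$ already at the first step). Here $\mathfrak g=\mathfrak{osp}(V,J)\subseteq\mathfrak{gl}(V)$ consists of those $A$ with $J(Av,w)+(-1)^{|A||v|}J(v,Aw)=0$, and $\mathfrak g^{(1)}$ is the space of $S\in\uHom(V,\mathfrak g)$ with $S(v)(w)=(-1)^{|v||w|}S(w)(v)$. I would run the classical argument adapted to the super sign rule: for $S\in\mathfrak g^{(1)}$ and homogeneous $u,v,w$, write $T(u,v,w):=J(S(u)v,w)$. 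The $\mathfrak g$-condition (antisymmetry of $S(u)\in\mathfrak{osp}$ with respect to $J$) makes $T$ graded-antisymmetric in $(v,w)$, while the $\mathfrak g^{(1)}$-condition makes $T$ graded-symmetric in $(u,v)$. The usual three-term alternating sum — symmetrize in $(u,v)$, use antisymmetry in $(v,w)$, permute cyclically — then shows $T\equiv 0$; since $J$ is non-degenerate this gives $S(u)v=0$ for all $u,v$, i.e. $S=0$.

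The main obstacle is making sure the sign manipulations in the "$T=0$" computation are genuinely consistent in the $\Z/2$-graded setting — the interplay of the three Koszul signs coming from (i) $J$ being even, (ii) the $\mathfrak{osp}$-condition, and (iii) the symmetry defining $\mathfrak g^{(1)}$ must conspire exactly as in the ungraded case, and one should double-check this on the homogeneous generators rather than trust the formal analogy. I would phrase the computation as: $T(u,v,w)=(-1)^{|u||v|}T(v,u,w)=-(-1)^{|u||v|+|v||w|}T(v,w,u)$, then iterate the cyclic permutation twice more to return to $\pm T(u,v,w)$ with a net sign of $-1$, forcing $2T=0$ and hence (in characteristic $0$) $T=0$. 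Once the linear algebra is pinned down, the finite-type conclusion is immediate from the definitions of admissibility and of $\mathfrak g^{(k)}$ given earlier, together with the observation that $\mathfrak g^{(1)}=0$ makes admissibility vacuous at every stage.
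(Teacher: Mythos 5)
Your proposal matches the paper's proof in both parts: (a) is the same orthonormal-frame dictionary (the paper simply declares the frames of the reduction orthonormal and conversely takes orthonormal frames of a given metric), and (b) is exactly the paper's computation with $T^i_{jk}=J(T(v_i)v_j,v_k)$, graded symmetry in the first two slots and graded antisymmetry in the last two, killed by the cyclic sign chase. Note only that your displayed intermediate sign should be $-(-1)^{|u||v|+|u||w|}T(v,w,u)$ rather than $-(-1)^{|u||v|+|v||w|}T(v,w,u)$; the net sign after the three cyclic steps is still $-1$, so the conclusion $T=0$, hence $S=0$, is unaffected.
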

\begin{proof}
Given an $OSp(V, J)$-structure on $L(M)$, one constructs a metric on $TM$ by declaring the given bases
to be orthonormal. Conversely, given a metric, the orthonormal bases give rise to an $OSp(V, J)$-structure.
This shows the first part.
In order to show the second part, we observe that $\mathfrak{osp}(V, J)$ consists
of those endomorphisms $A\colon V\rightarrow V$ whose homogeneous components $A_i$
satisfy $J(A_iv, w) = -(-1)^{|A_i||v|}J(v, A_iw).$
Using a homogeneous basis $\{v_i\}$, the conditions for $T$ to lie in $\mathfrak{osp}(V, J)^{(1)}$
read $T^{i}_{jk} = (-1)^{|v_i||v_j|} T^{j}_{ik}$ and $T^{i}_{jk} = - (-1)^{|v_j||v_k|} T^{i}_{kj}$,
where we set $T^i_{jk} =J(T(v_i)v_j, v_k).$
Both together imply $T^i_{jk}  = 0.$
\end{proof}

The underlying complex group of $OSp(V, J)$ is the product of the complex groups $O(V_{\bar{0}}, J_0)\times Sp(V_{\bar{1}}, J_1).$
Assume that $J_0$ restricts to a non-degenerate bilinear form $J_{0, \R}\colon (V_{\bar{0}})_\R\otimes (V_{\bar{0}})_\R\rightarrow \R.$
Such a $J$ gives rise to the mixed real form
$OSp(V, J)_\R\rightarrow OSp(V, J)$
with underlying group $O((V_{\bar{0}})_\R, J_{0, \R})\times Sp(V_{\bar{1}}, J_1).$
Moreover, $OSp(V, J)_\R\leqslant GL(V)_\R.$
\begin{Lemma}
The $OSp(V, J)_\R$-structures on $M$ are in bijective correspondence
with
even non-degenerate supersymmetric maps of vector bundles
$TM\otimes TM\rightarrow \underline{\C^{1|0}}_M$
whose restriction to $(TM)_{\bar{0}}\otimes (TM)_{\bar{0}}\subseteq i^*(TM\otimes TM)$
induce a metric $T(M_0)_\R\otimes T(M_0)_\R\rightarrow \underline{\R^1}_{M_0}$
of the same signature as $J_{0, \R}$ on the underlying real manifold $M_0.$
\end{Lemma}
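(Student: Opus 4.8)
The plan is to bootstrap from part~(a) of the preceding Proposition, reducing the problem to deciding which $OSp(V,J)$-structures admit the additional reduction to the mixed real form. Since $OSp(V,J)_\R\to OSp(V,J)$ is a mixed real form, the definition of a $G$-structure (a reduction of $L^\mu(M)_\R$, equivalently a reduction $P$ of $L(M)$ whose body factors through $L^\mu(M)_{0,\R}$) together with Lemma~\ref{lem: real forms of principal bundles} identifies an $OSp(V,J)_\R$-structure on $M$ with an $OSp(V,J)$-structure $\tilde P$ together with a reduction of $\tilde P_0$ to $OSp(V,J)_{0,\R}$ inside $L^\mu(M)_{0,\R}$; moreover inducing up along $OSp(V,J)_\R\hookrightarrow OSp(V,J)$ is injective on structures, as this morphism is a closed embedding on bodies and an isomorphism on tangent spaces. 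Under part~(a), $\tilde P$ corresponds to an even non-degenerate supersymmetric $g\colon TM\otimes TM\to\underline{\C^{1|0}}_M$ and $\tilde P_0$ is the bundle of $g$-orthonormal frames of $i^*TM=TM_0\oplus(TM)_{\bar 1}$. So it remains to characterise the $g$ for which the intersection $\tilde P_0\cap L^\mu(M)_{0,\R}$ is a principal $OSp(V,J)_{0,\R}$-subbundle of $\tilde P_0$.

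I would first record that the standing hypothesis, that $J_0$ restricts to a non-degenerate form $J_{0,\R}$ on $(V_{\bar 0})_\R$, forces $(V_\C)_{\bar 0}=0$: for $z\in(V_\C)_{\bar 0}$ and $w\in(V_{\bar 0})_\R$ one has $J_{0,\R}(iz,w)=iJ_0(z,w)=iJ_{0,\R}(z,w)$, and since the left-hand side is real this gives $J_{0,\R}(z,w)=0$ for all $w$, whence $z=0$. Thus the mixed structure of $M$ is trivial in even directions, $T(M_0)_\C=0$, and $L^\mu(M)_{0,\R}$ consists of the frames carrying the real form $(V_\R)_{\bar 0}\subseteq V_{\bar 0}$ onto $T(M_0)_\R\subseteq TM_0$, with no constraint on the odd block since $(V_\R)_{\bar 1}=V_{\bar 1}$. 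If a local section $\varphi$ of $\tilde P_0\cap L^\mu(M)_{0,\R}$ exists, its even part restricts to a real-linear isomorphism $(V_\R)_{\bar 0}\to T(M_0)_\R$ (a dimension count, both being real forms of the same complex space), and for $v,v'\in(V_\R)_{\bar 0}$ we get $g_0(\varphi v,\varphi v')=J_0(v,v')=J_{0,\R}(v,v')\in\R$; hence $g_0$ restricts to a real-valued fibre metric on $T(M_0)_\R$ pulled back fibrewise to $J_{0,\R}$, in particular of signature $\mathrm{sign}(J_{0,\R})$. This gives the necessity of the stated condition.

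For the converse, suppose $g_0$ restricts to a real-valued metric $g_{0,\R}$ on $T(M_0)_\R$ of signature $\mathrm{sign}(J_{0,\R})$. Fibrewise Gram--Schmidt over $\R$ yields local orthonormal frames of $(T(M_0)_\R,g_{0,\R})$, and complexifying such a frame gives a $g_0$-orthonormal frame of $TM_0=\C\otimes T(M_0)_\R$ adapted to the real structure, since $g_0$ is the $\C$-bilinear extension of $g_{0,\R}$; on the odd block, the fibrewise non-degenerate antisymmetric complex form $g_1$ admits local symplectic frames. Assembling these produces local sections of $\tilde P_0\cap L^\mu(M)_{0,\R}$, any two of which differ fibrewise by an element of $O((V_{\bar 0})_\R,J_{0,\R})\times Sp(V_{\bar 1},J_1)=OSp(V,J)_{0,\R}$, so the intersection is the desired reduction; pulling back along the square defining $L^\mu(M)_\R$ then yields an $OSp(V,J)_\R$-structure $P$ over $g$, unique by the injectivity above, and varying $g$ gives the asserted bijection. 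The point demanding the most care is the penultimate one: showing that the simultaneously $g$-orthonormal and real-adapted frames form a single local $OSp(V,J)_{0,\R}$-orbit --- a genuine principal subbundle rather than just a closed subset. This rests on the classical transitivity of the real orthogonal group of fixed signature on orthonormal bases and of the complex symplectic group on symplectic bases, plus the routine compatibility of the pullback squares from Section~\ref{sec: Recollections on supergeometry}.
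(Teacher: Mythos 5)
Your proof is correct, and it is the natural unpacking of the paper's one-line justification (``this follows readily from the definition of $OSp(V,J)_\R$''): reduce via part (a) of the preceding Proposition to the existence of a body reduction $\tilde P_0\cap L^\mu(M)_{0,\R}$ to $O((V_{\bar 0})_\R,J_{0,\R})\times Sp(V_{\bar 1},J_1)$ and then apply Lemma \ref{lem: real forms of principal bundles}. No discrepancy with the paper's approach.
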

\begin{proof}
 This follows readily from the definition of $OSp(V, J)_\R.$
\end{proof}

From Theorem \ref{thm: main}, we obtain the following result.
\begin{Thm}
Let $M$ be a supermanifold with an $OSp(V, J)_\R$-structure.
If $M_0$ is complete and every Killing vector field is decomposable, then the isometry group functor $\uAut(P)$
is representable.
\end{Thm}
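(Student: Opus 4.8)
The plan is to obtain this as a corollary of Theorem~\ref{thm: main}, so the work divides into showing that the $OSp(V,J)_\R$-structure $P$ is an admissible $G$-structure of finite type, and then checking the representability criterion that theorem provides. Finite type is immediate from the preceding proposition: since $\mathfrak{osp}(V,J)^{(1)} = 0$, all higher prolongations of $\mathfrak{osp}(V,J)$ vanish, so $G^{(k)} = \{1\}$ and a fortiori $G_\R^{(k)} = \{1\}$ for every $k \geq 1$. For admissibility only the level $k = 0$ is at issue, where $(\mathfrak{osp}(V,J))_{\bar 0,\R} = \mathfrak{o}((V_{\bar 0})_\R, J_{0,\R}) \oplus \mathfrak{sp}(V_{\bar 1}, J_1)$ sits inside $(\mathfrak{osp}(V,J))_{\bar 0} = \mathfrak{o}(V_{\bar 0}, J_0) \oplus \mathfrak{sp}(V_{\bar 1}, J_1)$; since $M$ is a supermanifold we have $V_\C = V_{\bar 1}$, and the $\C$-bilinear form $J_0$ is necessarily the complexification of its assumed real form $J_{0,\R}$, so that $\mathfrak{o}(V_{\bar 0}, J_0) = \C \otimes \mathfrak{o}((V_{\bar 0})_\R, J_{0,\R})$ and $(\mathfrak{osp}(V,J))_{\bar 0,\R}$ is a mixed real form of $(\mathfrak{osp}(V,J))_{\bar 0}$. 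Hence $P$ is admissible of finite type.

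Theorem~\ref{thm: main} then yields that $\mathfrak{aut}(P)$ is finite-dimensional, that $i^* \uAut(P)$ is representable, and that $\uAut(P)$ itself is representable if and only if $\mathfrak{aut}(P)^{c,d}_{\bar 0} = \mathfrak{aut}(P)_{\bar 0}$, i.e.\ if and only if every $X \in \mathfrak{aut}(P)_{\bar 0}$ admits a decomposition $X = X_1 + iX_2$ with $X_1, X_2$ \emph{complete} real infinitesimal automorphisms of $P$. By part~(a) of the preceding proposition $P$ corresponds to an even non-degenerate supersymmetric form $g \colon TM \otimes TM \to \underline{\C^{1|0}}_M$, so $\mathfrak{aut}(P)$ is precisely the Lie superalgebra of Killing vector fields of $g$. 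The decomposition $X = X_1 + iX_2$ with $X_1, X_2 \in \mathfrak{aut}(P)_{\bar 0,\R}$ is furnished by the hypothesis that every Killing vector field is decomposable, so it only remains to see that every real even Killing vector field of $g$ is complete.

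For this I would first observe that an even real vector field on $M$ is complete precisely when its reduction to $M_0$ is: in local coordinates the flow equation splits into the flow on $M_0$ governed by the reduced field together with a linear system for the nilpotent part along that flow, which has global solutions whenever the reduced flow does (cf.\ the flow theory of Section~\ref{subsec: flows of even real vector fields}). The reduction of a real even Killing vector field of $g$ is a Killing field of the induced metric $g_0$ on $M_0$, and since $M_0$ is complete it is complete by the classical argument of Kobayashi~\cite{Kobayashi} and Ballmann~\cite{Ballmann} --- equivalently, it prolongs to an infinitesimal automorphism of the parallelism on $P^{(k-1)}_\R$ whose underlying parallelism on $(P^{(k-1)}_\R)_0$ is complete because $M_0$ is, and an infinitesimal automorphism commuting with a complete parallelism is complete, its local flow having time of existence constant on connected components. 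Therefore $X_1$ and $X_2$ are complete, $\mathfrak{aut}(P)^{c,d}_{\bar 0} = \mathfrak{aut}(P)_{\bar 0}$, and representability of $\uAut(P)$ follows from Theorem~\ref{thm: main}.

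The genuinely new content is confined to the completeness step, and within it the only delicate point is the transfer of completeness from the reduced Killing field on $M_0$ to the even real vector field on $M$; the classical input that an infinitesimal automorphism of a complete parallelism is itself complete is standard, and everything else is a verification that the hypotheses of Theorem~\ref{thm: main} and of the preceding proposition hold.
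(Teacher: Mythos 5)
Your proof is correct and is exactly the argument the paper leaves implicit when it states this theorem as a direct consequence of Theorem \ref{thm: main}: finite type and admissibility come from $\mathfrak{osp}(V,J)^{(1)}=0$ and the fact that $J_0$ is the complexification of $J_{0,\R}$, while completeness of real even Killing fields follows from the classical result on $(M_0,g_0)$ combined with Theorem \ref{thm: maximal flow}, which identifies the base of the maximal flow domain of an even real vector field with the maximal flow domain of its reduction. The only caveat --- an imprecision in the statement itself rather than a gap in your argument --- is that the Kobayashi/Ballmann completeness argument for Killing fields requires $J_{0,\R}$ to be definite, i.e.\ the genuinely Riemannian rather than general pseudo-Riemannian case.
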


\begin{Rmk}
In the real category the only obstruction for representability
is completeness of the Killing vector fields.
In this setting, an isometry group was constructed by Goertsches \cite{Goertsches}.
(The completeness condition seems to be assumed implicitly.)
Our results in the real case give a rederivation of this result.
\end{Rmk}
\begin{ex}
The isometry group of $V$ with the $OSp(V, J)_\R$ as above is
$OSp(V, J)_\R\ltimes V_\R.$
\end{ex}

\subsubsection{Odd Riemannian structures}

In the super setting, there is an odd analogue of
a Riemannian structure, given by an odd non-degenerate supersymmetric bilinear
form $J\colon V\otimes V\rightarrow \C^{1|0}.$
The Lie supergroup $P(V, J)$ is defined by the functor
\[
 P(V, J)(S) = \{f\in GL(V)(S)\mid (S\times J)\circ (f\otimes f) = (S\times J)\}.
\]

Similar to the even case, one can show the following.
\begin{Prop}
$ $
\begin{itemize}
\item[(a)] The $P(V, J)$-structures on $L(M)$ and the odd non-degenerate supersymmetric maps of vector bundles
$TM\otimes TM\rightarrow \underline{\C^{1|0}}_M$ are in one-to-one correspondence.

\item[(b)] $P(V, J)\leqslant GL(V)$ is of finite type, more precisely $\mathfrak{p}(V, J)^{(1)} = 0.$
\end{itemize}
\end{Prop}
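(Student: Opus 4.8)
The plan is to transcribe the proof just given for the even case. Part (a) is the exact analogue of the $OSp(V,J)$ statement: given a reduction $P\subseteq L(M)$ to $P(V,J)$, an $S$-point of $P$ is a frame $\varphi\colon\underline{V}_T\to f^*(TM)$ determined up to the $P(V,J)$-action, and since $P(V,J)$ is by definition the stabilizer of $J$, the pushforward $\varphi_*J$ does not depend on the chosen representative; these glue to a well-defined odd, non-degenerate, supersymmetric map $g\colon TM\otimes TM\to\underline{\C^{1|0}}_M$. Conversely, from such a $g$ one forms the subfunctor of $L(M)$ of frames $\varphi$ with $\varphi^*g=J$; it is a reduction to $P(V,J)$, and it is locally non-empty because any odd non-degenerate supersymmetric form is locally equivalent to $J$ --- exactly the role played by orthonormal frames in the even case. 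The two constructions are mutually inverse.

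For part (b), differentiating the defining equation $(S\times J)\circ(f\otimes f)=S\times J$ at $f=\id$ identifies $\mathfrak{p}(V,J)$ with the $A\in\mathfrak{gl}(V)$ each of whose homogeneous components $A_\ell$ is $J$-skew-adjoint, $J(A_\ell v,w)=-(-1)^{|A_\ell||v|}J(v,A_\ell w)$ for homogeneous $v,w$ --- formally the same relation as for $\mathfrak{osp}$, the parity of $J$ entering only through its symmetry $J(v,w)=(-1)^{|v||w|}J(w,v)$. Since $\mathfrak{g}^{(1)}=\ker\big(\partial\colon\uHom(V,\mathfrak{g})\to\uHom(\Lambda^2V,V)\big)$, it suffices to show that $\partial$ is injective on $\uHom(V,\mathfrak{p}(V,J))$. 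Fixing a homogeneous basis $\{v_i\}$ and, for homogeneous $T\in\uHom(V,\mathfrak{p}(V,J))$, setting $T^i_{jk}:=J\big(T(v_i)(v_j),v_k\big)$ (so that $T\mapsto(T^i_{jk})$ is injective by non-degeneracy of $J$), the condition $T\in\mathfrak{p}(V,J)^{(1)}$ becomes, exactly as in the even case, the two relations $T^i_{jk}=(-1)^{|v_i||v_j|}T^j_{ik}$ and $T^i_{jk}=-(-1)^{|v_j||v_k|}T^i_{kj}$, the first from the super-symmetrizer defining $\mathfrak{g}^{(1)}$, the second from $T(v_i)\in\mathfrak{p}(V,J)$ together with the supersymmetry of $J$. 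Applying the two relations alternately around the three indices, three times each, returns $T^i_{jk}$ to $-T^i_{jk}$; hence $2T^i_{jk}=0$, so $T^i_{jk}=0$ and $T=0$. Thus $\mathfrak{p}(V,J)^{(1)}=0$, all higher prolongations vanish, and $P(V,J)\leqslant GL(V)$ is of finite type.

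The computation is structurally identical to the one for $\mathfrak{osp}(V,J)$, so the only point that genuinely needs care is the sign bookkeeping in the second relation: one must check that the two $(-1)^{|A||v|}$-type signs produced by $J$-skew-adjointness and by the supersymmetry of $J$ cancel, leaving a skew-symmetry $T^i_{jk}=-(-1)^{|v_j||v_k|}T^i_{kj}$ in the last two indices rather than a symmetry. This is the crux, because a symmetry there, combined with the symmetry in the first two indices, would make $T$ totally symmetric and the vanishing would fail; with the conventions fixed in the text it comes out skew, and the classical first-Bianchi-type cancellation then applies verbatim.
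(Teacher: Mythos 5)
Your proposal is correct and takes essentially the same route as the paper, which simply carries the even ($OSp(V,J)$) argument over: adapted frames versus the odd metric for part (a), and for part (b) the two index relations $T^i_{jk}=(-1)^{|v_i||v_j|}T^j_{ik}$ and $T^i_{jk}=-(-1)^{|v_j||v_k|}T^i_{kj}$, which together force $T^i_{jk}=0$ and hence $\mathfrak{p}(V,J)^{(1)}=0$ by non-degeneracy of $J$. Your check that the $(-1)^{|T(v_i)||v_j|}$-type signs from skew-adjointness and supersymmetry cancel, so that the parity of $J$ plays no role, is precisely the point that justifies the paper's "similar to the even case".
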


We have $P(V, J)_0 \cong GL(V_{\bar{0}})$, which comes with the mixed real form given by $GL((V_{\bar{0}})_\R)$
and thus gives rise to $P(V, J)_\R\leqslant GL(V)_\R.$

For any $P(V, J)$-structure $P$ on $M$,
we have that $P_0 \cong L(M_0)$ and hence, it admits the real form $P_{0, \R}\cong L(M_0)_\R.$
Now, one easily concludes the following.

\begin{Prop}
$P(V, J)_\R$-structures 
are in one-to-one correspondence with $P(V, J)$-structures.
\end{Prop}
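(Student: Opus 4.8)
The claim is that $P(V,J)_\R$-structures on $M$ are in one-to-one correspondence with $P(V,J)$-structures on $M$. Since a $P(V,J)_\R$-structure is by definition a reduction of $L^\mu(M)_\R$ to $P(V,J)_\R$, while a $P(V,J)$-structure is a reduction of $L(M)$ to $P(V,J)$, the content is that the extra mixed/real data carried by the subscript $\R$ is no data at all in this particular case. The key structural input has already been assembled just above the statement: $P(V,J)_0 \cong GL(V_{\bar 0})$, its chosen mixed real form is $GL((V_{\bar 0})_\R)$, and for any $P(V,J)$-structure $P$ on $M$ one has $P_0 \cong L(M_0)$, which admits the real form $P_{0,\R}\cong L(M_0)_\R$. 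So the plan is: first recall the definition of a $G$-structure as a reduction $P$ of $L(M)$ such that $P_0\to L(M)_0$ factors through $L^\mu(M)_{0,\R}$; then observe that for $G = P(V,J)$ this factorization condition is automatic, because $P(V,J)$ sees only the even part $V_{\bar 0}$ (it fixes the odd form, forcing the odd block to be determined by the even one) and the mixed structure on $V$ is carried entirely by $V_\C \subseteq V_{\bar 1}$-data, which here is $V_{\bar 1}$ itself; thus there is nothing to cut down.

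\textbf{Carrying it out.} First I would spell out the isomorphism $P(V,J)_0 \cong GL(V_{\bar 0})$: an element of $P(V,J)_0$ is an even automorphism of $V$ preserving the odd pairing $J$, and since $J$ identifies $V_{\bar 1}$ with $V_{\bar 0}^*$, the action on $V_{\bar 1}$ is forced to be the inverse transpose of the action on $V_{\bar 0}$; hence the group is just $GL(V_{\bar 0})$, a \emph{purely even, real} Lie group with no residual complex directions. Consequently the mixed real form is all of it, $P(V,J)_{0,\R} = P(V,J)_0$ as mixed Lie groups once one chooses $(V_{\bar 0})_\R = V_{\bar 0}$ (the relevant even model space is purely real here). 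Then, by Lemma \ref{lem: real forms of principal bundles} applied with $G = P(V,J)$ and $G_\R = P(V,J)_\R$, every reduction $P_{0,\R}$ of $P_0$ to $P(V,J)_{0,\R}$ pulls back to a $P(V,J)_\R$-bundle $P_\R$; but since $P(V,J)_{0,\R} = P(V,J)_0$, the reduction $P_{0,\R}$ is just $P_0$ itself, and the pullback $P_\R$ is canonically $P$. Conversely, every $P(V,J)_\R$-structure, being a reduction of $L^\mu(M)_\R$ to $P(V,J)_\R$, induces by inducing up along $P(V,J)_\R \to P(V,J)$ a $P(V,J)$-structure, and these two assignments are mutually inverse by the universal property of the pullback squares defining $L^\mu(M)_\R$ and $P_\R$.

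\textbf{The main obstacle.} The one point that needs genuine care, rather than bookkeeping, is verifying that for a $P(V,J)$-structure the condition ``$P_0 \to L(M)_0$ factors through $L^\mu(M)_{0,\R}$'' holds automatically — in other words, that a frame compatible with the odd pairing $J$ automatically respects the Levi-flat mixed structure on the even tangent directions. This comes down to the observation that $P(V,J)$ acts trivially (or rather tautologically) on the odd directions via the even ones, so the only mixed data to check lives over $M_0$, where $P_0 = L(M_0)$ already carries $L(M_0)_\R$ as a genuine real form; there is no complex-analytic direction to preserve because the body of $GL(V)$ that is relevant here, namely $GL(V_{\bar 0})$, is ordinary-real, not mixed. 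Once this is in place, the bijection is formal. I would present the argument by first pinning down $P(V,J)_0 \cong GL(V_{\bar 0})$ and its real form, then invoking Lemma \ref{lem: real forms of principal bundles}, and finally noting that the "$\R$" decoration is vacuous so the two notions literally coincide.
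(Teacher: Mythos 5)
Your overall strategy --- pull back along a real form of the body via Lemma \ref{lem: real forms of principal bundles} in one direction and induce up along $P(V,J)_\R\rightarrow P(V,J)$ in the other --- is the right skeleton, but the central justification is wrong. In the $cs$ setting of this paper the even model space $V_{\bar{0}}$ is a \emph{complex} vector space with a chosen real form $(V_\R)_{\bar{0}}$; it is only the odd part that satisfies $V_\C=V_{\bar{1}}$. Hence $P(V,J)_0\cong GL(V_{\bar{0}})$ is a complex Lie group whose mixed real form is the \emph{proper} subgroup $GL((V_{\bar{0}})_\R)$, and you cannot ``choose $(V_{\bar{0}})_\R=V_{\bar{0}}$''. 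Consequently your claims that $P(V,J)_{0,\R}=P(V,J)_0$, that ``the reduction $P_{0,\R}$ is just $P_0$ itself'', that ``$P_\R$ is canonically $P$'', and that ``the $\R$ decoration is vacuous so the two notions literally coincide'' are all false: a $P(V,J)_\R$-structure is a reduction of $L^\mu(M)_\R$ to a strictly smaller supergroup and is not literally the same object as a $P(V,J)$-structure. For the same reason the factorization of $P_0\rightarrow L(M)_0$ through $L^\mu(M)_{0,\R}$ is \emph{not} automatic for a $P(V,J)$-structure $P$: precisely because $P(V,J)_0$ is the full even general linear group, $P_0\cong L(M_0)$ is the full (complexified) frame bundle of the body, which does not consist of real frames. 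The reality condition bites exactly in the even directions, not in the odd ones as your ``nothing to cut down'' remark suggests.

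What actually produces the bijection --- and this is the argument the paper is pointing to in the two sentences preceding the statement --- is the isomorphism $P_0\cong L(M_0)$ itself: since the even structure group is all of $GL(V_{\bar{0}})$, the body of any $P(V,J)$-structure admits one and only one reduction to $GL((V_{\bar{0}})_\R)$ compatible with the mixed structure of $M_0$, namely the canonical real frame bundle $L(M_0)_\R\subseteq L(M_0)$. Pulling back along this canonical reduction (Lemma \ref{lem: real forms of principal bundles}) yields a $P(V,J)_\R$-structure $P_\R\subseteq P$; inducing up along $P(V,J)_\R\rightarrow P(V,J)$ recovers $P$, and conversely any $P(V,J)_\R$-structure has even body part a full $GL((V_{\bar{0}})_\R)$-subbundle of the real frames of $M_0$, hence equal to $L(M_0)_\R$, so the two assignments are mutually inverse. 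In other words, the ``$\R$'' refinement is canonically determined rather than vacuous; compare the even case $OSp(V,J)_\R$, where the existence of the body reduction is a genuine extra condition (the signature condition on the underlying metric). With this correction, the pullback/induction bookkeeping in your second paragraph goes through.
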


From Theorem \ref{thm: main}, we obtain the following result.
\begin{Thm}
Let $M$ be a supermanifold with a $P(V, J)_\R$-structure.
If $M_0$ is complete and all infinitesimal automorphisms are decomposable, then the isometry group functor $\uAut(P)$
is representable.
\end{Thm}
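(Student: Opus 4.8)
The plan is to obtain the statement as a direct application of Theorem \ref{thm: main} to the $P(V,J)_\R$-structure $P$, so the work splits into three parts: checking that $P$ is admissible and of finite type, invoking Theorem \ref{thm: main}, and matching the two hypotheses to the equivalence it provides. Finite type is immediate from the Proposition establishing $\mathfrak{p}(V,J)^{(1)} = 0$: the inductive description of $\mathfrak{g}^{(k)}$ as a subspace of $\uHom(\mathfrak{g}^{(-1)},\mathfrak{g}^{(k-1)})$ forces $\mathfrak{p}(V,J)^{(k)} = 0$ for all $k\geq 1$, hence $G_\R^{(k)} = \{1\}$ for all $k\geq 1$. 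For admissibility one must check that $(\mathfrak{g}^{(k)})_{\bar 0,\R}$ defines a mixed structure on $(\mathfrak{g}^{(k)})_{\bar 0}$ for every $k\geq 0$; for $k\geq 1$ this is vacuous since the spaces vanish, and for $k=0$ it is precisely the remark recorded just above the theorem, that the real form $GL((V_{\bar 0})_\R)\leqslant GL(V_{\bar 0})$ upgrades $P(V,J)_\R$ to a genuine mixed Lie subgroup $P(V,J)_\R\leqslant GL(V)_\R$ --- equivalently, under $\mathfrak{p}(V,J)_{\bar 0}\cong\mathfrak{gl}(V_{\bar 0})$ the subspace $\mathfrak{p}(V,J)_{\bar 0,\R}$ corresponds to $\mathfrak{gl}((V_{\bar 0})_\R)$, which is a mixed (indeed real) form.

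Granting admissibility and finite type, Theorem \ref{thm: main} applies with no further work: $i^*\uAut(P)$ is representable with real Lie algebra $\mathfrak{aut}(P)^c_{\bar 0,\R}$, the superalgebra $\mathfrak{aut}(P)$ is automatically finite-dimensional, and $\uAut(P)$ is representable if and only if $\mathfrak{aut}(P)^{c,d}_{\bar 0} = \mathfrak{aut}(P)_{\bar 0}$, i.e.\ if and only if every even infinitesimal automorphism of $P$ is simultaneously complete and decomposable. It therefore remains to see that the two hypotheses furnish exactly these properties. Decomposability of all infinitesimal automorphisms is the second hypothesis verbatim. For completeness, recall that a $P(V,J)_\R$-structure is the same datum as an odd non-degenerate supersymmetric metric on $TM$ and that $P_0\cong L(M_0)$; an even infinitesimal automorphism $X$ of $P$ has an underlying vector field $X_0$ on $M_0$, and via the embedding $\uAut(P)\hookrightarrow\uAut(P^{(k)}_\R)$ into the automorphism functor of a $\{1\}$-structure on $P^{(k-1)}_\R$ (Section \ref{subsec: The automorphisms of a 1 structure}), completeness of $X$ is equivalent to completeness of the corresponding even real vector field on $P^{(k-1)}_\R$ commuting with the vector field defining that $\{1\}$-structure, whose flow lies over the flow of $X_0$ on $M_0$. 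Completeness of $M_0$ then propagates through the tower $P^{(k-1)}_\R\to\cdots\to P\to M\to M_0$ and yields completeness of this vector field, exactly as in the general finite-type situation and in the even Riemannian analogue.

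The only substantive point is this last completeness step --- understanding why the completeness hypothesis on the body $M_0$ forces every even infinitesimal automorphism, lifted to the real prolongation where it appears as an automorphism of a $\{1\}$-structure, to be complete. Once that is in place, finite-dimensionality of $\mathfrak{aut}(P)$ and representability of $i^*\uAut(P)$ are read off directly from Theorem \ref{thm: main}, and the equivalence ``$\uAut(P)$ representable $\iff$ all even infinitesimal automorphisms complete and decomposable'' from the same theorem closes the argument; the remaining steps are the routine verification of admissibility above and an unwinding of definitions.
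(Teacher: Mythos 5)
Your proposal is correct and follows essentially the same route as the paper, whose entire proof is the one-line invocation of Theorem \ref{thm: main}: finite type and admissibility are exactly what the preceding propositions on $\mathfrak{p}(V,J)^{(1)}=0$ and the real form $P(V,J)_\R\leqslant GL(V)_\R$ supply, and the hypotheses are matched to the criterion $\mathfrak{aut}(P)_{\bar 0}^{c,d}=\mathfrak{aut}(P)_{\bar 0}$ just as you describe. Your extra remarks on propagating completeness through the prolongation tower via the flow theory of even real vector fields only spell out what the paper leaves implicit.
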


\subsection{Superization of Riemannian spin manifolds}
Let $(M_0, g_0)$ be a connected pseudo-Riemannian spin manifold
endowed with a $\Spin(V_{\bar{0}})$-structure
\[
\xymatrix{
\rho(M_0)\colon \Spin(M_0)\ar[r] & SO(M_0),
}
\]
where we set $(V_{\bar{0}}, \alpha) = (T_m M_0, g_m)$ for some $m\in M_0.$
Choose any real or complex $Cl(V_{\bar{0}}, \alpha)$- or $Cl(V_{\bar{0}}, \alpha)\otimes \C$-module $V_{\bar{1}}.$

The spinor bundle is the associated bundle $\mathcal{S} = \Spin(M_0)\times^{\Spin(V_{\bar{0}})} V_{\bar{1}}\rightarrow M_0$,
which we endow with the lift of the Levi-Civita connection.
Then $TM_0\oplus \mathcal{S}\rightarrow M_0$ admits a reduction
to $Spin(V_{\bar{0}})\leqslant GL(V_{\bar{0}})\times GL(V_{\bar{1}})$ by means of
$(\rho(M_0), \id)\colon Spin(M_0)\rightarrow SO(M_0)\times \Spin(M_0).$

The spinor supermanifold $M$ associated to this data is obtained by taking the exterior algebra of the dual $\mathcal{S}^*$:
\[
M = (M_0, \Gamma(\textstyle{-}, \bigwedge \mathcal{S}^*)).
\]
It is a real supermanifold or a supermanifold depending on whether $V_{\bar{1}}$ is chosen to be real or complex.
Any vector field on $M_0$ can be extended to $M$
by means of the dual connection on $\mathcal{S}^*$, $X\mapsto \nabla_X$,
and, furthermore, dual spinors can be contracted with spinors.
This yields an inclusion $\iota\colon TM_0\oplus \Pi\mathcal{S}\rightarrow TM$
and hence a $\Spin(V_{\bar{0}})$-structure $P_{\Spin(V_{\bar{0}})}\subseteq L(M)_\R.$

Any $\Spin(V_{\bar{0}})$-submodule $\mathcal{W}\subseteq \uHom(V_{\bar{0}}, V_{\bar{1}})$ gives rise to a mixed Lie supergroup
$Spin(V_{\bar{0}})\ltimes \mathcal{W}\leqslant GL(V)_\R.$
Consequently, by inducing up, any such $\mathcal{W}$ gives rise to a $\Spin(V_{\bar{0}})\ltimes \mathcal{W}$-structure
on $M$:
\[
 P_{Spin(V_{\bar{0}})\ltimes \mathcal{W}} := P_{Spin(V_{\bar{0}})}\times^{Spin(V_{\bar{0}})} (Spin(V_{\bar{0}})\ltimes \mathcal{W}).
\]

A particular choice is
\[
\mathcal{W} = \{f_s\colon V_{\bar{0}}\rightarrow V_{\bar{1}}\mid s\in V_{\bar{1}},\ f_s(v_0) = v_0 \cdot s\}.
\]

\begin{Prop}
For this choice of $\mathcal{W}$, $\Spin(V_{\bar{0}})\ltimes \mathcal{W}\leqslant GL(V)$
is of finite type, provided that $\dim M\geq 3.$
\end{Prop}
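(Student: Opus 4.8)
The plan is to verify that the prolongation $\mathfrak{g}^{(1)}$ of $\mathfrak{g} = \mathrm{Lie}(\Spin(V_{\bar 0})\ltimes \mathcal W)$ vanishes once $\dim M \geq 3$, which by definition of finite type (together with admissibility, which one checks is automatic here since $\mathcal W$ sits inside $\uHom(V_{\bar 0}, V_{\bar 1})$ and the relevant real subspaces are $\Spin$-stable) gives the claim. First I would identify $\mathfrak g$ explicitly: as a super vector space $\mathfrak g_{\bar 0} = \mathfrak{spin}(V_{\bar 0}) \cong \mathfrak{so}(V_{\bar 0},\alpha)$ acting diagonally on $V_{\bar 0}\oplus V_{\bar 1}$ (by the vector representation on $V_{\bar 0}$ and the spin representation on $V_{\bar 1}$), while $\mathfrak g_{\bar 1} = \mathcal W$, sitting inside $\uHom(V_{\bar 0}, V_{\bar 1}) \subseteq \mathfrak{gl}(V)_{\bar 1}$, spanned by the Clifford-multiplication maps $f_s\colon v_0\mapsto v_0\cdot s$ for $s\in V_{\bar 1}$.

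Next I would unwind the definition of the first prolongation for a super vector space: an element $T\in\mathfrak g^{(1)}$ is a linear map $T\colon V\to\mathfrak g$ (of degree zero as a map $\mathfrak g^{(-1)} = V \to \mathfrak g^{(0)} = \mathfrak g$) satisfying the graded symmetry $T(v)(w) = (-1)^{|v||w|} T(w)(v)$. I would split $T$ according to the bigrading on $V = V_{\bar 0}\oplus V_{\bar 1}$ and on $\mathfrak g = \mathfrak g_{\bar 0}\oplus\mathfrak g_{\bar 1}$, obtaining four component maps: the even part $T|_{V_{\bar 0}}\colon V_{\bar 0}\to\mathfrak g_{\bar 0} = \mathfrak{so}(V_{\bar 0})$ together with $T|_{V_{\bar 1}}\colon V_{\bar 1}\to\mathfrak g_{\bar 1} = \mathcal W$, and the odd part $T|_{V_{\bar 0}}\colon V_{\bar 0}\to\mathfrak g_{\bar 1}$ with $T|_{V_{\bar 1}}\colon V_{\bar 1}\to\mathfrak g_{\bar 0}$. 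The purely bosonic component $V_{\bar 0}\to\mathfrak{so}(V_{\bar 0})$ is exactly the classical setup: since $\mathfrak{so}(n)^{(1)} = 0$ for $n\geq 3$ (this is the standard fact that the orthogonal group has trivial prolongation, and it is where the hypothesis $\dim M \geq 3$ enters, as $\dim M_0 = \dim V_{\bar 0}$), this component vanishes. For the remaining components I would write out the graded-symmetry relations and the constraint that the values actually land in $\mathfrak g$ (rather than all of $\mathfrak{gl}(V)$) — in particular that $T(v_1)\in\mathfrak{spin}(V_{\bar 0})$ for $v_1\in V_{\bar 1}$ forces the associated endomorphism of $V_{\bar 1}$ to be the spin-lift of its action on $V_{\bar 0}$, a strong compatibility — and exploit the faithfulness and Clifford-module structure to conclude these components vanish too.

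The main obstacle I expect is the mixed odd component $T\colon V_{\bar 0}\to\mathcal W\subseteq\uHom(V_{\bar 0},V_{\bar 1})$, paired with its partner $T\colon V_{\bar 1}\to\mathfrak{so}(V_{\bar 0})$: here the graded-symmetry condition mixes the two and the analysis requires using the precise shape of $\mathcal W$ (that it consists only of the maps $f_s$, so $T(v_0) = f_{\sigma(v_0)}$ for a linear $\sigma\colon V_{\bar 0}\to V_{\bar 1}$) and the identity $v_0\cdot w_0 + w_0\cdot v_0 = -2\alpha(v_0,w_0)$ in the Clifford algebra. I would feed the symmetry relation $T(v_0)(w_0)\cdot\text{(something)} = \pm T(w_0)(v_0)$ and the condition that the $\mathfrak{so}(V_{\bar 0})$-valued partner be genuinely skew-symmetric through these Clifford relations; a short computation, using that $V_{\bar 1}$ is a faithful Clifford module and that in dimension $\geq 3$ the map $\mathfrak{so}(V_{\bar 0})\to\mathrm{End}(V_{\bar 1})$ is injective with the vector representation not appearing as a summand of $\mathrm{Sym}^2$ of the relevant pieces, should force $\sigma = 0$ and hence $T = 0$. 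Assembling the four vanishing statements gives $\mathfrak g^{(1)} = 0$, so all higher prolongations vanish and $\Spin(V_{\bar 0})\ltimes\mathcal W\leqslant GL(V)$ is of finite type.
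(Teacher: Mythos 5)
Your reduction has the right skeleton and matches what the paper implicitly does: identify $\mathfrak g$ with $\mathfrak{spin}(V_{\bar 0})\oplus\mathcal W$ acting via the vector and spin representations, split a prolongation element into its four components, kill the component $V_{\bar 0}\to\mathfrak{so}(V_{\bar 0})$ by the classical argument, and use that elements of $\mathcal W$ annihilate $V_{\bar 1}$ to dispose of the component $V_{\bar 1}\to\mathfrak{spin}(V_{\bar 0})$, so that only $\sigma\colon V_{\bar 0}\to V_{\bar 1}$ with $T(v_0)=f_{\sigma(v_0)}$ survives, subject to $w_0\cdot\sigma(v_0)=v_0\cdot\sigma(w_0)$. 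This is exactly the paper's ``everything boils down to $e_i\cdot s_j=e_j\cdot s_i$.'' However, there are two genuine problems. First, you place the hypothesis $\dim M\geq 3$ in the wrong spot: $\mathfrak{so}(n)^{(1)}=0$ holds for every $n$ (the standard symmetrization argument needs no dimension restriction), so the purely bosonic component never contributes; the dimension hypothesis is needed precisely for the surviving odd component, and indeed for $\dim V_{\bar 0}\in\{1,2\}$ the equations $e_i\cdot s_j=e_j\cdot s_i$ admit nonzero solutions, so $\mathfrak g^{(1)}\neq 0$ there. Your stated plan, with the dimension entering only through the bosonic part, would therefore prove too much.

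Second, the decisive step — that $e_i\cdot s_j=e_j\cdot s_i$ for all $i,j$ forces $s_j=0$ once there are at least three orthonormal directions — is the entire content of the proposition, and in your proposal it is only asserted (``a short computation \dots should force $\sigma=0$'') and supported by a heuristic (injectivity of $\mathfrak{so}(V_{\bar 0})\to\End(V_{\bar 1})$, a $\mathrm{Sym}^2$ decomposition) that is not a proof and, as the low-dimensional cases show, cannot succeed without explicitly using three distinct indices. The paper's computation is short and you should supply it or an equivalent: from $e_l\cdot s_k=e_k\cdot s_l$ one gets $s_k=-(e_l,e_l)\,e_le_k s_l$ in an orthonormal basis; substituting $s_l=-(e_j,e_j)\,e_je_l s_j$ for a third index $j$ with $l\neq j$, $l\neq k$ gives $s_k=-(e_j,e_j)\,e_ke_j s_j$, while directly $s_k=-(e_j,e_j)\,e_je_k s_j$; comparing the two for $k\neq j$ yields $e_je_k s_j=0$, hence $s_j=0$ because Clifford multiplication by an anisotropic vector is invertible. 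Without this argument (and with the misattributed role of $\dim M\geq 3$), the proposal does not establish the statement.
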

\begin{proof}
 After choosing an orthonormal basis $\{e_i\}$ of $V_{\bar{0}}$, everything boils down to
 \[
  e_i \cdot s_j = e_j \cdot s_i
 \]
for all $i$ and $j$ and certain $s_i\in V_{\bar{1}}$, which implies $s_j = 0$ if $\dim M\geq 3$:
We have $s_k = -(e_l, e_l) e_l e_k s_l.$
On one hand, if $k$, $l$ and $j$ are such that $l\neq j$ and $l\neq k$
we have
\begin{align*}
 s_k & = -(e_l, e_l) e_l e_k s_l\\
     & = -(e_l, e_l) e_l e_k (- (e_j, e_j) e_j e_l s_j)\\
     & = -(e_j, e_j) e_k e_j s_j
\end{align*}
On the other hand
\[
 s_k = -(e_j, e_j) e_j e_k s_j.
\]
So, if in addition $k\neq j$ (hence all three are different), then
\[
 e_j e_k s_j = 0,
\]
so that we finally arrive at $s_j = 0.$
\end{proof}

\begin{Rmk}
By a theorem of Cort\'es et al.~\cite{Cortes et al}, the vector field $\iota(s)$ associated
with a spinor 
gives rise to an infinitesimal automorphism of $P_{\Spin(V_0)\ltimes \mathcal{W}}$
if and only if $s$ is a twistor spinor, i.e.~there exists a spinor $\tilde{s}$ such that for all $X$ we have $\nabla_X s = X\cdot \tilde{s}.$
\end{Rmk}

\section{Appendix}
\label{sec: Appendix}
\subsection{Non-existence of a forgetful functor $\SMan^\mu\rightarrow \SMan$}
\label{subsec: Non-existence of a forgetful functor}

A mixed manifold $M$ has an underlying manifold $M^{sm}$
which comes with a functorial map $M^{sm}\rightarrow M.$
For an affine space $M = \A(V, V_\R, V_\C)$, the assignment is simply given by
setting $M^{sm} = \A(\C\otimes V_\R, V_\R, 0)$,
and the map $M^{sm}\rightarrow M$ is induced by the map $\C\otimes V_\R\rightarrow V.$
We show that the analogous statement fails in the category of mixed supermanifolds.
This is not surprising insofar as there does not even exist a forgetful functor from complex supermanifolds
to supermanifolds \cite{Witten}.
A by-product of the argument is a proof that there is no functorial way to split even complex functions
on supermanifolds into two even real functions (Proposition \ref{prop: non-existence of functorial lift from complex functions to two real functions}).

Let $(V, V_\R, V_\C)$ be a mixed super vector space.
The natural choice for the underlying supermanifold is given by
the affine space associated with the super vector space
$u(V, V_\R, V_\C) = (\C\otimes (V_\R)_{\bar{0}}\oplus V_{\bar{1}}, V_\R, V_{\bar{1}}).$
The natural choice for the map
\[
\xymatrix{
 \epsilon_{(V, V_\R, V_\C)}\colon \A(u(V, V_\R, V_\C))\ar[r] & \A(V, V_\R, V_\C)
}
\]
is induced by the
$\C$-linearization of the inclusion $(V_\R)_{\bar{0}}\rightarrow V_{\bar{0}}$
and the identity on $V_{\bar{1}}.$
Note that $u^2 = u.$
However, these natural choices do not assemble to a forgetful functor
from mixed supermanifolds to supermanifolds:
\label{subsec: the underlying sm of a mixed sm}
\begin{Prop}
\label{prop: non-existence of forgetful functor}
There is no functor $F\colon \SMan^\mu\rightarrow \SMan$
such that the following two conditions hold:
\begin{itemize}
\item[(a)]$F(\A(V, V_\R, V_\C)) = \A(u(V, V_\R, V_\C))$ and $F(\A(\epsilon_{(V, V_\R, V_\C)})) = \id_{\A(u(V, V_\R, V_\C))}.$
\item[(b)] $F|_{\SMan} = \id_{\SMan}.$
\end{itemize}
\end{Prop}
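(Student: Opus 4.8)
The plan is a diagonal argument. Assuming such an $F$ exists, I would produce a single morphism whose image under $F$ is forced to equal two distinct morphisms. The mechanism behind the non-uniqueness is that a nilpotent even holomorphic function on a complex supermanifold, such as $\theta_1\theta_2$ on $\C^{1|2}$, can be realized inside the structure sheaf of its naive underlying $cs$ manifold in more than one way, and no functor can make that choice coherently.

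Take $M = \A(\C^{1|2},\C^{1|2},\C^{1|2})$, the complex supermanifold with coordinates $z\mid\theta_1,\theta_2$; it lies in $\SMan^{\mu}$ since $\SMan^{\C}$ is a full subcategory there. Abbreviate $A := \A(u(\C^{1|2},\C^{1|2},\C^{1|2}))$, which is the $cs$ manifold $\R^{2|2}$ with even coordinates $x,y$ and odd coordinates $\theta_1,\theta_2$, and let $\epsilon\colon A\to M$ be the structure morphism; by construction $\epsilon$ is the identity on underlying spaces and $\epsilon^\sharp$ is determined by $\epsilon^\sharp(z) = x+iy$ and $\epsilon^\sharp(\theta_i) = \theta_i$. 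Hypothesis (a) gives $F(M) = A$ and $F(\epsilon) = \id_A$, while $F(A) = A$ by (b) (and by $u^2 = u$). Now consider the biholomorphic automorphism $g\colon M\to M$ given by $g^\sharp(z) = z+\theta_1\theta_2$, $g^\sharp(\theta_i) = \theta_i$.

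The key step is to observe that $g\circ\epsilon\colon A\to M$ factors through $\epsilon$ in at least two ways. Let $\tilde g_1,\tilde g_2\colon A\to A$ be the $cs$-automorphisms (identity on underlying spaces, fixing $\theta_1,\theta_2$) with
\[
\tilde g_1^\sharp\colon x\mapsto x+\theta_1\theta_2,\ \ y\mapsto y, \qquad\qquad \tilde g_2^\sharp\colon x\mapsto x,\ \ y\mapsto y-i\theta_1\theta_2.
\]
Unwinding the composition of morphisms of $\C$-ringed superspaces, one checks $\tilde g_j^\sharp(\epsilon^\sharp(z)) = \tilde g_j^\sharp(x+iy) = x+iy+\theta_1\theta_2 = \epsilon^\sharp(g^\sharp(z))$ and $\tilde g_j^\sharp(\epsilon^\sharp(\theta_i)) = \theta_i = \epsilon^\sharp(g^\sharp(\theta_i))$ for $j = 1,2$, so $\epsilon\circ\tilde g_1 = g\circ\epsilon = \epsilon\circ\tilde g_2$. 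Since $\tilde g_1$ and $\tilde g_2$ are morphisms of $\SMan$, condition (b) gives $F(\tilde g_j) = \tilde g_j$, and therefore
\[
F(g) = F(g)\circ F(\epsilon) = F(g\circ\epsilon) = F(\epsilon)\circ F(\tilde g_j) = \tilde g_j\qquad(j = 1,2).
\]
Hence $\tilde g_1 = \tilde g_2$, contradicting $\tilde g_1^\sharp(x) = x+\theta_1\theta_2\neq x = \tilde g_2^\sharp(x)$.

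I expect the only real work to be locating the witness $(M,g,\epsilon,\tilde g_1,\tilde g_2)$; once it is written down, the verifications are a routine unwinding of the definition of $\epsilon$ and of composition in $\SMan^{\mu}$. The same example is also the content of Proposition \ref{prop: non-existence of functorial lift from complex functions to two real functions}: the two lifts $\tilde g_1,\tilde g_2$ exhibit two incompatible ways of attributing the nilpotent summand $\theta_1\theta_2$ of the even function $\epsilon^\sharp(g^\sharp(z)) = x+iy+\theta_1\theta_2$ to its ``real'' or ``imaginary'' part, so there is no functorial splitting of even complex functions on supermanifolds into two even real functions.
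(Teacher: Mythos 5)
Your proposal is correct, and it reaches the contradiction by a genuinely different route than the paper. The paper exploits the same underlying phenomenon (the ambiguity in splitting the nilpotent part of an even complex function into real and imaginary parts), but it does so by mapping the $cs$ manifold $\A(\R^{2}\times\C^{0|2})$ into the purely even complex manifold $\A(\C)$: from (a) it deduces that $F$ must carry the additive monoid $\A(\C)$ to $\A(\R^{2})$, and then the automorphisms $f_\alpha$, parametrized by an arbitrary smooth $\alpha\colon\R^{2}\to\C$ and fixed by $F$ via (b), together with $\varphi_z\circ f_\alpha=\varphi_z+\varphi_{\vartheta_1\vartheta_2}$, force $F(\varphi_{\vartheta_1\vartheta_2})$ to take a continuum of distinct values; this needs the (implicit) compatibility of $F$ with the monoid structure and with pairings into $\A(\C)\times\A(\C)$, which is extractable from (a) but costs extra steps. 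Your argument dispenses with all of that: you keep a single holomorphic automorphism $g$ of $\C^{1|2}$ and produce two distinct $\SMan$-morphisms $\tilde g_1,\tilde g_2$ with $\epsilon\circ\tilde g_1=g\circ\epsilon=\epsilon\circ\tilde g_2$, so that bare functoriality plus (a) and (b) gives $F(g)=\tilde g_1=\tilde g_2$, a contradiction. This is shorter and isolates the obstruction as non-uniqueness of lifts along the counit $\epsilon$; the only points that need checking do check out, namely that $\tilde g_2^{\sharp}(y)=y-i\theta_1\theta_2$ is a legitimate pullback of an even coordinate in $\SMan$ (only its reduction must be real, by Lemma \ref{lem: reality condition}), and that $\epsilon$ is indeed the map prescribed in (a) in the extreme case $V_\C=V$. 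One caveat on your closing remark: the two lifts by themselves do not prove Proposition \ref{prop: non-existence of functorial lift from complex functions to two real functions}, which asserts the non-existence of a \emph{natural} section of $\epsilon_\C$; mere non-uniqueness of lifts of one map is not yet a contradiction there, and the paper's $\alpha$-family (naturality against the $f_\alpha$) is what that second proposition actually uses.
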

\begin{proof}
Assume that such a functor $F$ existed.
Consider $\A(\C)$ and $\A(\R^2)$
with their standard monoid structure.
Then we had a commutative square
\[
 \xymatrix{
\A(\R^2)\times \A(\R^{2})\ar[r]^-{\mu_{\R^2}}\ar[d]^{\epsilon_{\C\times \C}} & \A(\R^2)\ar[d]^{\epsilon_{\C}}\\
\A(\C)\times \A(\C) \ar[r]^-{\mu_{\C}} &\A(\C)
}
\]
and it would follow from the second assumption that $F$ would take the monoid $\A(\C)$ to the monoid $\A(\R^2).$

Consider the supermanifold $M = \A(\R^{2}\times \C^{0|2})$
with coordinates $(x, y, \vartheta_1, \vartheta_2)$
and
consider the two maps
$\varphi_{z}, \varphi_{\vartheta_1\vartheta_2}\colon M\rightarrow \A(\C)$
given by $\varphi_z^\sharp (z) = x + iy$
and
$\varphi_{\vartheta_1\vartheta_2}^{\sharp}(z) = \vartheta_1\vartheta_2$,
respectively.
Then we have $\varphi_z = \epsilon_{\C}\circ (x, y)$, so that we would obtain
$F(\varphi_z) = F((x, y)) = (x, y).$

For an arbitrary smooth function $\alpha\colon \R^2\rightarrow \C$ we now define
$f_\alpha\colon M\rightarrow M$ by
\begin{align*}
f_\alpha^\sharp(x) &= x + \alpha \vartheta_1\vartheta_2,\\
f_\alpha^\sharp(y) &= y + (-i)(1-\alpha) \vartheta_1\vartheta_2,\\
f_\alpha^\sharp(\vartheta_i) &= \vartheta_i.
\end{align*}
Then $\varphi_z \circ f_\alpha = \varphi_z + \varphi_{\vartheta_1\vartheta_2}.$
However, on one hand
\begin{align*}
F(\varphi_z \circ f_\alpha)^\sharp &= F(f_\alpha)^{\sharp}\circ F(\varphi_z)^\sharp\\
                                   &= f_\alpha^\sharp\circ F(\varphi_z)^\sharp\\
                                   &= f_\alpha^\sharp\circ (x, y)\\
                                   &= (x, y) + (\alpha\varphi_{\vartheta_1\vartheta_2}, (-i)(1-\alpha)\varphi_{\vartheta_1\vartheta_2})
\end{align*}
and on the other hand
\begin{align*}
F(\varphi_z + \varphi_{\vartheta_1\vartheta_2}) &=  F(\varphi_z) + F(\varphi_{\vartheta_1\vartheta_2})\\
                                                & = (x, y) + F(\varphi_{\vartheta_1\vartheta_2}).
\end{align*}
This would imply
$F(\varphi_{\vartheta_1\vartheta_2}) = (\alpha\varphi_{\vartheta_1\vartheta_2}, (-i)(1-\alpha)\varphi_{\vartheta_1\vartheta_2})$
for arbitrary $\alpha\colon \R^2\rightarrow \C$,
which is absurd.
\end{proof}

Similarly, one proves the following related
proposition.

\begin{Prop}
\label{prop: non-existence of functorial lift from complex functions to two real functions}
The natural transformation $\epsilon_\C\colon \A(\R^2)\rightarrow \A(\C)$ between functors on $\SMan$
admits no section.
\end{Prop}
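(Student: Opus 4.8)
The plan is to turn a hypothetical section of $\epsilon_\C$ into a functorial recipe for writing an even complex function as a sum $u+iv$ of two even \emph{real} functions, and then to obstruct such a recipe on a single supermanifold carrying a suitable family of automorphisms, exactly in the spirit of the proof of Proposition \ref{prop: non-existence of forgetful functor}.

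First I would identify the functors in play. By Lemma \ref{lem: reality condition}, for $T\in\SMan$ one has $\A(\C)(T)=\Gamma(\mathcal{O}_T)_{\bar 0}$ (part (a)) and $\A(\R^2)(T)=\Gamma(\mathcal{O}_{T,\R,\bar 0})^2$ (part (c)), and on these $T$-points $\epsilon_\C$ is the map $(u,v)\mapsto u+iv$. Hence a section $s$ of $\epsilon_\C$ assigns to every even function $\varphi$ on every $T$ a pair $s_T(\varphi)=(u,v)$ of even real functions with $u+iv=\varphi$, naturally in $T$: for a morphism $g\colon T'\to T$ one has $s_{T'}(g^\sharp\varphi)=(g^\sharp u,g^\sharp v)$. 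Note also that, by the pullback defining $\mathcal{O}_{M,\R}$, the reduction of a real function is real valued.

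Next I would evaluate on $T=\A(\R^2\times\C^{0|2})$, with even coordinates $x,y$ and odd coordinates $\vartheta_1,\vartheta_2$, and on the element $\varphi_z:=x+iy\in\A(\C)(T)$. For a smooth function $\gamma\colon\R^2\to\C$ let $h_\gamma$ be the automorphism of $T$ with $h_\gamma^\sharp(x)=x+\gamma\,\vartheta_1\vartheta_2$, $h_\gamma^\sharp(y)=y+i\gamma\,\vartheta_1\vartheta_2$ and $h_\gamma^\sharp(\vartheta_i)=\vartheta_i$; it is invertible with inverse $h_{-\gamma}$ because the correction terms are killed upon multiplication by $\vartheta_1\vartheta_2$. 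One checks at once that $h_\gamma^\sharp(\varphi_z)=x+\gamma\,\vartheta_1\vartheta_2+i(y+i\gamma\,\vartheta_1\vartheta_2)=x+iy=\varphi_z$, independently of $\gamma$. On the other hand, write $s_T(\varphi_z)=(u,v)$; reducing the identity $u+iv=x+iy$ (with $u,v$ having real-valued reductions) gives $u_0=x$, so $u=x+\delta\,\vartheta_1\vartheta_2$ for some $\delta\in C^\infty(\R^2,\C)$, whence $h_\gamma^\sharp(u)=x+(\gamma+\delta)\vartheta_1\vartheta_2$ (again the corrections inside $h_\gamma^\sharp(\delta)$ disappear against $\vartheta_1\vartheta_2$). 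But naturality of $s$ along $h_\gamma$ together with $h_\gamma^\sharp(\varphi_z)=\varphi_z$ forces $h_\gamma^\sharp(u)=u$, i.e.\ $\gamma\,\vartheta_1\vartheta_2=0$ in $\mathcal{O}_T$, i.e.\ $\gamma\equiv 0$; taking $\gamma\equiv 1$ is the desired contradiction.

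The only point requiring care is the construction of the compensating family $h_\gamma$: one needs two even coordinates so that the complex combination $x+iy$ is insensitive to the deformation while the underlying real coordinates are not, together with at least two odd coordinates so that $\vartheta_1\vartheta_2$ is a nonzero even nilpotent available to carry it. The remaining verifications---that each $h_\gamma$ is a well-defined automorphism and that nilpotent corrections drop out whenever they multiply $\vartheta_1\vartheta_2$---are routine.
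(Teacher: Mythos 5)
Your proof is correct and takes essentially the same route as the paper: the same test object $\A(\R^{2}\times \C^{0|2})$, the same element $\varphi_z = x+iy$, and naturality against automorphisms that shift the real coordinates by nilpotent terms invisible to the complex combination $x+iy$. Your family $h_\gamma$ differs from the paper's $f_\alpha$ only by composition with a fixed shift (the $f_\alpha$ all send $\varphi_z$ to $\varphi_z+\varphi_{\vartheta_1\vartheta_2}$, while your $h_\gamma$ fix $\varphi_z$ outright), so the contradiction is obtained by the identical mechanism.
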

\begin{proof}
Assume that such a natural transformation $F$ existed.
We use the notation from the previous proof.
We consider again $M = \A(\R^{2}\times \C^{0|2})$
and the two maps
$\varphi_{z}$ and $\varphi_{\vartheta_1\vartheta_2}.$
Then $F(\varphi_z) = (x + n, y + in)$ for a nilpotent function $n$ on $M.$
Defining $f_\alpha$ as previously, we have $\varphi_z \circ f_\alpha = \varphi_z + \varphi_{\vartheta_1\vartheta_2}$,
and so $F(\varphi_z\circ f_\alpha)$ would be independent of $\alpha.$
However, we would have
\begin{align*}
F(\varphi_z\circ f_\alpha) & = f_\alpha^\sharp (x + n, y + in)\\
                           & = (x + \alpha\vartheta_1\vartheta_2 + n, y + (-i)(1-\alpha) \vartheta_1\vartheta_2 + in),
\end{align*}
a contradiction.
\end{proof}

\subsection{Flows of even real vector fields on mixed supermanifolds}
\label{subsec: flows of even real vector fields}
We outline the construction of flows of vector fields on mixed supermanifolds.
In this setting, only even real vector fields can be integrated.
We show that they have a unique maximal flow.

Let $M$ be a mixed supermanifold and let $X$ be an even real vector field.
Let $\mathcal{V}\subseteq \R^1\times M$ be open such that $\{0\}\times M\subseteq \mathcal{V}.$
A morphism
\[
\xymatrix{
\Theta^X\colon \R^{1}\times M\supseteq\mathcal{V}\ar[r] & M
}
\]
is called a \emph{flow of $X$} if
\begin{itemize}
 \item[(a)] $\partial_t \circ {\Theta^X}^\sharp = {\Theta^X}^\sharp \circ X$, and
 \item[(b)] $\Theta^X|_{\{0\}\times M} = \id_M.$
\end{itemize}

Following \cite{GW}, an open subspace $\{0\}\times M\subseteq \mathcal{V}\subseteq \R^1\times M$
such that, for all $m\in M_0$, $\mathcal{V}\cap (\R^1\times \{m\})$ is an interval
and a flow exists on $\mathcal{V}$ is called a \emph{flow domain}.

First we show that a real vector field
on a mixed manifold has a unique maximal flow.
Let $M$ be a mixed manifold and $M^{sm}$ its underlying smooth manifold
which comes with a map $i\colon M^{sm}\rightarrow M.$
Then $(i^*\mathcal{T}_M), \overline{(i^*{\mathcal{T}}_M)}\subseteq \C\otimes \mathcal{T}_{M^{sm}}$
and we have the following exact sequence:
\begin{gather}
\label{equ: exact sequence mixed}
\xymatrix{
 0\ar[r] &i^*(\mathcal{T}_{M, \C}\oplus \bar{\mathcal{T}}_{M, \C})\ar[r] & \C\otimes \mathcal{T}_{M^{sm}}\ar[r] & i^* \mathcal{T}_M/\mathcal{T}_{M, \C}\ar[r] & 0.
}
\end{gather}
In fact, locally
in a neighborhood of the form $(\C^{n_1+n_2}, \R^{n_1}\times \C^{n_2}, \C^{n_2})$,
$i^*\mathcal{T}_{M, \C}$ and $\overline{(i^*\mathcal{T}_{M, \C})}$ are spanned as
$\mathcal{O}_{M^{sm}}$-modules
by $\partial_{z_{i}}$ and $\bar{\partial}_{z_i}$ ($i \in \{n_1 + 1, \ldots, n_1 + n_2\}$),
respectively.

Then we have the following observation.
\begin{Lemma}
For any real vector field $X$ on $M$, there is a unique real vector field $Y$ on $M^{sm}$
such that $(\C\otimes Y)|_{\mathcal{O}_M} = X.$
\end{Lemma}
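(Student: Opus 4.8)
The plan is to reduce the statement to a coordinate computation and then use the uniqueness half to glue. Around a given point choose a chart realizing $M$ as $(\C^{n_1+n_2},\R^{n_1}\times\C^{n_2},\C^{n_2})$, with coordinates $x_1,\dots,x_{n_1}$ along the real directions and $z_j$ ($n_1<j\le n_1+n_2$) along the complex ones; then $\mathcal{O}_M$ consists of the smooth functions holomorphic in $z$, while $\mathcal{O}_{M,\R}$ consists of those that are in addition real-valued and hence, being real and holomorphic in $z$, independent of $z$. The underlying $M^{sm}$ is the ordinary manifold with coordinates $x_i$, $u_j=\mathrm{Re}\,z_j$, $v_j=\mathrm{Im}\,z_j$ and $\mathcal{O}_{M^{sm}}$ the full sheaf of smooth functions. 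Write $X=\sum_i a_i\partial_{x_i}+\sum_j b_j\partial_{z_j}$. Evaluating on the coordinate functions, the hypothesis that $X$ preserves $\mathcal{O}_M$ forces $a_i=X(x_i)$ and $b_j=X(z_j)$ to be holomorphic in $z$, and the stronger hypothesis that $X$ preserves $\mathcal{O}_{M,\R}$ forces the $a_i$ to be real-valued; only this reality of the $a_i$ will actually be needed.

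Next I would set, on this chart, $Y:=\sum_i a_i\partial_{x_i}+\sum_j\big(b_j\partial_{z_j}+\overline{b_j}\,\bar\partial_{z_j}\big)$. Since $b_j\partial_{z_j}+\overline{b_j}\,\bar\partial_{z_j}=\mathrm{Re}(b_j)\,\partial_{u_j}+\mathrm{Im}(b_j)\,\partial_{v_j}$ and the $a_i$ are real, $Y$ is a genuine real vector field on $M^{sm}$; and for any $\psi\in\mathcal{O}_M$ one has $\bar\partial_{z_j}\psi=0$, so $(\C\otimes Y)\psi=\sum_i a_i\partial_{x_i}\psi+\sum_j b_j\partial_{z_j}\psi=X\psi$. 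This gives local existence.

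For uniqueness, if $Y,Y'$ are real vector fields on $M^{sm}$ both restricting to $X$ on $\mathcal{O}_M$, then $D:=Y-Y'$ is real and $\C\otimes D$ kills $\mathcal{O}_M$; evaluating on $x_i,z_j\in\mathcal{O}_M$ shows that in coordinates $\C\otimes D$ lies in the $\mathcal{O}_{M^{sm}}$-span of the $\bar\partial_{z_j}$, i.e.\ in $\overline{i^*\mathcal{T}_{M,\C}}$, the kernel of $\C\otimes\mathcal{T}_{M^{sm}}\to i^*\mathcal{T}_M$ from the exact sequence (\ref{equ: exact sequence mixed}). But reality of $D$ means $\overline{\C\otimes D}=\C\otimes D$, while $\overline{\bar\partial_{z_j}}=\partial_{z_j}$, so $\C\otimes D$ would also lie in the span of the $\partial_{z_j}$; these two spans meet only in $0$, whence $D=0$. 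By this uniqueness the locally defined $Y$'s agree on overlaps, so they glue to the asserted global real vector field on $M^{sm}$, which is then the unique one.

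The argument is essentially routine; the only point deserving attention is keeping separate the two conditions packaged in ``$X$ is a real vector field on $M$'' — preservation of $\mathcal{O}_M$ versus of $\mathcal{O}_{M,\R}$ — and observing that it is precisely the latter that makes the candidate $Y$ have real coefficients, so that reality of $X$ on $M$ is exactly what is needed to produce a real $Y$ on $M^{sm}$. One could also argue invariantly: $\C\otimes\mathcal{T}_{M^{sm}}\to i^*\mathcal{T}_M$ is surjective with kernel $\overline{i^*\mathcal{T}_{M,\C}}$, complex conjugation on $\C\otimes\mathcal{T}_{M^{sm}}$ interchanges $i^*\mathcal{T}_{M,\C}$ and $\overline{i^*\mathcal{T}_{M,\C}}$, and the image of a section of $\mathcal{T}_{M,\R}$ then admits a unique conjugation-invariant lift; but the coordinate version is shorter.
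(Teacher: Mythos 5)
Your argument is correct and is essentially the paper's: the heart in both cases is the same local coordinate computation, namely that $X=\sum_i a_i\partial_{x_i}+\sum_j b_j\partial_{z_j}$ with the $a_i$ real (and the $b_j$ partially holomorphic), which forces $Y=\sum_i a_i\partial_{x_i}+\sum_j\big(b_j\partial_{z_j}+\overline{b_j}\,\bar{\partial}_{z_j}\big)$ and hence gives both existence and uniqueness locally. The only difference is cosmetic and concerns globalization: the paper obtains existence by choosing a splitting of the exact sequence \eqref{equ: exact sequence mixed} and setting $Y=X_\R+X_\C+\overline{X}_\C$, whereas you glue the local solutions using uniqueness --- both routes are fine.
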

\begin{proof}
Consider two such real vector fields $Y_1$ and $Y_2$ on $M^{sm}.$
Locally on the model space defined by $(\C^{n_1 + n_2}, \R^{n_1}\times \C^{n_2}, \C^{n_2})$,
with coordinates $\{x = (x_1, \ldots x_{n_1}), z = (z_{1}, \ldots, z_{n_2})\}$,
we have
\[
 X = \sum_i f_i(x) \partial_{x_i} + \sum_j g_j(x, z) \partial_{z_j}
\]
for smooth real functions $f_i(x)$ and partially holomorphic functions $g_j(x, z).$
Hence we have
\[
 Y_l = \sum_i f_i(x) \partial_{x_i} + \sum_j g_j(x, z) \partial_{z_j} + \sum_j \bar{g}_j(x, z) \bar{\partial}_{z_j}\quad (l\in \{1, 2\}),
\]
which proves uniqueness.
In order to prove existence, we choose a splitting of (\ref{equ: exact sequence mixed})
in order to write $i^*X = X_\R + X_\C$, where $X_\C\in i^*\mathcal{T}_{M, \C}.$
Then $Y = X_\R + X_\C + \bar{X}_\C$ is the desired vector field.
\end{proof}

\begin{Lemma}
Let $(V, V_\R, V_\C)$ be a mixed vector space and let
$X$ be a real vector field on $U\subseteq \A(V_\R)$
and $Y$ the unique real vector field such that $(\C\otimes Y)|_{\mathcal{O}_U} = X.$
The maximal flow $\Theta^Y\colon \mathcal{V}^{sm}\rightarrow U^{sm}$ of $Y$
defines a morphism of mixed manifolds $\Theta^X$
which is the unique maximal flow of $X.$
\end{Lemma}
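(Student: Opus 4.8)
The plan is to read everything in a coordinate chart and to exploit that passing to the underlying smooth manifold only reinstates the antiholomorphic directions $\bar\partial_{z_j}$, which the flow of $Y$ leaves alone. Since both assertions are local on $U$, I would fix a chart with coordinates $\{x=(x_1,\dots,x_{n_1}),\,z=(z_1,\dots,z_{n_2})\}$ modelled on $(\C^{n_1+n_2},\R^{n_1}\times\C^{n_2},\C^{n_2})$ and, as in the proof of the preceding lemma, write $X=\sum_i f_i(x)\partial_{x_i}+\sum_j g_j(x,z)\partial_{z_j}$ with $f_i$ real-valued and smooth and $g_j$ smooth in $x$ and holomorphic in $z$, so that $Y=\sum_i f_i(x)\partial_{x_i}+\sum_j\bigl(g_j\partial_{z_j}+\bar g_j\bar\partial_{z_j}\bigr)$.

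First I would show that $\Theta^Y$ underlies a morphism of mixed manifolds. A short computation gives $[\bar\partial_{z_k},Y]=\sum_j(\bar\partial_{z_k}\bar g_j)\,\bar\partial_{z_j}$, since $\bar\partial_{z_k}f_i=0$ and $\bar\partial_{z_k}g_j=0$; thus $Y$ normalises the distribution $\overline{i^*\mathcal{T}_{U,\C}}$ spanned by the $\bar\partial_{z_j}$, whose annihilator cuts out exactly the partially holomorphic functions. By the standard fact that a flow preserves any distribution its generator normalises, $d\Theta^Y$ carries $\overline{i^*\mathcal{T}_{U,\C}}$ into itself, so for $h\in\mathcal{O}_U$ the function $h\circ\Theta^Y$ is again annihilated by every $\bar\partial_{z_j}$. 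Hence $(\Theta^Y)^\sharp(\mathcal{O}_U)\subseteq\mathcal{O}_{\mathcal{V}}$, where $\mathcal{V}\subseteq\R\times U$ is the open subspace of the mixed manifold $\R\times U$ whose underlying topological space is $\mathcal{V}^{sm}$; this is precisely the datum of a morphism of mixed manifolds $\Theta^X\colon\mathcal{V}\to U$. (Concretely: the $x$-component of $\Theta^Y$ solves the decoupled system $\dot\Phi=f(\Phi)$, $\Phi_0=x$, hence is real and independent of $z$, and the $z$-component then solves $\dot\Psi=g(\Phi,\Psi)$, $\Psi_0=z$, which is holomorphic in $z$ by holomorphic dependence of solutions of an ODE on their initial data.)

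Granting this, the flow identities for $\Theta^X$ are formal. The operator $\partial_t$ preserves $\mathcal{O}_{\mathcal{V}}$ and restricts the coordinate vector field on $\R\times U$, and $(\C\otimes Y)|_{\mathcal{O}_U}=X$ by the choice of $Y$; restricting the identities $\partial_t\circ(\Theta^Y)^\sharp=(\Theta^Y)^\sharp\circ(\C\otimes Y)$ and $\Theta^Y|_{\{0\}\times U^{sm}}=\id$ along the inclusion $\mathcal{O}_U\hookrightarrow\mathcal{O}_{U^{sm}}$ then gives conditions (a) and (b) for $\Theta^X$ and $X$.

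It remains to treat maximality and uniqueness. Let $\Xi\colon\mathcal{W}\to U$ be any flow of $X$ on a flow domain $\mathcal{W}\subseteq\R\times U$. Restricting $\Xi$ to a line $\R\times\{p\}$ and reading it in coordinates, the underlying curve $t\mapsto\Xi_0(t,p)$ has velocity $\sum_i f_i\,\partial_{x_i}+\sum_j\bigl((\mathrm{Re}\,g_j)\partial_{u_j}+(\mathrm{Im}\,g_j)\partial_{v_j}\bigr)=Y$ at each of its points, hence is the integral curve of $Y$ through $p$; by maximality of $\Theta^Y$ its domain $\mathcal{W}\cap(\R\times\{p\})$ lies in $\mathcal{V}^{sm}\cap(\R\times\{p\})$. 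As $p$ was arbitrary, $\mathcal{W}\subseteq\mathcal{V}$, and since the structure sheaves carry no nilpotents both $\Xi^\sharp$ and $(\Theta^X)^\sharp$ restricted to $\mathcal{W}$ are precomposition with the common underlying map, so $\Xi=\Theta^X|_{\mathcal{W}}$. Therefore $\mathcal{V}$ contains every flow domain of $X$ and $\Theta^X$ restricts to every flow of $X$, so $\Theta^X$ is the unique maximal flow. The only step that is not pure bookkeeping, and the one I expect to be the crux, is the preservation of the partially holomorphic structure by $\Theta^Y$ in the second paragraph; the rest reduces to the decoupling of the real directions and to restricting sheaf identities along $\mathcal{O}_U\hookrightarrow\mathcal{O}_{U^{sm}}$.
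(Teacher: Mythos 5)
Your proof is correct, but it replaces the paper's key step by a self-contained argument. The paper disposes of the crucial point -- that $\Theta^Y$ pulls back partially holomorphic functions to partially holomorphic functions -- by invoking the proof of a theorem of Baouendi--Ebenfelt--Rothschild to get the statement locally in time and space, and then globalizes over the maximal domain $\mathcal{V}^{sm}$ using additivity of the flow; uniqueness and maximality are then dismissed in one line via the corresponding properties of $\Theta^Y$. You instead prove the crucial point directly: from the local normal form $X=\sum_i f_i(x)\partial_{x_i}+\sum_j g_j(x,z)\partial_{z_j}$ you check $[\bar\partial_{z_k},Y]=\sum_j(\bar\partial_{z_k}\bar g_j)\bar\partial_{z_j}$, so $Y$ normalises the antiholomorphic distribution and its flow preserves it on the whole of $\mathcal{V}^{sm}$, with no separate globalization step needed; your parenthetical decoupled-ODE argument (real $x$-system plus holomorphic dependence of the $z$-system on initial data) is essentially the content of the cited BER proof, so you have in effect two independent verifications of the step the paper outsources. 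Two small remarks: the ``standard fact'' about flows preserving normalised distributions is usually stated for real subbundles of $TM$, so you should say explicitly that the identical linear-ODE argument applies to the complex subbundle of $\C\otimes TU^{sm}$ spanned by the $\bar\partial_{z_j}$ (and that it patches across charts since the distribution is globally defined); and your maximality/uniqueness paragraph, which reduces any flow of $X$ to integral curves of $Y$ via the underlying map and the absence of nilpotents, is a more detailed rendering of what the paper asserts in one sentence -- both buy the same conclusion, yours being more explicit, the paper's more economical.
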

\begin{proof}
The proof of \cite[Theorem 12.4.2]{BER} applies to show that
for every $p\in U$ there is an open neighbourhood
$U'$ of $p$ and an $\epsilon > 0 $ such that $(-\epsilon, \epsilon)\times U'\subseteq \mathcal{V}^{sm}$
and
$\Theta^Y|_{(-\epsilon, \epsilon)\times U'}$ is a mixed morphism.
Since $\mathcal{V}^{sm}$ is a flow domain
and the since flow is additive
we conclude that $\Theta^Y$ defines a mixed morphism.
This is a flow morphism since $\Theta^Y$ is a flow for $Y$
and $(\C\otimes Y)|_{\mathcal{O}_U} = X.$
Uniqueness follows from uniqueness of the flow of $Y$ and
maximality is ensured by maximality of $\mathcal{V}^{sm}.$
\end{proof}

\begin{Lemma}
Let $(V, V_\R, V_\C)$ be a mixed super vector space
and let $X$ be a real even vector field on the open subspace
$U\subseteq \A(V_\R).$
Furthermore, let $\tilde{X}$ be the underlying real vector field on $\A((V_{\bar{0}})_\R)$
with maximal flow $\Theta^{\tilde{X}}\colon \mathcal{V}_0\rightarrow U_0.$
There is a unique flow morphism $\Theta^X\colon \mathcal{V}\rightarrow U$
where $\mathcal{V}_{0}$ is the maximal flow domain and $(\Theta^{X})_{0}$ is
the maximal flow of $\tilde{X}.$
\end{Lemma}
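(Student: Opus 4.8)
The plan is to solve the flow equation in the given chart order by order in the odd coordinates, using the maximal flow $\Theta^{\tilde X}$ of the previous lemma as the zeroth-order input and exploiting that the higher-order equations are \emph{linear} along the flow lines of $\tilde X$, hence solvable on all of the maximal flow domain $\mathcal{V}_0$.

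First I fix coordinates, identifying $U$ with a chart with even coordinates $x = (x_1,\dots,x_{n_1})$ (real) and $z = (z_1,\dots,z_{n_2})$ (partially holomorphic) and odd coordinates $\theta_1,\dots,\theta_d$, so that $\mathcal{O}_U \cong \bigoplus_{I\subseteq\{1,\dots,d\}}\mathcal{O}_{U_0}\cdot\theta^I$. Being even and real, $X = \sum_i f_i\,\partial_{x_i} + \sum_j g_j\,\partial_{z_j} + \sum_a h_a\,\partial_{\theta_a}$ with $f_i$ real (its body a real function of $x$ alone), $g_j$ partially holomorphic, and $h_a$ odd; its body vector field $\tilde X$ is then a real vector field on the mixed manifold $U_0$, so the previous lemma supplies its unique maximal flow $\Theta^{\tilde X}\colon \mathcal{V}_0\to U_0$. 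Writing $\mathcal{V}\subseteq\R^1\times U$ for the open sub-supermanifold with body $\mathcal{V}_0$, a flow morphism $\Theta^X\colon \mathcal{V}\to U$ lying over $\Theta^{\tilde X}$ is, in the chart, the same datum as its pullbacks $\xi_i := (\Theta^X)^\sharp(x_i)$, $\zeta_j := (\Theta^X)^\sharp(z_j)$, $\tau_a := (\Theta^X)^\sharp(\theta_a)$, subject to
\[
\partial_t\xi_i = (\Theta^X)^\sharp(f_i),\qquad \partial_t\zeta_j = (\Theta^X)^\sharp(g_j),\qquad \partial_t\tau_a = (\Theta^X)^\sharp(h_a),
\]
with $\xi_i,\zeta_j,\tau_a$ reducing at $t=0$ to $x_i,z_j,\theta_a$ respectively.

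The key observation is that, expanding everything in the finitely many odd generators and Taylor-expanding the compositions on the right-hand sides in the nilpotent directions, the coefficient of $\theta^I$ in $(\Theta^X)^\sharp(f_i)$ (and likewise for $g_j$, $h_a$), with $|I| = k$, is a polynomial in the $\theta^J$-coefficients of $\xi,\zeta,\tau$ for $|J|\le k$, with coefficients that are smooth functions of the bodies $(\xi^{\emptyset},\zeta^{\emptyset})$ and which depends affinely on the coefficients with $|J| = k$. Hence the system is triangular. At degree $0$ the equations for $(\xi^{\emptyset},\zeta^{\emptyset})$ are precisely the flow equations of $\tilde X$, so $(\xi^{\emptyset},\zeta^{\emptyset}) = (\Theta^{\tilde X})^\sharp(x,z)$, defined on all of $\mathcal{V}_0$. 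Inductively, once the coefficients of degree $< k$ are known, those of degree $k$ satisfy a linear inhomogeneous ODE along the flow lines of $\tilde X$ with prescribed value at $t = 0$; since $\mathcal{V}_0$ is a flow domain, i.e.\ meets each line $\R^1\times\{m\}$ in an interval, this ODE has a unique solution over all of $\mathcal{V}_0$. This constructs $\Theta^X$ on $\mathcal{V}$, and the inductive procedure makes existence, and uniqueness given the prescribed body, manifest; maximality is automatic, since the body of any flow of $X$ is a flow of $\tilde X$ and thus cannot extend past $\mathcal{V}_0$.

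It remains to check that $\Theta^X$ is a morphism of \emph{mixed} supermanifolds — this is precisely where the hypothesis that $X$ is \emph{real} enters. That $\xi_i,\zeta_j$ remain partially holomorphic follows as in the preceding mixed-manifold lemma, using that $f_i$ and $g_j$ are partially holomorphic (equivalently, differentiating the ODEs by an antiholomorphic derivative gives a linear homogeneous system with zero initial value); the body of $\xi_i$ is the real function $(\Theta^{\tilde X})^\sharp(x_i)$, so $\xi_i$ lies in the real subsheaf $\mathcal{O}_{\mathcal{V},\R}$; and $\tau_a$ stays odd because $h_a$ is. Thus $\Theta^X$ is a well-defined morphism in $\SMan^\mu$. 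The one genuine obstacle is the bookkeeping behind the triangular structure, together with the point — modeled on the proof of the preceding mixed-manifold lemma — that the linear higher-order equations are solvable on the full maximal flow domain; everything else is routine. (Equivalently, one may pass to the underlying real supermanifold of the chart, lift $X$ to a unique even real vector field $Y$ with $(\C\otimes Y)|_{\mathcal{O}_U} = X$, integrate $Y$ by the standard flow theory on real supermanifolds, and verify that $\Theta^Y$ descends; the computation above is just this argument spelled out.)
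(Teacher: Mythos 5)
Your proposal is correct and follows essentially the same route as the paper, which invokes the construction of \cite[Lemma 2.1]{GW}: the higher-order terms in the odd expansion satisfy linear ODEs along the flow of $\tilde{X}$ and hence exist on all of $\mathcal{V}_0$, while partial holomorphy (and the reality of the pullbacks of the real coordinates) follows automatically from the partially holomorphic initial condition. Your write-up merely makes explicit the triangular bookkeeping and the $\bar\partial$-argument that the paper leaves to the cited reference.
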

\begin{proof}
Following the proof given in \cite[Lemma 2.1]{GW},
the higher order terms of the flow $\Theta^{X}$ are constructed
by solving linear ordinary differential equations.
The unique solutions will automatically be partially holomorphic,
since the initial condition, the identity, is partially holomorphic.
So we get a flow $\Theta^X\colon \mathcal{V}\rightarrow U$ for $X$
with $(\Theta^X)_0 = \Theta^{\tilde{X}}$ and $\mathcal{V}\subseteq \R\times U$
is the open sub supermanifold with base $\mathcal{V}_0.$ 
\end{proof}

By the same reasoning as in \cite[Lemma 2.2]{GW}
one can prove the existence of flow domains:
\begin{Lemma}
Let $X$ be an even real vector field on the mixed supermanifold $M.$
Then there exists a flow domain $\mathcal{V}$ for $X.$
Furthermore, if $\mathcal{V}_i$, $i\in \{1, 2\}$, are flow
domains with flows $\Theta^X_{i}$, then 
$\Theta^X_1 |_{\mathcal{V}_1\cap \mathcal{V}_2} = \Theta^X_2 |_{\mathcal{V}_1\cap \mathcal{V}_2}.$
\end{Lemma}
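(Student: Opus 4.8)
The plan is to follow the proof of \cite[Lemma 2.2]{GW}, transporting each step to the mixed super setting by means of the preceding two lemmas. I would first establish the compatibility (uniqueness) statement, since applied to the flows attached to two flow domains it yields the second assertion of the lemma. So let $\Theta_1\colon\mathcal{W}_1\to M$ and $\Theta_2\colon\mathcal{W}_2\to M$ be flows of $X$ on open subspaces $\mathcal{W}_i\subseteq\R^1\times M$ each containing $\{0\}\times M$ and with fibrewise connected base (as is the case for flow domains). Passing to bodies, both $(\Theta_i)_0$ are flows of the underlying even real vector field $\tilde X$ on the mixed manifold $M_0$; lifting $\tilde X$ to the unique real vector field on $M_0^{sm}$ and invoking classical uniqueness of integral curves on the smooth manifold $M_0^{sm}$ gives $(\Theta_1)_0=(\Theta_2)_0$ on $(\mathcal{W}_1)_0\cap(\mathcal{W}_2)_0$. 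Once the underlying maps coincide, I would argue order by order along the nilpotent filtration $\mathcal{J}_M^k$: both $\Theta_i^\sharp$ are obtained by solving the same hierarchy of linear ordinary differential equations with the same (identity) initial condition at $t=0$ — this is precisely the mechanism used to construct the flow in the previous lemma — so by uniqueness of solutions of linear ODEs the two sheaf morphisms agree on every connected component of the common base meeting $\{0\}\times M_0$, hence on all of $(\mathcal{W}_1)_0\cap(\mathcal{W}_2)_0$ since the fibres over points of $M_0$ are intervals containing $0$. This yields $\Theta_1=\Theta_2$ on $\mathcal{W}_1\cap\mathcal{W}_2$.

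For the existence of a flow domain I would glue local flows. The previous lemma, combined with local triviality of the structure sheaf, provides for every $m\in M_0$ an open neighbourhood $U_m\subseteq M$ and an $\epsilon_m>0$ together with a flow $\Theta_m\colon(-\epsilon_m,\epsilon_m)\times U_m\to M$ of $X|_{U_m}$; on overlaps these agree by the compatibility just established, applied to $X|_{U_m\cap U_{m'}}$ after shrinking the time interval. Hence the $\Theta_m$ patch to a morphism $\Theta^X$ on the open subspace $\mathcal{V}\subseteq\R^1\times M$ with base $\bigcup_m(-\epsilon_m,\epsilon_m)\times(U_m)_0$, which contains $\{0\}\times M_0$; after shrinking, each fibre $\mathcal{V}\cap(\R^1\times\{m\})$ is an interval, so $\mathcal{V}$ is a flow domain. (Taking, for each $m$, the union over \emph{all} flow domains of their fibres over $m$ produces the maximal flow domain, though only the existence of some flow domain is asserted here.) The additivity property $\Theta^X(t+s,-)=\Theta^X(t,\Theta^X(s,-))$, convenient for the patching and for producing the maximal domain, follows from (a), (b) and the compatibility statement exactly as in the classical case.

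The main obstacle is the compatibility step, and within it the passage from equality of the underlying maps to equality of the full sheaf morphisms: one must know that the higher-order coefficients are pinned down uniquely by linear ODEs with prescribed initial data and that this forces agreement on the whole (fibrewise connected) common domain rather than merely near $t=0$. Everything else — local existence, the patching, and the check that the glued object is a flow domain — is routine bookkeeping once compatibility is available.
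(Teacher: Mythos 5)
Your proposal is correct and follows essentially the same route as the paper, which simply invokes the argument of \cite[Lemma 2.2]{GW}: uniqueness via the underlying flow on $M_0$ together with the order-by-order linear ODE argument along the nilpotent filtration, and existence by patching the local flows furnished by the preceding lemma. The details you supply (the connectedness argument in the time variable and the gluing) are exactly the content of the cited classical reasoning transported to the mixed setting.
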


Putting everything together we obtain the final result.

\begin{Thm}
\label{thm: maximal flow}
Let $X$ be an even real vector field on the mixed supermanifold $M$
with underlying real vector field $\tilde{X}$ on $M_0.$
Then there exists a unique flow map $\Theta^X\colon \mathcal{V}\rightarrow M$
where $\mathcal{V}$ is the maximal flow domain for $X.$
Moreover, $(\Theta^X)_0$ is the maximal flow of $\tilde{X}.$
\end{Thm}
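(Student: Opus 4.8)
The plan is to assemble the local flows produced by the preceding lemmas into a single maximal one, using the compatibility of flows on overlapping flow domains.

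First I would invoke the existence lemma to know that flow domains for $X$ exist, together with the agreement statement that any two flows coincide on the intersection of their domains. Let $\mathcal{V}$ be the union of all flow domains for $X$. It is an open subspace of $\R^1\times M$ containing $\{0\}\times M$, and for each $m\in M_0$ the slice $\mathcal{V}\cap(\R^1\times\{m\})$ is a union of open intervals all containing $0$, hence itself an open interval. By the agreement statement the individual flows $\Theta^X_i$ patch to a single morphism $\Theta^X\colon\mathcal{V}\to M$, and the defining conditions $\partial_t\circ(\Theta^X)^\sharp=(\Theta^X)^\sharp\circ X$ and $\Theta^X|_{\{0\}\times M}=\id_M$ hold because they can be checked locally. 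Thus $\mathcal{V}$ is itself a flow domain, and by construction it contains every flow domain, so it is the maximal one. For uniqueness, any flow $\Theta$ defined on a flow domain $\mathcal{W}$ agrees with $\Theta^X$ on $\mathcal{W}\cap\mathcal{V}=\mathcal{W}$ by the agreement statement, hence is the restriction of $\Theta^X$, and no flow can live on a strictly larger domain since $\mathcal{V}$ already exhausts all flow domains.

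It remains to identify $(\Theta^X)_0$ with the maximal flow of $\tilde X$. The underlying morphism of $\Theta^X$ is clearly a flow for $\tilde X$ on the base $\mathcal{V}_0$ of $\mathcal{V}$. Conversely, over a coordinate chart $U\cong\A(V_\R)$ the local lemma builds the flow of $X|_U$ by solving, order by order in the nilpotent coordinates, linear ordinary differential equations whose unique solutions persist as long as the reduced flow $\Theta^{\tilde X}$ is defined; hence the base of a flow domain for $X|_U$ may be taken to be the full maximal flow domain of $\tilde X|_{U_0}$. Passing to the union over charts, $\mathcal{V}_0$ contains, and therefore equals, the maximal flow domain of $\tilde X$, and $(\Theta^X)_0$ is its maximal flow.

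The step I expect to be the main obstacle is this last one: showing that passing to maximal flow domains commutes with reduction, i.e.\ that the flow of $X$ does not cease to exist strictly before the reduced flow of $\tilde X$ does. Everything else is a routine gluing argument; it is precisely here that one genuinely uses the linear — rather than merely nonlinear — nature of the higher-order flow equations established in the local lemma, which guarantees that the nilpotent corrections to the flow exist on the whole interval on which the reduced flow is defined.
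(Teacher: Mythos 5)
The first half of your argument (taking the union of all flow domains, patching by the agreement lemma, checking the flow equations locally, and deducing maximality and uniqueness) is exactly the paper's one-line proof and is fine. The gap is in the ``moreover'' part. You argue that over a chart $U\cong\A(V_\R)$ the local lemma produces a flow of $X|_U$ whose base is the maximal flow domain of $\tilde X|_{U_0}$, and that ``passing to the union over charts'' shows $\mathcal{V}_0$ contains the maximal flow domain of $\tilde X$. This last inference fails: the union over charts of the maximal flow domains of the \emph{restricted} vector fields $\tilde X|_{U_0}$ is in general strictly smaller than the maximal flow domain of $\tilde X$ on $M_0$, because integral curves leave charts. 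Already for $M=M_0=\R$, $X=\partial_x$, and a cover by bounded intervals, every chart-local maximal domain only allows times $|t|$ bounded by the chart diameter, while the maximal flow domain of $\tilde X$ is all of $\R\times\R$. So your union does not contain points $(t,m)$ for which the reduced trajectory exits every single chart before time $t$, and the claimed inclusion is not established.

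The missing ingredient is the composition (additivity) property of flows, which is precisely what the reference to \cite{GW} supplies: given $(t_0,m)$ in the maximal flow domain of $\tilde X$, one covers the compact reduced trajectory $\Theta^{\tilde X}([0,t_0]\times\{m\})$ by finitely many charts, uses the local lemma to get a super flow for a uniform short time near each point of the trajectory, and then composes these local flows, $\Theta^X_{t_0}=\Theta^X_{t_k}\circ\cdots\circ\Theta^X_{t_1}$ with $\sum_i t_i=t_0$, using the agreement lemma to see that the composite is again a flow for $X$ on a flow domain containing $(t_0,m)$; equivalently, one runs an open-and-closed (no-loss-of-time) argument at the supremum of $\{t:(t,m)\in\mathcal{V}_0\}$ inside $\{t:(t,m)\in\mathrm{dom}\,\Theta^{\tilde X}\}$. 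The linearity of the higher-order equations, which you correctly identify, only guarantees that no time is lost \emph{within} a chart; the genuinely global step, and the one your proposal omits, is this patching in the time direction along the reduced trajectory.
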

\begin{proof}
This follows from the above considerations by taking the union of all flow domains.
\end{proof}

\begin{Def}
 An even real vector field is called complete if its maximal flow domain
$\mathcal{V}$ equals $\R\times M.$
\end{Def}

The following basic properties can be proved as in the classical case.
\begin{Prop}
\label{prop: commuting flow and vector fields}
Suppose $X$ is an even real vector field and $Y$ is an arbitrary vector field on $M.$
 \begin{itemize}  \item[(a)] $\mathfrak{L}_X Y := {\partial_t}|_{t = 0} (\Theta^X_{t})^\sharp \circ Y \circ (\Theta^X_{-t})^\sharp = [X, Y].$
  \item[(b)] If $[X, Y] = 0$, then ${\Theta^X}^{\sharp}$ and $Y$ commute.
 \end{itemize}
\end{Prop}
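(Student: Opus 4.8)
The plan is to follow the classical argument for the Lie derivative and for invariance under a commuting flow, the only extra care being the (trivial) parity bookkeeping and the differentiability in the time parameter, which is supplied by the ODE construction of the flow in the lemmas above.

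For part (a), I would write $\Phi_t$ for the pullback $(\Theta^X_t)^\sharp$ of the flow restricted to $\{t\}\times M$, defined on a flow domain. From Theorem~\ref{thm: maximal flow} and additivity of the flow one has $\Phi_{s+t}=\Phi_s\circ\Phi_t=\Phi_t\circ\Phi_s$ wherever both sides are defined, and the defining property $\partial_t\circ\Phi_t=\Phi_t\circ X$ together with $\Phi_0=\id$ gives $\partial_s|_{s=0}\Phi_s=X$ and, upon differentiating $\Phi_t\circ\Phi_{-t}=\id$, also $\partial_t|_{t=0}\Phi_{-t}=-X$. Since $X$ is even, each $\Phi_t$ is an even automorphism of the structure sheaf, so $Y_t:=\Phi_t\circ Y\circ\Phi_{-t}$ is again a homogeneous superderivation of the same parity as $Y$, and it depends differentiably on $t$ because in a chart $\A(V_\R)$ the pullbacks $\Phi_{\pm t}(q_j)$ of coordinate functions are built, order by order in the odd coordinates, from solutions of linear ordinary differential equations with smooth, partially holomorphic coefficients, exactly as in the construction of $\Theta^X$ above. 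The Leibniz rule in $t$ (sign-free, since $t$ is an even parameter) then yields
\[
\mathfrak{L}_X Y=\partial_t|_{t=0}Y_t=(\partial_t|_{t=0}\Phi_t)\circ Y+Y\circ(\partial_t|_{t=0}\Phi_{-t})=X\circ Y-Y\circ X=[X,Y],
\]
the last step because $X$ being even kills the Koszul sign in the bracket.

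For part (b), I would use the group law to compute, for arbitrary $t$ in the flow domain,
\[
\partial_t Y_t=\partial_s|_{s=0}\bigl(\Phi_t\circ(\Phi_s\circ Y\circ\Phi_{-s})\circ\Phi_{-t}\bigr)=\Phi_t\circ\mathfrak{L}_X Y\circ\Phi_{-t}=\Phi_t\circ[X,Y]\circ\Phi_{-t}.
\]
If $[X,Y]=0$ this vanishes identically, so $t\mapsto Y_t$ is constant on each fibre of the flow domain (an interval by definition); since $Y_0=Y$ this gives $\Phi_t\circ Y\circ\Phi_{-t}=Y$, i.e.\ $\Phi_t\circ Y=Y\circ\Phi_t$ for all $t$, which is the assertion that $\Theta^{X\sharp}$ and $Y$ commute. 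Passing from this local statement to all of $M$ uses the uniqueness in Theorem~\ref{thm: maximal flow}. I expect the only non-formal point to be the differentiability of $t\mapsto Y_t$ and the legitimacy of differentiating the composition in $t$; this is the main (minor) obstacle, and it is dispatched exactly as in the lemmas constructing flows, working in a coordinate chart and inducting on the order in the odd coordinates.
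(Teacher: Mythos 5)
Your proof is correct. The paper itself gives no argument for this proposition, only the citation to Bergner (Corollary 3.7); what you write out is precisely the standard flow-conjugation argument that this reference (and the classical case) uses: differentiate $Y_t=(\Theta^X_t)^\sharp\circ Y\circ(\Theta^X_{-t})^\sharp$ at $t=0$ for (a), then propagate via the group law $\Phi_{t+s}=\Phi_t\circ\Phi_s$ and connectedness of the $t$-intervals of the flow domain for (b), with the parity bookkeeping trivial because $X$ is even. Your appeal to the order-by-order ODE construction of the flow for smoothness in $t$ is exactly the right justification for the differentiations, so there is no gap -- your write-up is simply a self-contained version of the proof the paper outsources.
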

\begin{proof}
See for instance \cite[Corollary 3.7]{Bergner}.
\end{proof}

\newpage
   
\end{document}